\newcommand{\eps}{\varepsilon}
\newtheorem{proposition}{Proposition}
\newtheorem{theorem}[proposition]{Theorem}
\newtheorem{lemma}[proposition]{Lemma}
\theoremstyle{remark}
\newtheorem{remark}[proposition]{Remark}
\theoremstyle{definition}
\numberwithin{equation}{section}
\numberwithin{proposition}{section}
\newcommand{\R}{\mathbb{R}}
\newcommand{\pa}{\partial}
\title[2D one-phase Muskat problem with contact points]{
Global-in-time estimates for the 2D one-phase Muskat problem with contact points
}
\author{Edoardo Bocchi, Ángel Castro and Francisco Gancedo}
	\address[Edoardo Bocchi]{Dipartimento di Matematica, Politecnico di Milano, Piazza Leonardo da
Vinci 32, 20133 Milano, Italy}
	\email{edoardo.bocchi@polimi.it}
    \address[Ángel Castro]{ICMAT-CSIC, C/ Nicol\'as Cabrera 13-15, 28049 Madrid, Spain}
	\email{angel\_castro@icmat.es}
\address[Francisco Gancedo]{Departamento de Análisis Matemático $\&$ IMUS, Universidad de Sevilla,
C/ Tarfia s/n, 41012 Sevilla, Spain\newline
and School of Mathematics, Institute for Advanced Study, 1 Einstein Dr., Princeton, NJ 08540, USA}
	\email{fgancedo@us.es}
\begin{document}

	\begin{abstract}
        In this paper, we study the dynamics of a two-dimensional viscous fluid evolving through a porous medium or a Hele-Shaw cell, driven by gravity and surface tension. A key feature of this study is that the fluid is confined within a vessel with vertical walls and below a dry region. Consequently, the dynamics of the contact points between the vessel, the fluid and the dry region are inherently coupled with the surface evolution. A similar contact scenario was recently analyzed for more regular viscous flows, modeled by the Stokes \cite{GuoTice2018} and Navier-Stokes \cite{GuoTice2024} equations. Here, we adopt the same framework
        but use the more singular Darcy's law for modeling the flow. We prove global-in-time a priori estimates for solutions initially close to equilibrium.
Taking advantage of the Neumann problem solved by the velocity potential, the analysis is carried out in non-weighted $L^2$-based Sobolev spaces and without imposing restrictions on the contact angles.
	\end{abstract}

	\maketitle
	\tableofcontents

	\section{Introduction}

This paper deals with the evolution of contact point dynamics between a solid, a fluid, and a dry region in incompressible flows. The scenario considers fluids in porous media or Hele-Shaw cells, whose evolution equations are explained below.

A two-dimensional incompressible flow
\begin{equation}\label{incompre}
\nabla\cdot u(t,x,y)=0,\quad t\geq 0,\quad (x,y)\in\R^2,
\end{equation}
confined in a porous medium is modeled by the classical Darcy's law \cite{Darcy1856}, given by
\begin{equation}\label{Darcy}
\frac{\mu}{\nu}u(t,x,y)=-\nabla p(t,x,y)-g\rho(0,1).
\end{equation}
Above, $\mu$ is the viscosity of the fluid, $\nu$ is the permeability of the homogeneous medium and $u$ the velocity of the flow. This momentum equation includes the gradient of the fluid pressure $p$ and the effect of gravity, with $g$ being its constant and $\rho$ the density. The fluid bulk is contained within the moving domain
\begin{equation}
\Omega(t)=\left\{(x,y)\in \mathbb{R}^2 \ | \ x\in\mathcal{I}=(-1,1), \ h_w(x)<y< h(t,x) \right\},
\end{equation}
with boundary
$\partial\Omega(t) = \overline{\Gamma}(t)\cup\overline{\Gamma}_w(t)$, which is divided into two parts. The first part corresponds to the fluid's moving boundary, given by \begin{equation}			\Gamma(t)=\left\{(x, h(t,x))\ |\ x\in \mathcal{I} \right\},
\end{equation}
while the second part corresponds to the rigid boundary, a wall, given by
\begin{equation}		\Gamma_w(t)=\left\{(\pm 1, y) \ | \  h_w(\pm 1)\leq  y \leq h(t,\pm1) \right\}\cup\left\{(x, h_w(x)) \ |  \ x\in \mathcal{I} \right\}, 
\end{equation}
The stated configuration
is for a fluid filtered inside a vessel with a smooth boundary. The contact points where the fluid, the vessel, and the dry region meet occur along the vertical lateral walls, as shown in Figure \ref{fig-vessel}.
\begin{figure}
    \centering
 \includegraphics[scale=1]{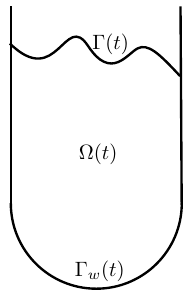}
    \caption{Configuration of the vessel.}
    \label{fig-vessel}
\end{figure}

We assume that the lower part of the vessel boundary is a curve that smoothly connects to the vertical walls, that is,  $h_w \in C^\infty(\mathcal{I})$ and $\lim_{x\rightarrow \pm 1} h'_w(x) = \pm \infty$. Within this geometrical configuration, the fluid domain exhibits corners only at the contact points
        \begin{equation*}
            \overline{\Gamma}(t)\cap \overline{\Gamma}_w(t)= \left\{(-1,h(t,-1)),(1,h(t,1 ))\right\}.
        \end{equation*}The regime investigated here is stable, as the fluid lies below a dry region or another fluid with negligible viscosity.

In this scenario, the main interest is the  evolution of the contact points between the fluid, the vessel and the dry region. Because of that, it is of crucial importance the effect of capillarity in the model. For the interaction among the fluid and the dry region, Laplace-Young condition is given as follows
\begin{equation}\label{L-Y}
p=-\sigma\kappa_h\quad\mbox{on}\quad\Gamma(t).
\end{equation}
Above, $\sigma>0$ is the surface tension coefficient and $\kappa_h$ the moving surface curvature given by
\begin{equation}
\label{curvature}
\kappa_h(t,x)=\frac{h''(t,x)}{(1+(h'(t,x))^2)^{3/2}}=\Big(\frac{h'(t,x)}{\sqrt{1+(h'(t,x))^2}}\Big)',
\end{equation}
with prime spatial derivative. The contact points' evolution are also affected by Laplace-Young condition as follows
\begin{equation}
\label{F-evo-contact}
\partial_t h (t,\pm1)= F\Big( \frac{\llbracket\gamma\rrbracket }{\sigma}\mp  \frac{h'}{\sqrt{1 + (h')^2}}\Big)(t,\pm 1),
\end{equation}
where $F$ is a  given injective function and $\llbracket \gamma\rrbracket$ is a physical constant due to the three-phase contact points. See Section 1.1 below for more details on the dynamics of contact points. Finally, the system of equations is closed by giving the kinematic boundary condition
\begin{equation}\label{evolintro}
\partial_t h(t,x)=u(t,x,h(t,x))\cdot(-h'(t,x),1),\quad x\in\mathcal{I},
\end{equation}
together with non-penetration condition on the vessel
\begin{equation}\label{nonslip}
u\cdot n=0\quad\mbox{on}\quad\Gamma_{w}(t),
\end{equation}
with $n$ the outward normal vector to the vessel boundary.

We refer to this physical scenario as the Muskat problem \cite{Muskat34} with contact points. Remarkably, it is mathematically analogous to the evolution of a fluid in a Hele-Shaw cell \cite{SaffmanTaylor1958}. In it, two parallel plates are close enough together so that the fluid contained has a two-dimensional evolution. In particular, the dynamical equation that models this problem is equivalent to Darcy's law by relating the permeability constant with the distance between the plates. In the setting established in this paper, the vessel is the Hele-Shaw cell, considering the interaction with the fluid and the contact points.

\subsection{Contact points} \label{subsec-dyncont}
It is well-known \cites{Young1805,Laplace,Gauss1829 } that in equilibrium configurations the angle at the contact points is determined by the different surface tension coefficients between the three phases fluid-solid-dry. More precisely, denoting by $\gamma_{\rm sd}$ the solid-dry surface tension and by $\gamma_{\rm sf}$ the solid-fluid surface tension, the equilibrium contact angle $\omega_{\rm eq}$ (see Figure \ref{fig-statvessel}) verifies Young's law
\begin{equation*}
    \cos(\omega_{\rm eq})= \frac{\gamma_{\rm sd} - \gamma_{\rm sf}}{\sigma} = \frac{\llbracket \gamma \rrbracket}{\sigma}.
\end{equation*}
\begin{figure}
    \centering
\includegraphics[scale=1]{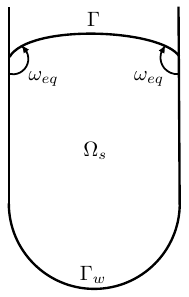}
    \caption{A possible equilibrium configuration.}
    \label{fig-statvessel}
\end{figure}
In this paper we focus in the \emph{partial wetting} regime, that is, when the surface does not touch tangentially the walls but forms real angles. Thus, $$\omega_{\rm eq}\in (0, \pi) \qquad \text{or, equivalently,} \qquad \frac{\llbracket \gamma \rrbracket}{\sigma}\in (-1,1).
$$
On the other hand, regarding dynamical configurations, experiments and simulations have shown that the normal velocity of the contact point $V_{\rm cp}$ is related to the deviation of the dynamical angle $\omega_t$ from the equilibrium angle, that is,
\begin{equation*}
V_{\rm cp} = F \left(\cos(\omega_{\rm eq}) - \cos(\omega_t)\right).
\end{equation*} This type of relation has been derived in several studies \cites{Blake1969,Cox1986,RenWeinan07, RenWeinan11} using thermodynamical, molecular and  hydrodynamical arguments. All of them brought to the same general form of $F$, which is an increasing function of its argument such that $F(0)=0$. For simplicity, we choose here $F(s)= \sigma s$ and, writing the previous relation in our setting, we derive the evolution equation for the contact points \begin{equation}\label{dyn-law}
			\partial_t h (t,\pm1) = \llbracket \gamma\rrbracket \mp \sigma \frac{h'}{\sqrt{1 + (h')^2}}(t, \pm 1).
		\end{equation}
We remark that this additional evolution equation is due to capillarity, while in the pure gravity-driven case the dynamics of the contact points is directly determined by the kinematic condition \eqref{evolintro}. See also \cites{GuoTice2018,GuoTice2024} for mathematical discussion and Section \ref{sec-prevres} below.

\subsection{Main results}

In this paper we prove a priori global-in-time estimates for the one-phase Muskat system \eqref{incompre}-\eqref{dyn-law} for initial data closed enough to the stationary state with exponential convergence. See Proposition \ref{sta-exiuni} below for details of the stationary state. Our result allows the contact points with any angle determined by the stationary state.

In this setting, we disclose a synthesize version of the main result for the sake of exposition. In Theorem \ref{mainTh} it is possible to find a detailed version, once the appropriate formulation of the problem and the detailed functional spaces are explained in detail below.

\begin{theorem}\label{ThIntro}
Let $h(t,x)=h_s(x)+\eta(t,x)$ be the surface profile of a smooth solution to \eqref{incompre}-\eqref{dyn-law}, where $h_s$ is the surface profile of its stationary solution. Assume that $h$ and $h_s$ have the same mean. Then, for any $T>0$, the following bound holds:
$$
\sup_{t\in[0,T]}\mathcal{E}(t)+\int_0^T\mathcal{D}(t)dt\leq C\mathcal{E}(0),
$$
provided that $\mathcal{E}(0)$ is small enough.
Above, the energy term $\mathcal{E}(t)$ is given by
$$
\mathcal{E}(t)=\mathcal{E}_\parallel(t) +\|\eta(t)\|^2_{H^{3/2+\delta}(\mathcal{I})}+\|\partial_t\eta(t)\|^2_{H^{3/2+\delta}(\mathcal{I})}\mbox{ with }\, \mathcal{E}_\parallel(t) =\sum_{j=0}^2\|\partial_t^j\eta(t)\|^2_{H^1(\mathcal{I})},
$$
for any $0<\delta<1/2 $, and the dissipation term $\mathcal{D}(t)$ in \eqref{tot-diss} includes
\begin{equation*}
\sum_{j=0}^2\left((\partial_t^{j+1}\eta)^2(t,-1)+(\partial_t^{j+1}\eta)^2(t,1)\right)+
\sum_{j=0}^1\|\partial_t^j\eta(t)\|^2_{H^{5/2}(\mathcal{I})} + \|\partial_t^2\eta(t)\|^2_{H^{3/2+\delta}(\mathcal{I})}.\end{equation*} In addition, there exists $\lambda>0$ such that the following decay estimate holds:
\begin{equation*}
\sup_{t\in[0,T]}\Big(\mathcal{E}_\parallel(t) +\int_{\Omega(t)} |u(t)|^2  +(\partial_t \eta)^2(t,-1) + (\partial_t \eta)^2(t,1)\Big)\leq C\mathcal{E}_\parallel(0)e^{-\lambda T}.
\end{equation*}
\end{theorem}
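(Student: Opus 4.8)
The plan is to run a nonlinear energy method on the perturbation $\eta = h - h_s$, closing a differential inequality for $\mathcal{E}(t)$ whose dissipation $\mathcal{D}(t)$ controls the energy from above, and then upgrading to exponential decay for the lower-order quantity $\mathcal{E}_\parallel(t)$ via a coercivity/Poincar\'e-type argument. First I would reformulate the free-boundary problem as a system for $\eta$ alone: since the velocity $u$ solves a Neumann problem for the potential (Darcy's law makes $u = -\nabla p - g\rho(0,1)$ and $\nabla\cdot u = 0$, so $p$ is harmonic up to the gravity term, with Neumann data on $\Gamma_w$ from \eqref{nonslip} and Dirichlet data $p = -\sigma\kappa_h$ on $\Gamma(t)$ from \eqref{L-Y}), the Dirichlet-to-Neumann-type operator associated to this mixed boundary value problem expresses $\partial_t\eta$ on $\mathcal{I}$ in terms of $\eta$ through \eqref{evolintro}; the contact-point evolution \eqref{dyn-law} supplies the boundary conditions at $x = \pm 1$ that close the elliptic problem. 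The key structural point, already flagged in the abstract, is that working with the pressure/potential formulation keeps everything in \emph{non-weighted} $L^2$-Sobolev spaces and lets the contact angle be arbitrary in $(0,\pi)$.

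Next, differentiating the equation in time up to two times (to reach the $\mathcal{E}_\parallel$ level) and testing against the natural multipliers, I would derive the basic energy identity. The gravity and surface-tension terms produce the coercive energy: surface tension contributes $\|\partial_t^j\eta\|_{H^1}^2$-type control through the linearization of the curvature $\kappa_h$ (note \eqref{curvature} is a perfect derivative, which is what makes the boundary terms at $x = \pm 1$ combine cleanly with \eqref{dyn-law}), and the boundary contributions at the contact points generate exactly the pointwise terms $(\partial_t^{j+1}\eta)^2(t,\pm 1)$ appearing in $\mathcal{D}(t)$ — this is the place where the dynamic contact-angle law is used as a dissipative boundary condition rather than a constraint. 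The dissipation $\|\partial_t^j\eta\|_{H^{5/2}}^2$ comes from the gain of one-half derivative in the elliptic regularity for the mixed problem composed with the half-order-smoothing of the Neumann operator, i.e. from the parabolic scaling $\partial_t \sim |\partial_x|^{3}$ inherited from Darcy plus surface tension. For the top-order pieces $\|\eta\|_{H^{3/2+\delta}}^2$ and $\|\partial_t\eta\|_{H^{3/2}}^2$ I would use a commutator/paralinearization argument so that the nonlinear terms are genuinely lower order and absorbable by $\mathcal{D}(t)$ once $\mathcal{E}(0)$ (hence $\mathcal{E}(t)$, bootstrapped) is small.

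The quantitative steps I anticipate are: (i) an elliptic estimate for the mixed Dirichlet--Neumann problem on the corner domain $\Omega(t)$ showing no loss at the contact points in the relevant norm range — this is where the corner geometry could a priori obstruct things, and it is handled by the partial-wetting hypothesis $\llbracket\gamma\rrbracket/\sigma \in (-1,1)$ which keeps the corner angle away from the degenerate values; (ii) a coercivity lemma stating $\mathcal{E}_\parallel(t) + \int_{\Omega(t)}|u|^2 + (\partial_t\eta)^2(t,\pm1) \lesssim \mathcal{D}(t)$ modulo the mean-zero normalization (this is the spectral-gap input and uses the equal-mean assumption to kill the neutral translation mode), which converts the energy inequality $\frac{d}{dt}\mathcal{E}_\parallel + \mathcal{D} \le 0$ into $\frac{d}{dt}\mathcal{E}_\parallel + \lambda\mathcal{E}_\parallel \le 0$ and hence Gr\"onwall gives the stated $e^{-\lambda t}$ decay; (iii) a nonlinear closure showing all error terms are $O(\mathcal{E}^{1/2})\mathcal{D}$, so that for $\mathcal{E}(0)$ small the a priori bound $\mathcal{E}(t) + \int_0^t\mathcal{D} \le C\mathcal{E}(0)$ propagates.

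The main obstacle I expect is step (i) together with the interplay between the two required regularity levels: the contact-point ODE \eqref{dyn-law} only sees $h'$ pointwise, which is half a derivative \emph{below} the trace of $H^{5/2}$ that the bulk dissipation naturally controls, so one must carefully track the boundary regularity budget so that the contact-angle terms close without a loss — this is precisely why the energy is designed with the extra $\delta$-room at the $H^{3/2+\delta}$ level and with $\partial_t\eta$ measured in $H^{3/2}$ rather than $H^1$. Once the linear mixed-problem estimate with sharp contact-point behavior is in hand, the rest is a (lengthy but) standard energy--dissipation bookkeeping.
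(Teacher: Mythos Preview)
Your overall architecture is right, but step (i) contains a genuine gap that would break the angle-unrestricted claim. You propose to gain regularity on the potential via elliptic estimates for the \emph{mixed} Dirichlet--Neumann problem (Dirichlet on $\Gamma$, Neumann on $\Gamma_w$). On a corner domain the pencil eigenvalues for that problem are $(2m+1)\pi/(2\omega)$, so singular solutions in $H^1\setminus H^2$ appear as soon as $\omega\ge\pi/2$; the partial-wetting hypothesis only guarantees $\omega\in(0,\pi)$, not $\omega<\pi/2$, so your step (i) would force an acute-angle restriction. The paper avoids this by \emph{not} using the mixed formulation for the elliptic gain: it rewrites the system so that $\Phi$ solves a pure \emph{Neumann} problem (treat $\partial_t\eta$ as Neumann data on $\Gamma$, keep Neumann on $\Gamma_w$), for which the pencil eigenvalues are $m\pi/\omega$ and no singular part arises for any $\omega\in(0,\pi)$. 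This yields $\Phi,\partial_t\Phi\in H^2(\Omega_s)$ without weights. The $H^{5/2}$ control on $\partial_t^j\eta$ then comes not from bulk parabolic smoothing but from the separate \emph{one-dimensional} elliptic problem on $\mathcal{I}$ furnished by the Dirichlet identity $\Phi|_\Gamma=-g\eta+\sigma\kappa_\eta$ together with the contact-point law as Neumann data at $x=\pm1$.

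A second point you pass over is the control of $\|\partial_t^j\Phi\|_{L^2(\Omega_s)}$, which you will need (e.g.\ to use trace and $L^4$ embeddings in the nonlinear estimates). Because the potential satisfies only Neumann conditions on $\partial\Omega_s$, there is no Poincar\'e inequality available; the zero-mean assumption concerns $\eta$, not $\Phi$. The paper handles this with a homogeneous trace inequality $\|\Phi|_\Gamma\|_{\dot H^{1/2}(\Gamma)}\lesssim\|\nabla\Phi\|_{L^2(\Omega_s)}$, then uses the Dirichlet relation and the contact-point evolution to bound the mean of $\Phi|_\Gamma$ by $|\partial_t\eta(t,\pm1)|$, and finally recovers $\|\Phi\|_{L^2}$ from $\|\nabla\Phi\|_{L^2}$ plus these boundary terms. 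This is why the contact-point dissipation $(\partial_t^{j+1}\eta)^2(t,\pm1)$ is not merely decorative but structurally necessary to close the $L^2$ budget for the potential.
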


\begin{remark}
    The solution $h(t,x)$ to \eqref{incompre}-\eqref{dyn-law} with the regularity given in the theorem above has to satisfy
the fundamental energy-dissipation equality
\begin{equation*}
\begin{aligned}
\frac{d}{dt} \bigg[\int_\mathcal{I} &\Big(\frac{g}{2} h^2(t,x)+\sigma\sqrt{1+(h')^2(t,x)}\Big) dx -\llbracket \gamma \rrbracket \left( h(t,-1) +  h(t,1)\right) \bigg] \\
&+\int_{\Omega(t)} |u(t)|^2 +(\partial_t h)^2(t,-1) + (\partial_t h)^2(t,1)=0.\end{aligned}\end{equation*}
\end{remark}
In a future paper we expect to prove local-in-time well-posedness of the system for initial data such that $\mathcal{E}(0)$ is small.
Thus, global well-posedness and stability for the contact scenario would follow.
\subsection{Previous results and main novelties}\label{sec-prevres}
The Muskat problem and the evolution of fluids in Hele-Shaw cells are classical problems in fluid dynamics \cites{Muskat34,SaffmanTaylor1958}. Originally considered to study the evolution of oil and water in oil recovery, it deals with the interface dynamics among the two immiscible fluids. It models the interaction among two fluids (two-phase case) and also the surface evolution of one fluid with a dry region (one-phase case) in a porous medium or a Hele-Shaw cell. It is possible to find a large amount of recent literature with fundamental results \cite{Sema2017} as these interface evolution problems have interesting dynamics behaviors.

Considering no surface tension and no contact points, the system is locally-in-time well-posed when the Rayleigh-Taylor condition is satisfied \cites{CordobaGancedo2007,CCG2011}. It holds due to gravity when the denser fluid is below the least dense, and the more viscous fluid pushing the less viscous fluid. Sharp local-in-time well-posedness results in subcritical spaces can be found in \cites{CGSV2017,Matioc2019,AlazardLazard2019,NguyenPausader2019} and in critical spaces in \cites{AlazardNguyen02-2021,AlazardNguyen2023,DGN2023}. See also \cites{APW2023,G-JG-SHP2024} for the evolution of cusps. For small initial data, the lineal parabolic regime is stronger than the nonlinear terms, resulting in global-in-time well-posedness of the system \cites{CCGR-PS2016,Cameron2019,GG-JPS2019,NguyenMuskat2022,CordobaLazar2018,AlazardNguyen2023}. On the other hand, large initial data provide very different dynamics in the two-phase and one-phase scenarios. In the two-phase case, initial data given by graphs develop singularities in finite time with over turning profiles \cites{CCFGL-F2012} having loss of regularity \cites{CCFG2013}. In the one-phase case, large initial graphs do not turn \cites{Kim2003,AlazardOneFluid2019} and exist globally-in-time \cites{DGN2023,AlazardKoch2023}.
But it is possible to have finite-time particle collision on smooth interfaces for non-graph initial data \cites{CCFG2016}. On the other hand, finite-time particle collision on smooth interfaces is not possible in the two-phase case \cites{GancedoStrain2014}. Considering impermeable boundaries, global-in-time regularity holds for small data \cites{Granero-Belinchon2014} as well as finite-time blow-up \cite{ZlatosII2024} in similar scenario explored before for several quasi-geostrophic temperature front models \cites{KRYZ2016,GancedoPatel2021}. If the Rayleigh-Taylor condition is not satisfied, the contour evolution problem is ill-posed \cites{CordobaGancedo2007,GG-JPS2019}. However, weak solutions exist developing a mixing zone \cites{CCF2021,Mengual2022}.

Adding capillarity to the model, the system is locally-in-time well-posed with or without satisfying the Rayleigh-Taylor condition initially \cites{DuchonRobert1984,Chen1993,EscherSimonett1997} in subcritical spaces \cite{HQNguyen2019}. This is due to the fact that surface tension adds a higher-order nonlinear parabolic term to the system. However, initial data with lack of Rayleigh-Taylor condition produce instabilities related with fingering \cites{Otto1997,GHS2007,EscherMatioc2011}. Without gravity, there exist close to circle global-in-time solutions \cites{ConstantinPugh1993,Chen1993,Tanveer2011}. Gravity unstable solutions can be stabilized globally in time with surface tension for near flat solution \cite{GG-BS2020} and near circle moving bubbles \cites{GG-JPS2023,GG-JPS2024}. Nonetheless, initial stable solutions converge to no capillarity solutions as surface tension coefficient goes to zero \cite{Ambrose2014} in subcritical spaces \cite{FlynnNguyen2021}. In both cases, gravity stable and unstable solutions, with Lipschitz initial data, gain regularity \cite{Chen2024}. Arbitrarily large Lipschitz data with a small critical Sobolev norm have recently been proven to provide global-in-time solutions for stable two-phase cases \cite{Lazar2024}. With impermeable boundaries, there have been an intense study of thin-film models and lubrication approximations together with the convergence to the original Muskat solutions \cites{BertozziPugh96,laurencot2017self,CENV2018,BG-B19} even for gravity unstable situations \cite{BocchiGancedo2024}.

The surface tension case with constant points was first considered for the Muskat problem in \cites{BazaliyFriedman01,BazaliyFriedman02} for scenarios without gravity force. The fluid spreads over a flat impermeable fixed boundary, where well-posedness is obtained for initial configurations with no moving contact points. Further developments were given in \cites{KnupferMasmoudi2013,KnupferMasmoudi2015} for the same scenario, allowing movement of contact points and having a fixed small contact angle of size $\varepsilon$. The authors proved global-in-time well-posedness, obtaining uniform estimates in $\varepsilon$, showing convergence to a family of thin-film approximation equations.

The scenario considered in this paper allows the dynamics of the contact points and angles of any size, determined by the stationary state, modeled as in Section \ref{subsec-dyncont} above. A similar contact points study has recently been developed to model the evolution of a viscous fluid in a vessel using the 2D Stokes equations \cites{GuoTice2018,ZhengTice2017} and the 2D Navier-Stokes equations \cites{GuoTice2024,GTWZ2024}. The authors first developed global-in-time a priori estimates for the models \cites{GuoTice2018,GuoTice2024} and later the local-in-time well-posedness theory \cites{ZhengTice2017,GTWZ2024}
to obtain the stability of the contact points problem.

In this paper, we obtain global-in-time a priori estimates for the Muskat problem with contact points, and therefore deal with the more singular Darcy's law to model the flow. In this setting, we close a scheme of a priori estimates, obtaining a bootstrap argument from the energy-dissipation control of the time derivatives to higher spatial regularity via elliptic estimates for the system. The dissipation is obtained through the potential formulation of the system, taking $u=\nabla\phi$, as shown in Section \ref{subsec-pot} below. In this scenario, the zero Neumann condition (non-penetration) on the fixed boundary for the potential appears inherently, allowing the movement of the contact point. This is a big difference with the Stokes and Navier-Stokes equation to model the flow, where a zero Dirichlet (no-slip) condition on the fixed boundary for the velocity does not allow the movement of the contact points, and therefore in \cites{GuoTice2018,GuoTice2024} a Navier-slip boundary condition is used. In \cite{GuoTice2018}, the authors derived global-in-time a priori estimates without imposing restrictions on the contact angle $\omega\in (0,\pi)$. However, due to the eigenvalues of the pencil operator associated with an elliptic problem they studied, they are forced to work with weighted $L^2$-based Sobolev spaces with weight exponent $\delta$ satisfying
$$\max \left(0, 2-\frac{\pi}{\omega}\right)<\delta<1,$$
in order to gain regularity through elliptic estimates. In \cite{GuoTice2024}, the authors managed to kickstart the elliptic gain switching from weighted $L^2$-based spaces to non-weighted $L^q$-based spaces.
In this paper, we perform a priori energy estimates for a Darcy flow considering the same geometry. An important difference, with respect to these two works, is that the analysis is carried out in non-weighted Hilbert spaces, while, at the same time, we allow contact angles $\omega\in (0,\pi)$. This is done by taking advantage of the Neumann problem solved by the velocity potential and the spectral properties of the associated pencil operator around the corners. We are then able to bootstrap from the appropriate energy-dissipation control of the time derivatives to higher spatial regularity via elliptic estimates.

The dynamics of contact points has also been recently studied for the water waves problem, that is, for free-surface gravity-driven incompressible Euler flows. A priori estimates were shown for any dimension in \cite{Poyferre2019} for angles smaller than a dimensional constant, preventing singularities in the elliptic equations providing the model. Adding surface tension to the model, a priori estimates were shown for angles less than $\pi/6$ \cite{MingWang2020} and local-in-time well-posedness for angles less than $\pi/16$ \cite{MingWang2021}. The authors have recently improved the results to acute angles \cite{MingWang2024}.\\
For the floating body problem in water waves, the dynamics of contact points was first discussed in \cite{Lannes2017}. Recently, its local well-posedness for a fixed object
was established in \cite{LanMin24}. In particular, we emphasize a trace theorem for the homogeneous Sobolev space $\dot{H}^1$ developed there that
we use in our approach, see Section \ref{subsec-L2pot}. It allows us to get control of the mean of the potential and therefore of its $L^2$-norm by the dissipation, as Poincaré-type inequalities can not be used directly due to lack of zero Dirichlet boundary conditions. This represents another main difference with respect to \cites{GuoTice2018, GuoTice2024}, where the Navier-slip boundary condition used in the model permits to directly have such a control. In shallow water asymptotic models, considering partially immersed objects with vertical walls \cites{Bocchi2020, Bocchi2020-1, BeckLannes2022, BocchiHeVergara2023, IguchiLannes2025} results in fixed contact points or lines. In contrast, for boat-shaped geometries, contact dynamics was studied in \cite{IguchiLannes2021} and more recently in \cite{IguchiLannes2026}.

\subsection{General notation} We denote by $\mathcal{I}$ the interval $(-1,1)$.
We write $f'$ for the $x$-derivative of functions $f(x)$ or $f(t,x)$ that spatially depend only on the variable $x\in\mathcal{I}$, while we write $\partial_x$ or $\partial_y$ for functions that spatially depend on $(x,y)\in\mathbb{R}^2$.
We denote by $N_f$ the outward normal vector to the one-dimensional surface parameterized by the function $f$, while we denote by $n$ a generic unit outward normal vector. $B_r(x_0)$ and $B_r(x_0, y_0)$ denote the one-dimensional and two-dimensional ball with radius $r>0$ centered in $x_0\in\mathbb{R}$ and $(x_0, y_0)\in \mathbb{R}^2$, respectively.\\
\emph{Constants:} throughout the paper, $C > 0$  denotes a generic constant that can depend on the parameters of the problem; when necessary, we comment on the dependence of these constants.\\
\emph{Spaces, norms and traces:} we denote by $L^p$ the usual Lebesgue space for $1\leq p\leq \infty$, by $H^s$ the usual $L^2$-based Sobolev space of order $s\in \mathbb{R}$ and by $\|\cdot\|_{L^p}$, $\|\cdot\|_{H^s}$ their respective norms. We denote by $\dot{H}^{s}$ the homogeneous $L^2$-based Sobolev space of order $s>0$ and by $\|\cdot\|_{\dot{H}^s}$ its semi-norm. We denote by $\mathring{H}^2$ the quotient space $H^2/\mathbb{R}$ and by  $\|\cdot\|_{\mathring{H}^2}$ its norm $\|\nabla \cdot \|_{H^1}$.
For $\alpha\in \mathbb{R}$, we use the compact notation $H^{\alpha+}$ that stands for $H^{\alpha+\delta}$ for any $0<\delta<1/2$. For the sake of readability, we write functions defined on domains directly within boundary integrals and boundary norms, rather than using their corresponding traces.

\subsection{Outline of the paper} The rest of the paper is devoted to providing the proof of Theorem \ref{ThIntro}. Section \ref{sec-pot&fix} introduces the potential and fixed-boundary formulations used throughout the paper. In Section \ref{sec-ED}, we show the basic a priori energy-dissipation equalities satisfied by the solution. Section \ref{sec-enediss} establishes different energies and dissipation terms of various orders needed to obtain the a priori estimates. In Section \ref{sec-adddiss}, we address additional dissipation and a trace theorem required to obtain Poincaré-type inequalities. Section \ref{sec-ellest} details results on elliptic estimates used to bootstrap regularity. In Section \ref{sec-NL}, the inequalities developed in Sections \ref{sec-adddiss} and \ref{sec-ellest} are used to handle the nonlinear terms of the system. Gathering all previous estimates, we conclude the proof of the main result in Section \ref{sec-mainres}. For completeness, technical estimates are provided in Appendix \ref{appendix}.

\section{Potential and fixed-boundary formulations}  \label{sec-pot&fix}

\subsection{Potential formulation}\label{subsec-pot}
		Without any loss of generality, let us set $\mu=\nu=\rho=1$.
		Using Darcy's law \eqref{Darcy}, the fluid velocity can be written as the gradient of the potential $\phi=-p- g y$. Writing \eqref{evolintro} in terms of $\phi$ yields
		\begin{equation}\label{evo-eq}
			\partial_t h = \nabla \phi \cdot N_h \quad  \mbox{on}\quad \Gamma(t),
		\end{equation}where  $N_h(t,x)=(-h'(t,x),1)$ is the outward normal vector to $\Gamma(t)$,  coupled with the Dirichlet-Neumann elliptic problem
		\begin{equation}\label{DNpb}
			\begin{aligned}
				\Delta \phi=0\quad & \mbox{in}\quad \Omega(t),\\[5pt]
				\phi=-gh +\sigma \kappa_h\quad & \mbox{on}\quad \Gamma(t),\\[5pt]
				\nabla \phi\cdot n=0\quad & \mbox{on}\quad \Gamma_w(t),\\[5pt]
			\end{aligned}
		\end{equation}and the evolution of the contact points
        \begin{equation}\label{evo-contact}
			\partial_t h (t,\pm1) = \llbracket \gamma\rrbracket \mp \sigma \frac{h'}{\sqrt{1 + (h')^2}}(t, \pm 1).
		\end{equation}
		We will refer to Dirichlet-Neumann elliptic problems also as ``mixed" elliptic problems. Introducing the well-known Dirichlet-to-Neumann operator $\mathrm{DN}$ (see \cite{Lannes2013, MingWang2017}), which maps the Dirichlet datum in \eqref{DNpb} to the normal derivative of the solution to \eqref{DNpb} at $\Gamma(t)$, \eqref{evo-eq}-\eqref{DNpb} can be also written as
		\begin{equation*}
			\partial_t h =\mathrm{DN}(-gh + \sigma \kappa)  \quad  \mbox{on}\quad \Gamma(t).
		\end{equation*}The same problem can be described in a different way. Indeed, \eqref{evo-eq} can be understood as a Neumann boundary condition for $\phi$ if we think about $\partial_t h$ as a prescribed quantity. Then, \eqref{evo-eq}-\eqref{DNpb} can be reformulated as
		\begin{equation*}
			-gh +\sigma\kappa_h =\phi \quad \mbox{on} \quad \Gamma(t)
		\end{equation*}
		coupled with the Neumann-Neumann elliptic problem
		\begin{equation}\label{NNpb}
			\begin{aligned}
				\Delta \phi=0\quad & \mbox{in}\quad \Omega(t),\\[5pt]
				\nabla \phi \cdot N_h=\partial_t h \quad & \mbox{on}\quad \Gamma(t),\\[5pt]
				\nabla \phi\cdot n=0\quad & \mbox{on}\quad \Gamma_w(t).\\[5pt]
			\end{aligned}
		\end{equation}We will see in Section \ref{sec-ellest} that the gain of higher spatial regularity for the potential, necessary for the closure of the scheme of a priori estimates, strongly relies on the structure of the Neumann problem \eqref{NNpb}. Moreover, it allows us to avoid any restrictions on the contact angle at the corners of the stationary domain, in terms of which we reformulate the free-boundary problem in the next section.			
		
	\subsection{Fixed-boundary formulation}

		As usually done in free-boundary problems, we reformulate the problem in a fixed framework. Our choice for the reference domain in which we recast the problem is the stationary domain. Stationary solutions to \eqref{evo-eq}-\eqref{evo-contact} having only two contact points at the lateral walls of the vessel and none at its bottom are couples $(h_s, \phi_s)$, with $\phi_s\in \mathbb{R}$, that solve the elliptic problem
			\begin{align}
				& -gh_s + \sigma \kappa_{h_s} = \phi_s \quad\mbox{in}\quad \mathcal{I}, \label{stationary1}\\[5pt]
				&\frac{h'_s}{\sqrt{1+(h'_s)^2}}(\pm 1)= \pm\frac{\llbracket \gamma \rrbracket}{\sigma},\label{stationary2} \\[5pt]
                &h_s(x) > h_w(x) \quad\mbox{for}\quad x\in\overline{\mathcal{I}}\label{stationary3},
			\end{align}
		where $\kappa_{h_s}$ denotes the mean curvature \eqref{curvature} of stationary surface $h_s$. For technical reasons, see Lemma \ref{lemma-diffeo},  we consider the case \begin{equation}\label{min>max}
		    \min_{\overline{\mathcal{I}}} h_s > \max_{\overline{\mathcal{I}}} h_w.
		\end{equation} In this situation, there exists a unique (smooth) stationary solution and we refer to Figure \ref{fig-plots-hs} for a qualitative description of $h_s$ according to the sign of $\llbracket \gamma \rrbracket$.
        \begin{figure}
            \centering
\includegraphics[scale=0.5]{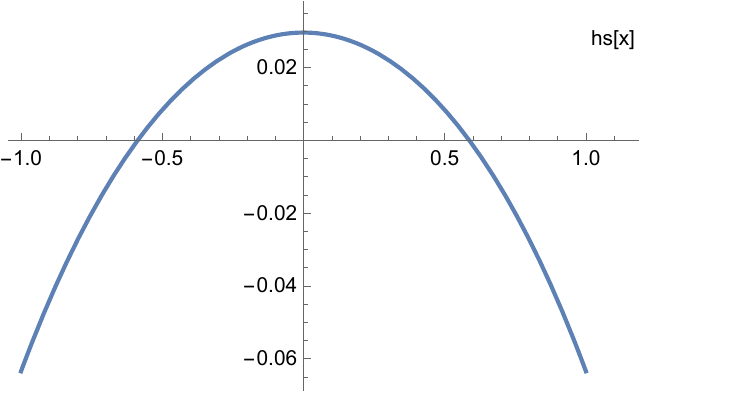}
\includegraphics[scale=0.5]{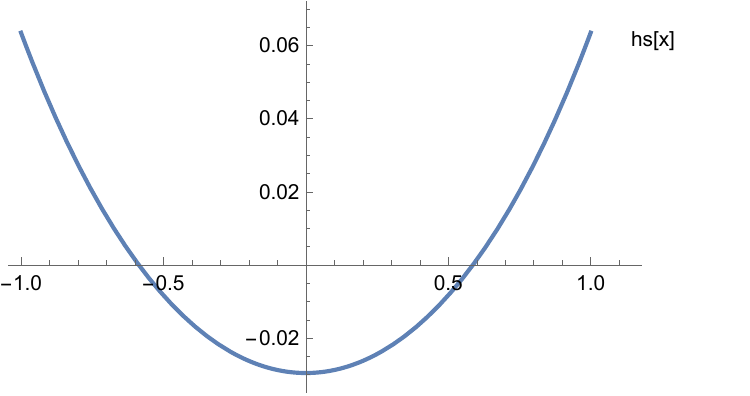}\vspace{1em}
\includegraphics[scale=0.5]{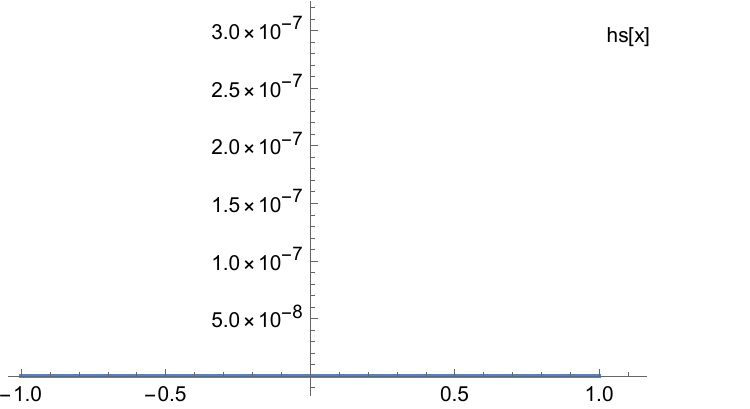}
            \caption{Plots of the different shapes of $h_s$ with zero mean over $\mathcal{I}$: concave when $\llbracket \gamma \rrbracket<0$, convex when $\llbracket \gamma \rrbracket>0$ and flat  when $\llbracket \gamma \rrbracket=0$.}
            \label{fig-plots-hs}
        \end{figure}

\begin{proposition}\label{sta-exiuni}
	Let $\llbracket \gamma \rrbracket/\sigma \in (-1,1)$. There exists some $m\geq 0$ such that, for any $M > m$, the stationary problem \eqref{stationary1}-\eqref{stationary2}-\eqref{min>max} admits a unique solution $(\phi_s, h_s)$, where $\phi_s\in \mathbb{R}$ and $h_s$ is an even $ C^\infty(\overline{\mathcal{I}})$  function, with
		 \begin{equation*}
			 \int_{\mathcal{I}} (h_s-h_w)(x)dx= M \qquad \text{and} \qquad \phi_s= \llbracket \gamma \rrbracket -\frac{g}{2}\Big(M + \int_\mathcal{I} h_w(x)dx\Big) .\end{equation*}
		\end{proposition}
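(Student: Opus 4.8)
The plan is to prove existence by a shooting argument for an equivalent scalar boundary‑value problem, and uniqueness by a convexity argument for the associated energy. \emph{Reduction.} Set $c:=\llbracket\gamma\rrbracket/\sigma\in(-1,1)$ and $\bar M:=M+\int_{\mathcal I}h_w$. Integrating the first equation of \eqref{stationary} over $\mathcal I$, using the second one and $\int_{\mathcal I}h_s=\bar M$, forces
\[
\phi_s=\llbracket\gamma\rrbracket-\frac{g\bar M}{2},
\]
which is the asserted formula. Writing $h_s=w+a_0$ with $a_0:=-\phi_s/g$, the system \eqref{stationary} together with the mean constraint becomes equivalent to the capillary boundary‑value problem
\begin{equation}\label{cap-bvp}
\sigma\Big(\tfrac{w'}{\sqrt{1+(w')^2}}\Big)'=gw\quad\text{in }\mathcal I,\qquad \tfrac{w'}{\sqrt{1+(w')^2}}(\pm1)=\pm c ,
\end{equation}
since integrating the equation in \eqref{cap-bvp} shows that every solution automatically satisfies $\int_{\mathcal I}w=2\llbracket\gamma\rrbracket/g$, consistently with the constraint. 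Hence it suffices to produce a unique solution of \eqref{cap-bvp} and to check that it is even and in $C^\infty(\overline{\mathcal I})$.

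\emph{Existence.} By the symmetry $w\mapsto-w$ one may take $c\ge0$; if $c=0$ then $w\equiv0$ works, so assume $c\in(0,1)$. I look for an even solution by solving $w''=\tfrac{g}{\sigma}\,w\,(1+(w')^2)^{3/2}$ on $[0,1]$ with $w(0)=a$, $w'(0)=0$ for a parameter $a>0$, and then reflecting. The crucial observation is that $\tfrac{\sigma}{\sqrt{1+(w')^2}}+\tfrac{g}{2}w^2$ is a first integral, equal to $\sigma+\tfrac{g}{2}a^2$; consequently $w\ge a>0$, so $w$ is strictly convex and increasing while it stays a graph, $\tfrac{w'}{\sqrt{1+(w')^2}}$ increases from $0$ towards $1$, and the value $c$ is reached precisely when $w=\sqrt{a^2+R}$ with $R:=\tfrac{2\sigma}{g}(1-\sqrt{1-c^2})\in(0,2\sigma/g)$. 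Separating variables and substituting $r=w^2-a^2$, the $x$-value at which this occurs is
\[
T(a)=\frac{1}{2\sqrt g}\int_0^R\frac{\sigma-\frac{g}{2}r}{\sqrt r\,\sqrt{\sigma-\frac{g}{4}r}\,\sqrt{r+a^2}}\,dr ,
\]
which is continuous and strictly decreasing on $(0,\infty)$, with $T(a)\to+\infty$ as $a\to0^+$ (the singularity at $r=0$ ceases to be integrable in the limit) and $T(a)\to0$ as $a\to+\infty$ (dominated convergence). Thus there is a unique $a^\ast>0$ with $T(a^\ast)=1$; the corresponding even extension solves \eqref{cap-bvp}, and, since the right‑hand side is smooth and the solution stays away from vertical tangents on $[-1,1]$, it lies in $C^\infty(\overline{\mathcal I})$. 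Setting $h_s:=w+a_0$ and $\phi_s:=-ga_0$ then provides the desired solution.

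\emph{Uniqueness.} Consider $\mathcal F(h):=\int_{\mathcal I}\big(\tfrac{g}{2}h^2+\sigma\sqrt{1+(h')^2}\big)\,dx-\llbracket\gamma\rrbracket\,(h(1)+h(-1))$ on $H^1(\mathcal I)$. It is Gâteaux differentiable and strictly convex (the quadratic term is strictly convex and the rest convex), and an integration by parts shows that an element of $C^2(\overline{\mathcal I})$ is a critical point of $\mathcal F$ on the affine set $A:=\{h\in H^1(\mathcal I):\int_{\mathcal I}h=\bar M\}$ exactly when it solves \eqref{stationary}, with $\phi_s$ the associated Lagrange multiplier. For a convex functional, a critical point on the convex set $A$ is a global minimiser over $A$; strict convexity forces the minimiser to be unique, so \eqref{stationary} with the mean constraint has at most one solution $h_s$, whence $\phi_s$ is also determined. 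Since $h_s(-\,\cdot\,)$ solves the same problem, uniqueness additionally yields that $h_s$ is even, and any solution of \eqref{stationary} is $C^\infty$ by the bootstrapping used above.

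\emph{Main obstacle.} The only substantial step is the shooting analysis — exhibiting the first integral, reducing \eqref{cap-bvp} to the single scalar equation $T(a)=1$, and establishing the monotonicity and the two limits of $T$ — where the substitution $r=w^2-a^2$, which renders the integration range $[0,R]$ independent of $a$, is what makes the argument transparent; everything else is routine. One could instead obtain existence by minimising $\mathcal F$ over $A$, but since $\sqrt{1+(h')^2}$ grows only linearly the natural space is $BV(\mathcal I)$, after which a standard yet non‑trivial regularity step would be needed to recover $C^\infty$; the shooting argument avoids this.
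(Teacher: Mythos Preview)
Your argument is correct. The paper's own proof is simply a citation to Theorems F.1--F.3 in the appendix of \cite{GuoTice2018}, so there is no in-paper argument to compare against directly. Your route---reducing to the shifted capillary BVP for $w=h_s+\phi_s/g$, establishing existence by a shooting argument based on the conserved quantity $\sigma(1+(w')^2)^{-1/2}+\tfrac{g}{2}w^2$ and the strictly decreasing hitting-time $T(a)$, and establishing uniqueness via strict convexity of the physical energy $\mathcal F$ on the mean-constrained affine set---is self-contained and entirely standard for one-dimensional capillary problems. It is likely close in spirit to the cited appendix, but your version makes this proposition independent of that external reference; the formula for $\phi_s$ is obtained exactly as intended, by integrating \eqref{stationary} and using the contact-angle boundary conditions together with the prescribed mass $M$.
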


        \begin{proof}
First of all, we set
\begin{equation}\label{def-phis}
    \phi_s
        =\llbracket \gamma \rrbracket -\frac{g}{2}\Big(M + \int_\mathcal{I} h_w(x)dx\Big).
\end{equation}
Second, we know from Theorems F.2 and F.3 of \cite{GuoTice2018} that there exists an explicit profile $\chi\in C^\infty(\overline{\mathcal{I}})$ satisfying
\begin{equation*}\label{stationary-0}
		\begin{aligned}
				& -g\chi + \sigma \kappa_{\chi} = 0 \quad\mbox{in}\quad \mathcal{I},\\[5pt]
				&\frac{\chi'}{\sqrt{1+(\chi')^2}}(\pm 1)= \pm\frac{\llbracket \gamma \rrbracket}{\sigma}.
			\end{aligned}
		\end{equation*}Then, we set \begin{equation}\label{def-hs}
    h_s(x)=\chi(x) -\min_{\overline{\mathcal{I}}} \chi + \max_{\overline{\mathcal{I}}} h_w +K \qquad \text{for} \quad x\in \overline{\mathcal{I}},
\end{equation} with $K\in \mathbb{R}$ to be determined. After injecting \eqref{def-phis}-\eqref{def-hs} into \eqref{stationary1} and integrating over $\mathcal{I}$, we obtain 
\begin{equation*}
-g \int_\mathcal{I} \Big(\chi(x)-\min_{\overline{\mathcal{I}}} \chi + \max_{\overline{\mathcal{I}}} h_w +K\Big)dx + 2\llbracket \gamma \rrbracket  = 2\llbracket \gamma \rrbracket - g \Big(M + \int_\mathcal{I} h_w(x)dx\Big),
\end{equation*}which yields both
\begin{equation*}
 \int_\mathcal{I}(h_s-h_w)(x)dx=M \qquad \text{and}\qquad   K= \frac{M-m}{2},
 \end{equation*}
 where
 \begin{equation*} \quad m=\int_\mathcal{I} \Big(\chi(x) -\min_{\overline{\mathcal{I}}} \chi + \max_{\overline{\mathcal{I}}} h_w - h_w(x)\Big)dx\geq 0.\end{equation*}
Choosing the fluid mass $M>m$ implies that $K>0$ and we deduce from \eqref{def-hs} that
\begin{equation*}
\min_{\overline{\mathcal{I}}} h_s= \min_{\overline{\mathcal{I}}} \chi -\min_{\overline{\mathcal{I}}} \chi + \max_{\overline{\mathcal{I}}} h_w + K > \max_{\overline{\mathcal{I}}} h_w.
\end{equation*}
The uniqueness part follows the same argument used in Theorem F.1 of \cite{GuoTice2018}.
\end{proof}

	We then denote the stationary domain by
		\begin{equation}\label{Omega-s}
  \Omega_s= \left\{(x,y)\in \mathbb{R}^2 \ | \ x\in \mathcal{I}, \  h_w(x)<y< h_s(x) \right\},\end{equation}
		where $h_s$ is the surface profile of the unique solution to \eqref{stationary1}-\eqref{stationary2}-\eqref{min>max}. Its  fluid and solid boundary parts are, respectively,
		\begin{equation*}\begin{aligned}
				&\Gamma=\left\{(x, h_s(x)) \ | \ x\in \mathcal{I} \right\},\\[5pt] &\Gamma_w=\left\{(\pm 1, y) \ | \   h_w(\pm 1)\leq y \leq h_s(\pm 1) \right\} \ \cup \ \left\{(x, h_w(x)) \ | \ x\in \mathcal{I}\right\}. \\[5pt]
				\end{aligned}
		\end{equation*}
        Note that the partial wetting assumption $\llbracket \gamma\rrbracket/\sigma \in (-1,1)$ guarantees that  $h_s\in C^\infty(\overline{\mathcal{I}})$, so that $\Omega_s$ is a Lipschitz domain and does not present cusps. In order to reformulate the problem as a fixed-boundary problem, we introduce a diffeomorphism $\varphi_\eta: (0,T) \times\overline{\Omega}_s\rightarrow \overline{\Omega}(t)$ such that \begin{equation}\label{diff-prop}
			\varphi_\eta (t,\Gamma_w)=\Gamma_w(t), \qquad \ \varphi_\eta(t,\Gamma)=\Gamma(t) \qquad  \forall t\in (0,T)\end{equation}
and the surface perturbation $\eta=h-h_s$. In the next lemma we make explicit the choice of the diffeomorphism and show its regularizing property.
To this end, we introduce a smooth cut-off function $\xi:\mathbb{R}\rightarrow \mathbb{R}$ with $0\leq \xi \leq 1$ and
\begin{equation}\label{cutoff-diffeo}\begin{aligned}
\xi(y)=0 \quad \text{for}\quad  y\leq \max_{\overline{\mathcal{I}}} h_w +(\min_{\overline{\mathcal{I}}} h_s -\max_{\overline{\mathcal{I}}} h_w)/4, \\ \xi(y)=1\quad  \text{for}\quad  y \geq \min_{\overline{\mathcal{I}}} h_s -(\min_{\overline{\mathcal{I}}} h_s -\max_{\overline{\mathcal{I}}} h_w)/4 .
    \end{aligned}
\end{equation}

\begin{lemma}\label{lemma-diffeo}
Let $\eta\in H^{3/2+}(\mathcal{I})$, $\xi$ be as in \eqref{cutoff-diffeo} and $E$ be a bounded extension operator from $H^s(\mathcal{I})$ to $H^s(\mathbb{R})$ for any real $s\geq0$. We define the mapping
	\begin{equation}\label{diffeo}\varphi_\eta(t,x,y)= \big(x, y+ \xi(y)\eta^\dag(t,x,y)\big)\end{equation} where $\eta^\dag(t,x,y)= P \ast E\eta (t,x,y-h_s(x))$ and
\begin{equation*}
 P(x,y)=-\frac{1}{\pi} \frac{y}{x^2 +y^2}
\end{equation*} is the Poisson kernel and $\ast$ denotes convolution with respect to $x$. There exists $\alpha>0$ such that, for $\|\eta\|_{H^{3/2+}(\mathcal{I})}<\alpha$, the mapping  $\varphi_\eta$ is a $C^1$-diffeomorphism from $\Omega_s$ to $\Omega(t)$ verifying \eqref{diff-prop}. Furthermore, the diffeomorphism is regularizing: for any $s\geq 2$,  there exists a constant $C=C(s,h_s)>0$ such that if $\eta\in H^{s-1/2}(\mathcal{I})$ then
\begin{equation}\label{regular-prop}\begin{aligned}
\| \eta^\dag\|_{H^s(\Omega_s)}\leq C \|\eta\|_{H^{s-1/2}(\mathcal{I})}.
	\end{aligned}
\end{equation}
\end{lemma}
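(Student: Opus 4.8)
The plan is to verify separately the three assertions: that $\varphi_\eta$ maps $\Omega_s$ onto $\Omega(t)$ respecting the boundary decomposition \eqref{diff-prop}, that it is a $C^1$-diffeomorphism for small $\eta$, and that it satisfies the regularizing estimate \eqref{regular-prop}. First I would record the basic properties of the harmonic extension $\eta^\dag$. Since $P\ast(\cdot)$ is the Poisson semigroup on the upper half-plane, $w(x,z):=P\ast E\eta(x,z)$ is harmonic in $\{z>0\}$, bounded together with all derivatives by $\|E\eta\|_{H^{s}}$, and its trace at $z=0$ is $E\eta$; by construction $\eta^\dag(t,x,y)=w(x,y-h_s(x))$, so that $\eta^\dag(t,x,h_s(x))=E\eta(t,x)=\eta(t,x)$ for $x\in\mathcal I$. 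Because $\xi\equiv 1$ near $y=h_s(x)$ (this is the purpose of \eqref{cutoff-diffeo}, noting $h_w<h_s$ on $\overline{\mathcal I}$), the second component of $\varphi_\eta$ equals $h_s(x)+\eta(t,x)=h(t,x)$ on $\Gamma$, giving $\varphi_\eta(t,\Gamma)=\Gamma(t)$. Near the bottom and lateral walls $\xi\equiv 0$, so $\varphi_\eta=\mathrm{id}$ there, which immediately yields $\varphi_\eta(t,\Gamma_w)=\Gamma_w(t)$ since the vertical-wall portions of $\Gamma_w$ and $\Gamma_w(t)$ differ only above $y=h_s(\pm1)$, i.e.\ in the region where $\xi=1$ and the map is a pure vertical translation along $x=\pm1$.

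Next, for the diffeomorphism property I would compute the Jacobian of \eqref{diffeo}:
\begin{equation*}
D\varphi_\eta=\begin{pmatrix} 1 & 0 \\ \partial_x\big(\xi\eta^\dag\big) & 1+\partial_y\big(\xi\eta^\dag\big)\end{pmatrix},
\qquad \det D\varphi_\eta = 1+\partial_y\big(\xi(y)\eta^\dag(t,x,y)\big).
\end{equation*}
The entries $\partial_x(\xi\eta^\dag)$ and $\partial_y(\xi\eta^\dag)$ are controlled in $L^\infty$ by $C(h_s,\xi)\|E\eta\|_{H^{3/2+}}\le C\|\eta\|_{H^{3/2+}}$, using the Sobolev embedding $H^{3/2+}(\mathbb R)\hookrightarrow C^1$ applied to the boundary trace, together with the smoothing of the Poisson kernel: $\|\partial^\beta w\|_{L^\infty(\{z>0\})}\lesssim\|E\eta\|_{H^{|\beta|+1/2+}}$. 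Hence for $\|\eta\|_{H^{3/2+}}<\alpha$ with $\alpha$ small enough, $\det D\varphi_\eta\ge 1/2>0$ everywhere, so $\varphi_\eta$ is a local $C^1$-diffeomorphism; globality (injectivity and surjectivity onto $\overline\Omega(t)$) follows because $\varphi_\eta$ fixes the $x$-coordinate and, for each fixed $x$, $y\mapsto y+\xi(y)\eta^\dag(t,x,y)$ is a strictly increasing continuous bijection from $[h_w(x),h_s(x)]$ onto $[h_w(x),h(t,x)]$ (monotone because its derivative is $\det D\varphi_\eta\ge 1/2$, and it sends the endpoints correctly by the boundary analysis above). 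That $\varphi_\eta$ is $C^1$ on $\overline\Omega_s$ rather than merely $\Omega_s$ comes again from the smoothing of $P$ and smoothness of $h_s$ up to $\partial\mathcal I$, which holds by Proposition \ref{sta-exiuni} in the partial wetting regime.

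Finally, the regularizing estimate \eqref{regular-prop}. I would estimate $\|\eta^\dag\|_{H^s(\Omega_s)}$ by first extending and reducing to a half-space/strip estimate. Writing $\eta^\dag(t,x,y)=w(x,y-h_s(x))$, the chain rule and the smoothness of $h_s$ reduce matters to bounding $\|w\|_{H^s}$ on a bounded strip $\{0<z<\text{const}\}$, where the composition with the smooth map $(x,y)\mapsto(x,y-h_s(x))$ costs only a constant $C(s,h_s)$. Then one uses the classical trace/extension gain for the Poisson extension: if $g\in H^{s-1/2}(\mathbb R)$ then its harmonic extension lies in $H^s$ of the half-space (equivalently, $\|P\ast g\|_{H^s(\mathbb R\times(0,1))}\le C_s\|g\|_{H^{s-1/2}(\mathbb R)}$), which one proves on the Fourier side: $\widehat{P\ast g}(\xi,z)=e^{-|\xi|z}\widehat g(\xi)$, and $\int_0^1|\xi|^{2k}e^{-2|\xi|z}\,dz\lesssim|\xi|^{2k-1}$, summing the contributions of tangential derivatives up to order $s$ (using harmonicity to trade $\partial_z^2$ for $-\partial_x^2$ for the non-integer part, or interpolation). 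Combining with $\|E\eta\|_{H^{s-1/2}(\mathbb R)}\le C\|\eta\|_{H^{s-1/2}(\mathcal I)}$ from boundedness of $E$ gives \eqref{regular-prop}.

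The main obstacle I anticipate is not any single estimate but the bookkeeping needed to make the gain of one full derivative uniform up to the \emph{corners} of $\Omega_s$: one must check that composing with $y\mapsto y-h_s(x)$ and multiplying by the cut-off $\xi$ do not destroy the half-space Sobolev gain near $x=\pm1$. This is exactly where the partial wetting hypothesis $\llbracket\gamma\rrbracket/\sigma\in(-1,1)$ enters, guaranteeing $h_s\in C^\infty(\overline{\mathcal I})$ with no cusp, so that the change of variables is a genuine $C^\infty$ diffeomorphism of the strip and the constant in \eqref{regular-prop} is finite; I would isolate this point as the one deserving careful treatment, while the Fourier-side Poisson estimate and the Jacobian bound are routine.
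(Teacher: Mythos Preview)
Your approach is essentially the same as the paper's: Fourier analysis of the Poisson extension for both the $L^\infty$ Jacobian bounds and the $H^s$ regularizing estimate, combined with the change of variables $z=y-h_s(x)$ and the smoothness of $h_s$. Two minor corrections are in order. First, since $y-h_s(x)<0$ on $\Omega_s$, the relevant half-space is the \emph{lower} one (the kernel $P(x,y)=-\tfrac{1}{\pi}\tfrac{y}{x^2+y^2}$ has Fourier transform $e^{y|\zeta|}$, decaying for $y<0$); this is purely a convention issue and does not affect your estimates. Second, your treatment of the lateral walls is not quite right: $\xi$ depends only on $y$, so it does \emph{not} vanish near the upper parts of the vertical walls, and on those portions $\varphi_\eta$ is not the identity nor a pure translation (since $\eta^\dag(t,\pm1,y)$ depends on $y$). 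The correct argument---which you already have in hand for global injectivity---is simply that $\varphi_\eta$ fixes the $x$-coordinate and, for each fixed $x=\pm1$, the map $y\mapsto y+\xi(y)\eta^\dag(t,\pm1,y)$ is a strictly increasing bijection from $[h_w(\pm1),h_s(\pm1)]$ onto $[h_w(\pm1),h(t,\pm1)]$, which gives $\varphi_\eta(t,\Gamma_w)=\Gamma_w(t)$ directly.
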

\begin{proof} First, we show that the second component of $\varphi_\eta$ is monotone with respect to $y$ and bounded for fixed $x$, that is,
\begin{equation}\label{detJeta} c \leq \partial_y \varphi_\eta(x,y)=1 + \xi'(y) \eta^\dag(x,y)+\xi(y) \partial_y \eta^\dag(x,y) \leq \frac{1}{c}, \qquad (x,y)\in \Omega_s\end{equation}
for some constant $c>0$.
 From the definition of $\eta^\dag$ and using the change of variable $z=y-h_s(x)$, it follows that
\begin{align*}
\|\eta^\dag\|_{L^\infty(\Omega_s)} \leq \|P \ast E\eta \|_{L^\infty(\mathbb{R}^2_-)}.
\end{align*}Applying the convolution theorem and using the Fourier transform $\widehat{P}(\zeta,y) =e^{y|\zeta|}$ yield that
\begin{equation*}
\|P \ast E\eta (\cdot, y)\|_{L^\infty(\mathbb{R})}\leq \|\widehat{P \ast E\eta} (\cdot, y) \|_{L^1(\mathbb{R})}= \|e^{y|\cdot|}\widehat{E\eta}\|_{L^1(\mathbb{R})}
\end{equation*}for any $y\leq 0$ and, after writing
$$|\widehat{E\eta}(\zeta)|=\frac{1}{(1+ |\zeta|^2)^{1/4+\eps}}(1+ |\zeta|^2)^{1/4+\eps}|\widehat{E\eta}(\zeta)|$$ for $\eps>0$ arbitrarily small,
Cauchy-Schwarz inequality implies that there exists $C>0$ such that
$$
\|P \ast E\eta (\cdot, y)\|_{L^\infty(\mathbb{R})}\leq C \|E\eta\|_{H^{1/2+}(\mathbb{R})}
$$ for any $y\leq 0$. Thus, thanks to the boundedness of the extension operator, we obtain that \begin{equation}\label{est-etadagLinf}\|\eta^\dag\|_{L^\infty(\Omega_s)} \leq C \|E\eta\|_{H^{1/2+}(\mathbb{R})}\leq C \|\eta\|_{H^{1/2+}(\mathcal{I})}.\end{equation} Moreover, after computing that
\begin{align*}
&	\partial_ x \eta^\dag (t,x,y)= \partial_x P \ast E\eta  (t,x,y-h_s(x))- h'_s (x)\ \partial_y P \ast E\eta (t,x,y-h_s(x)),\\[5pt]
&	\partial_ y \eta^\dag (t,x,y)= \partial_y P \ast E\eta  (t,x,y-h_s(x)),
\end{align*}we combine the previous argument with the fact that $\widehat{\nabla P}(\zeta,y)= \binom{i\zeta}{\partial_y} e^{y|\zeta|}$ and find that there exists $C=C(h_s)>0$ such that \begin{equation*}
\|\nabla \eta^\dag \|_{L^\infty(\Omega_s)}\leq C \|\eta\|_{H^{3/2+}(\mathcal{I})}.
\end{equation*} It follows from \eqref{detJeta} that
\begin{equation}\label{est-detJeta}
 1- C_1\|\eta\|_{H^{3/2+}(\mathcal{I})}\leq \det(J_\eta)\leq  1 +C_2 \|\eta\|_{H^{3/2+}(\mathcal{I})}
\end{equation}for some constants $C_1,C_2>0$ (depending only on $\xi$ and $h_s$), so that
there exists $\alpha>0$ such that $$c(\alpha)<\det(J_\eta)<\frac{1}{c(\alpha)},$$ for some $c(\alpha)>0$
  when $\|\eta\|_{H^{3/2+}(\mathcal{I})}<\alpha$.
From the definition of the Poisson kernel, we have that
\begin{equation}\label{trace-diffeo}\eta^\dag (t,x,h_s(x))= P \ast E\eta (t,x,0)= \eta(t, x) \quad \text{for} \quad x\in\mathcal{I},\end{equation}
which implies
\begin{align}\varphi_\eta(t,x,h_s(x))=(x, h(t,x)) \quad \text{for} \quad x\in\mathcal{I},\label{uno}\end{align}
while using the properties of the cut-off $\xi$ yields
\begin{equation}\label{dos} \begin{aligned}
 &\varphi_\eta(t,x, h_w(x))= (x,h_w(x)) \quad \text{for} \quad x\in\mathcal{I},\\[5pt]
 &\varphi_\eta(t,\pm 1, y)= (\pm 1,y + \xi(y)\eta^\dag(t,\pm 1, y)) \quad \text{for} \quad y\in(h_w(\pm 1), h_s(\pm 1)).
\end{aligned}\end{equation}
Hence, $C^1$-regularity and monotonicity of $\varphi_\eta$ together with \eqref{uno}-\eqref{dos} prove that $\varphi_\eta$ is a $C^1$-diffeomorphism from $\Omega_s$ to $\Omega(t)$ that satisfies \eqref{diff-prop}.

We now show that $\varphi_\eta$ is a regularizing diffeomorphism.
Since $\Omega_s$ is bounded, using Hölder inequality and \eqref{est-etadagLinf} we have that
\begin{equation*}
    \|\eta^\dag\|_{L^2(\Omega_s)}\leq |\Omega_s|^{1/2} \|\eta^\dag\|_{L^\infty(\Omega_s)}\leq C\|\eta\|_{H^{1/2+}(\mathcal{I})}.
\end{equation*}
After the change of variable $z=y-h_s(x)$ and thanks to the smoothness of $h_s$, we have that
\begin{equation*}\begin{aligned}
	\|\nabla \eta^\dag \|^2_{L^2(\Omega_s)} \leq &\int_{\mathbb{R}}\int_{-\infty}^{Eh_s(x)} |\partial_{x}P\ast E \eta (x, y - Eh_s(x))|^2 dydx\\[5pt]&+ \left(1+ \|h'_s\|^2_{L^\infty(\mathcal{I})}\right)\int_{\mathbb{R}}\int_{-\infty}^{Eh_s(x)} |\partial_{y}P\ast E\eta (x, y - Eh_s(x))|^2dydx
	\\[5pt]\leq & \ C \int_{\mathbb{R}}\int_{\mathbb{R}_-}|\nabla P\ast E\eta (x, z)|^2 dzdx = C \|\nabla P \ast E\eta \|^2_{L^2(\mathbb{R}^2_-)}.
\end{aligned}
\end{equation*}

Passing to Fourier coordinates in the variable $x$,  using Plancherel identity and the boundedness of the extension operator yield

\begin{equation*}\begin{aligned}
\|\nabla P  \ast E\eta \|^2_{L^2(\mathbb{R}^2_-)}=&	\int_{\mathbb{R}_-}	\int_{\mathbb{R}} \left| \binom{i\zeta}{\partial_y} e ^{y|\zeta|} \widehat{E\eta}(\zeta)\right|^2 d\zeta dy =  \int_{\mathbb{R}}\int_{\mathbb{R}_-} 2|\zeta|^2 e^{2y|\zeta|} |\widehat{E\eta}(\zeta)|^2 dyd\zeta \\[5pt]=&\int_{\mathbb{R}} |\zeta|  |\widehat{E\eta}(\zeta)|^2 d\zeta\leq \|E\eta\|^2_{H^{1/2}(\mathbb{R})}\leq C \|\eta\|^2_{H^{1/2}(\mathcal{I})}.
\end{aligned}
\end{equation*}Arguing in an analogous fashion, we can estimate also the $H^k$-norms for any integer $k\geq 0$. More precisely, for any integer $k\geq 0$, there exists a constant $C=C(k,h_s)>0$  such that
\begin{equation*}
\|\nabla \eta^\dag\|_{H^k(\Omega_s)}\leq C \|\eta\|_{H^{k+1/2}(\mathcal{I})}.
\end{equation*}
The non-integer version of \eqref{regular-prop} is derived by interpolation using the equivalence of the fractional Sobolev-Slobodeckij spaces $H^{k+s}$ with $s\in (0,1)$ and the Sobolev spaces defined as interpolation spaces.
\end{proof}
		
		With the diffeomorphism \eqref{diffeo} at hand, we reformulate the free boundary problem  \eqref{evo-eq}-\eqref{DNpb} in a fixed framework. Let us define the matrices
		\begin{equation}\label{Sigmaeta}\Sigma_\eta=(J_\eta^{-1})^T=\left(\begin{matrix}
				1& \dfrac{-\xi\partial_x \eta^\dag}{1+\xi' \eta^\dag+\xi\partial_y \eta^\dag}\\[10pt]
				0&\dfrac{1}{1+\xi' \eta^\dag+\xi\partial_y \eta^\dag}
			\end{matrix} \right),\end{equation} \begin{equation}\label{Aeta}A_\eta= \det (J_\eta) \ \Sigma^T_\eta \Sigma_\eta=  \left(\begin{matrix}
				1+\xi' \eta^\dag+\xi\partial_y \eta^\dag&-\xi\partial_x \eta^\dag  \\[5pt]
				-\xi \partial_x \eta^\dag &\dfrac{1+ (\xi\partial_x \eta^\dag)^2}{1+\xi' \eta^\dag+\xi\partial_y \eta^\dag}
			\end{matrix} \right).\end{equation}
	Then, \eqref{evo-eq}-\eqref{DNpb}   transforms into
		\begin{equation}\label{evo-eq-fix}
			\partial_t h = \Sigma_\eta\nabla \widetilde{\Phi}\cdot  N_h   \qquad  \mbox{on}\quad \Gamma
		\end{equation}
		coupled with the mixed elliptic problem for the transformed potential $\widetilde{\Phi}=\phi\circ \varphi_\eta$
		\begin{equation}\label{DNpb-fix}
			\begin{aligned}
				\nabla \cdot (A_\eta \nabla\widetilde{\Phi} ) =0\qquad  &\mbox{in}\quad \Omega_s,\\[5pt]
				\widetilde{\Phi}=-gh + \sigma \kappa_h\qquad&\mbox{on}\quad \Gamma,\\[5pt]
				\Sigma_\eta \nabla\widetilde{\Phi} \cdot n=0 \qquad&\mbox{on}\quad \Gamma_w,
			\end{aligned}
		\end{equation}
In fact,  \eqref{evo-eq-fix}-\eqref{DNpb-fix} can be reformulated as a perturbation problem. We know from Proposition \ref{sta-exiuni} that the stationary solution $(h_s, \phi_s)$ solves \eqref{stationary1}-\eqref{stationary2}-\eqref{min>max}. Writing $\widetilde{\Phi} = \phi_s + \Phi$, $h=h_s+ \eta$ and plugging into the equations, we find that the perturbation $(\eta, \Phi)$ solves the evolution equation
	\begin{equation}\label{evo-eq-per}
		\partial_t \eta = \Sigma_\eta \nabla \Phi \cdot N_h \quad \mbox{on}\quad \Gamma
	\end{equation}coupled with the mixed elliptic problem
	\begin{equation}\label{DNpb-per}
		\begin{aligned}
			\nabla \cdot (A_\eta\nabla \Phi)=0 \quad &\mbox{in} \quad \Omega_s,\\[5pt]
			\Phi= -g\eta + \sigma \Big(\frac{\eta'}{(1+ (h'_s)^2)^{3/2}} + \mathcal{R}(h'_s, \eta') \Big)'\quad &\mbox{on} \quad \Gamma,\\[5pt]
			\Sigma_\eta \nabla \Phi \cdot n=0    \quad &\mbox{on} \quad \Gamma_w.
		\end{aligned}
	\end{equation}
				Note that in the right-hand side of the second equation in \eqref{DNpb-per} we have expanded $\kappa_h$ at first order in $\eta'$ and the remainder is given by the smooth mapping  \begin{equation}\label{R}\mathcal{R}(z_1,z_2)= \frac{z_1 + z_2}{\sqrt{1+(z_1+ z_2)^2}} - \frac{z_1}{\sqrt{1+z_1^2}} -\frac{z_2}{(1+z_1^2)^{3/2}}.\end{equation}
	
	Moreover, the evolution equation for the contact points \eqref{evo-contact} becomes
	\begin{equation}\label{evo-contact-per}
		\partial_t \eta (t,\pm1) =  \mp \sigma \Big(  \frac{\eta'}{(1+(h_s')^2)^{3/2}} + \mathcal{R}(h'_s,\eta')\Big)(t,\pm1).
	\end{equation}

	\section{Basic energy-dissipation equalities}\label{sec-ED}
 In this section we derive several energy-dissipation equalities that will be the basis of the scheme of a priori estimates for \eqref{evo-eq-per}-\eqref{evo-contact-per}. Sufficiently regular solutions to \eqref{evo-eq}-\eqref{evo-contact} satisfy the fundamental energy-dissipation equality
	\begin{equation}\label{energy-eq}
		\frac{d}{dt} E(h)+ \int_{\Omega(t)} |\nabla \phi(t)|^2 + (\partial_t h)^2(t,-1) + (\partial_t h)^2(t,1)  =0,
	\end{equation}with the physical energy $E(h)$ given by
	\begin{equation*}
    		E(h)= \int_\mathcal{I} \Big(\frac{g}{2}h^2 + \sigma\sqrt{1+(h')^2}\Big)dx -\llbracket \gamma \rrbracket \left(h(t,-1) +  h(t,1)\right).
	\end{equation*} In the expression of the physical energy, in addition to the standard energy contributions related to the gravity and the surface tension,  a localized energy term appears due to the interaction at the contact points.
	Instead of showing the details for the derivation of \eqref{energy-eq}, we prove an equivalent energy-dissipation equality in terms of the perturbation $(\Phi,\eta)$.
	\begin{proposition}\label{prop-energy}
		Let $h_s$ be the stationary surface as in Proposition \ref{sta-exiuni} and $(\Phi, \eta)$ be a regular solution to \eqref{evo-eq-per}-\eqref{evo-contact-per}. Then, the following equality holds:
		\begin{equation}\label{ene-eq-per}\begin{aligned}
				\frac{d}{dt}& \bigg[ \int_\mathcal{I}\Big(\frac{g}{2}\eta^2 + \frac{\sigma}{2} \frac{(\eta')^2}{(1+ (h'_s)^2)^{3/2}}  +\sigma \mathcal{Q}_0 (h_s',\eta') \Big)dx\bigg] \\[5pt]&
				+ \int_{\Omega_s} \det(J_\eta)|\Sigma_\eta\nabla \Phi|^2 + (\partial_t \eta)^2 (t,-1) + (\partial_t \eta)^2 (t,1)=0,\end{aligned}
		\end{equation}
		with
	$
			\mathcal{Q}_0(h_s', \eta') = \int_0^{\eta'}\mathcal{R}(h_s', z)dz
$ and $\mathcal{R}$ given by \eqref{R}.
	\end{proposition}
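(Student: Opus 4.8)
The plan is a direct energy identity: differentiate the first bracket of \eqref{ene-eq-per} in time and convert the resulting terms into the dissipation by means of the evolution laws \eqref{evo-eq-per}, \eqref{evo-contact-per} and the elliptic problem \eqref{DNpb-per}. Introduce the shorthand
\[
\mathcal{P}(t):=\int_\mathcal{I}\Big(\frac{g}{2}\eta^2+\frac{\sigma}{2}\frac{(\eta')^2}{(1+(h_s')^2)^{3/2}}+\sigma\,\mathcal{Q}_0(h_s',\eta')\Big)\,dx,
\qquad
G:=\frac{\eta'}{(1+(h_s')^2)^{3/2}}+\mathcal{R}(h_s',\eta').
\]
Since $h_s$ is time-independent, the definition $\mathcal{Q}_0(h_s',\eta')=\int_0^{\eta'}\mathcal{R}(h_s',z)\,dz$ gives $\partial_t\mathcal{Q}_0(h_s',\eta')=\mathcal{R}(h_s',\eta')\,\partial_t\eta'$, so that
\[
\frac{d}{dt}\mathcal{P}=\int_\mathcal{I} g\,\eta\,\partial_t\eta\,dx+\int_\mathcal{I}\sigma\,G\,\partial_t\eta'\,dx .
\]
In the last integral $\partial_t\eta'=\partial_x(\partial_t\eta)$, so an integration by parts in $x\in\mathcal{I}$ produces the bulk term $-\int_\mathcal{I}\sigma(\partial_x G)\,\partial_t\eta\,dx$ together with the endpoint contribution $\big[\sigma G\,\partial_t\eta\big]_{x=-1}^{x=1}$.

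Next I would read the Dirichlet condition on $\Gamma$ in \eqref{DNpb-per} as $g\eta-\sigma\,\partial_x G=-\Phi$, so the two bulk terms collapse to $-\int_\mathcal{I}\Phi\,\partial_t\eta\,dx$; and the contact-point law \eqref{evo-contact-per} as $\sigma G(t,\pm1)=\mp\,\partial_t\eta(t,\pm1)$, so that $\big[\sigma G\,\partial_t\eta\big]_{-1}^{1}=-(\partial_t\eta)^2(t,1)-(\partial_t\eta)^2(t,-1)$. This gives
\[
\frac{d}{dt}\mathcal{P}=-\int_\mathcal{I}\Phi\,\partial_t\eta\,dx-(\partial_t\eta)^2(t,1)-(\partial_t\eta)^2(t,-1).
\]

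It then remains to identify $\int_\mathcal{I}\Phi\,\partial_t\eta\,dx$ with the pulled-back Dirichlet energy $\int_{\Omega_s}\det(J_\eta)\,|\Sigma_\eta\nabla\Phi|^2$. For this I would multiply the elliptic equation $\nabla\cdot(A_\eta\nabla\Phi)=0$ of \eqref{DNpb-per} by $\Phi$ and integrate over $\Omega_s$: since $A_\eta=\det(J_\eta)\,\Sigma_\eta^T\Sigma_\eta$, the interior term is exactly $\int_{\Omega_s}\det(J_\eta)\,|\Sigma_\eta\nabla\Phi|^2$, while the boundary term splits over $\Gamma$ and $\Gamma_w$. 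On $\Gamma_w$ the Euclidean conormal flux $A_\eta\nabla\Phi\cdot n$ is a positive multiple of $\Sigma_\eta\nabla\Phi\cdot n$ (it equals $\det(J_\eta)$ times it on the vertical walls, where $n=(\pm1,0)$, and equals it on the bottom, where $\xi\equiv0$ and thus $A_\eta=\Sigma_\eta=\mathrm{Id}$), hence it vanishes by the Neumann condition in \eqref{DNpb-per}. On $\Gamma=\{(x,h_s(x)):x\in\mathcal{I}\}$ the surface measure and unit normal combine to give $\int_\mathcal{I}(A_\eta\nabla\Phi\cdot N_{h_s})\,\Phi\,dx$; using $\xi\equiv1$, $\xi'\equiv0$ on $\Gamma$ together with the explicit matrices \eqref{Sigmaeta}--\eqref{Aeta}, one checks the pointwise identity $A_\eta\nabla\Phi\cdot N_{h_s}=\Sigma_\eta\nabla\Phi\cdot N_h$ on $\Gamma$, which by \eqref{evo-eq-per} equals $\partial_t\eta$. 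Therefore $\int_{\Omega_s}\det(J_\eta)\,|\Sigma_\eta\nabla\Phi|^2=\int_\mathcal{I}\Phi\,\partial_t\eta\,dx$, and substituting into the previous display yields \eqref{ene-eq-per}.

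All integrations by parts are licit under the assumed regularity of $(\Phi,\eta)$ (in particular $\Phi\in H^1(\Omega_s)$ with a well-defined trace on $\partial\Omega_s$, and $\eta,\partial_t\eta\in H^1(\mathcal{I})$ so that the endpoint values at $x=\pm1$ make sense). The only genuinely non-routine point is the handling of the two elliptic boundary terms: that the transformed Neumann condition on $\Gamma_w$ kills that boundary integral, and that on $\Gamma$ the conormal flux $A_\eta\nabla\Phi\cdot N_{h_s}$ coincides with $\Sigma_\eta\nabla\Phi\cdot N_h$; both are short computations relying on $\xi\equiv1$ near $\Gamma$ and $\xi\equiv0$ near the bottom part of $\Gamma_w$. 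Everything else is a one-dimensional integration by parts plus careful bookkeeping of the signs coming from the $\mp$ in \eqref{evo-contact-per}. (Alternatively, one may first establish the physical energy identity \eqref{energy-eq} for $(h,\phi)$ and then pull it back through $\varphi_\eta$, but working directly with the perturbation $(\Phi,\eta)$ avoids the change-of-variables step.)
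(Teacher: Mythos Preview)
Your proof is correct and follows essentially the same route as the paper's: both combine Green's identity for $\nabla\cdot(A_\eta\nabla\Phi)=0$ against $\Phi$ (using $\Sigma_\eta n=n$ on $\Gamma_w$ and $\det(J_\eta)\Sigma_\eta N_{h_s}=N_h$ on $\Gamma$) with the Dirichlet condition in \eqref{DNpb-per}, an integration by parts in $x$, and the contact-point law \eqref{evo-contact-per}. The only cosmetic difference is that you differentiate the energy functional first and then invoke the equations, whereas the paper starts from the $L^2$-pairing of \eqref{evo-eq-per} with $g\eta$; the ingredients and computations are identical.
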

	
	\begin{proof}
	Since $\Phi$ is regular and solves the elliptic problem \eqref{DNpb-per}, we have that
		\begin{equation}\label{green-Phi}\begin{aligned}
				0&= \int_{\Omega_s} \nabla \cdot (A_\eta\nabla \Phi) \Phi = \int_{\partial \Omega_s} A_\eta \nabla \Phi \cdot n \Phi - \int_{\Omega_s} \det(J_\eta)|\Sigma_\eta \nabla \Phi|^2 \\[5pt]&= \int_{\partial \Omega_s}   \det(J_\eta)\Sigma_\eta \nabla \Phi \cdot \Sigma_\eta n \Phi - \int_{\Omega_s} \det(J_\eta)|\Sigma_\eta \nabla \Phi|^2 \\[5pt]&= \int_{\Gamma} \Sigma_{\eta}\nabla \Phi \cdot \frac{N_h}{|N_{h_s}|} \Phi  - \int_{\Omega_s} \det(J_\eta)|\Sigma_\eta \nabla \Phi|^2 ,
			\end{aligned}
		\end{equation}
		where $N_{h_s}(x)=(-h'_s(x),1)$ is the outward normal vector to $\Gamma$. In the last equality, on the one hand, we have used that
		\begin{equation*}
				\Sigma_\eta n =n \quad \text{on} \quad \Gamma_w
		\end{equation*}
		and the Neumann-type condition in \eqref{DNpb-per} to derive that the boundary integral over $\Gamma_w$ vanishes. On the other hand,  the chain rule applied to \eqref{uno} implies that $$  \det(J_\eta)\Sigma_\eta \frac{N_{h_s}}{|N_{h_s}|} = \frac{N_h}{|N_{h_s}|} \quad \mbox{on} \quad \Gamma.$$
		Taking the $L^2(\mathcal{I})$-scalar product of \eqref{evo-eq-per} with $g\eta$,  employing the Dirichlet boundary condition in \eqref{DNpb-per} and \eqref{green-Phi} yields
		\begin{equation}\label{scaprod-eta}\begin{aligned}
				\frac{d}{dt}&\int_\mathcal{I} \frac{g}2\eta^2dx \\[5pt]&= -\int_{\Gamma}  \Sigma_{\eta}\nabla \Phi\cdot \frac{N_h}{|N_{h_s}|} \Phi  + \int_\mathcal{I} \sigma \left(\frac{\eta'}{(1+(h'_s)^2)^{3/2}} + \mathcal{R}(h'_s, \eta')\right)' \partial_t \eta dx\\[5pt]&=-\int_{\Omega_s} \det(J_\eta)|\Sigma_\eta \nabla \Phi|^2 + \int_\mathcal{I} \sigma \left(\frac{\eta'}{(1+(h'_s)^2)^{3/2}} + \mathcal{R}(h'_s, \eta')\right)' \partial_t \eta dx.
			\end{aligned}
		\end{equation}
		We focus on the second term in the last line. By integration by parts, it can be written as
		\begin{equation*}
			\begin{aligned}
			&\int_{\mathcal{I}} \sigma \Big(\frac{\eta'}{(1+(h'_s)^2)^{3/2}} + \mathcal{R}(h'_s, \eta')\Big)' \partial_t \eta dx
			\\[5pt]&= \sigma \Big(\frac{\eta'}{(1+(h'_s)^2)^{3/2}} + \mathcal{R}(h'_s, \eta')\Big)  \partial_t \eta   \Big|^{1}_{-1}  - \int_{\mathcal{I}}\sigma \Big(\frac{\eta'}{(1+(h_s')^2)^{3/2}}+\mathcal{R}(h'_s, \eta')\Big)\partial_t\eta' dx
				\\[5pt]&=-(\partial_t \eta)^2(t,-1) -(\partial_t \eta)^2(t,1)-\frac{d}{dt}\int_{\mathcal{I}}\sigma\Big( \frac{(\eta')^2}{2(1+ (h'_s)^2)^{3/2}}  + \mathcal{Q}_0(h'_s, \eta')\Big)dx
			\end{aligned}
		\end{equation*} where in the second equality we have used \eqref{evo-contact-per} and the chain rule
		$$\mathcal{R}(h'_s, \eta')\partial_t\eta' =\partial_{z_2} \mathcal{Q}_0(h'_s,\eta')\partial_t \eta' = \partial_t (\mathcal{Q}_0(h'_s, \eta')).$$
	\end{proof}
	
	\begin{remark}
		Equivalently, Proposition \ref{prop-energy} can be proved starting from the energy equality \eqref{energy-eq}, expressed in terms of the free surface $h$. Indeed, it can be showed that
		\begin{equation*}
			E(h)=E(h_s) + \int_{\mathcal{I}}\Big(\frac{g}{2}\eta^2 + \frac{\sigma}{2} \frac{(\eta')^2}{(1+(h'_s)^2)^{3/2}}  +\sigma \mathcal{Q}_0(h'_s,\eta')\Big)dx.
		\end{equation*}Hence, \eqref{ene-eq-per} follows writing the integral over $\Omega(t)$ in \eqref{energy-eq} as an integral over $\Omega_s$ and using the time-independence of the stationary energy $E(h_s)$.
		\end{remark}
We point out that \eqref{ene-eq-per} offers, as much, a control of the $H^1$-norm of $\eta$. In order to propagate higher regularity we will  take derivatives on the equations and try to find an analogous version of \eqref{ene-eq-per} for them. Due to the presence of different boundaries and in order to maintain the structure of the equations, we apply differential operators that are tangential to the boundaries, which turn out to be only time derivatives in our context.\\
	We then need to study time-differentiated versions of the problem  \eqref{evo-eq-per}-\eqref{DNpb-per}. Applying one time-derivative, we find that $(\partial_t \Phi, \partial_t \eta)$ solves the evolution equation
		\begin{equation}\label{evo-eq-per1}
			\partial_t^2 \eta = \Sigma_\eta \nabla\partial_t\Phi \cdot N_h + F^1_1 \quad \mbox{on} \quad \Gamma
		\end{equation}
		coupled with the mixed elliptic problem
		\begin{equation}\label{DNpb-per1}
			\begin{aligned}
				\nabla \cdot (A_\eta\nabla \partial_t\Phi)=F^1_2 \quad &\mbox{in} \quad \Omega_s,\\[5pt]
				\partial_t\Phi= -g\partial_t \eta + \sigma \left(\frac{\partial_t \eta'}{(1+(h'_s)^2)^{3/2}} + \partial_t (\mathcal{R}(h'_s, \eta'))\right)' \quad &\mbox{on} \quad \Gamma,\\[5pt]
				\Sigma_\eta \nabla \partial_t\Phi \cdot n=F^1_3 \quad &\mbox{on} \quad \Gamma_w,
			\end{aligned}
		\end{equation}with\begin{equation}\label{F1}
			\begin{aligned}
				F^1_1 =\partial_t (\Sigma_\eta^T N_h)\cdot \nabla \Phi, \qquad
				F^1_2=-\nabla \cdot (\partial_t A_\eta \nabla \Phi), \qquad
				F^1_3=- \partial_t\Sigma_\eta\nabla \Phi \cdot
				n .
			\end{aligned}
		\end{equation}The system is completed by the evolution equation for the contact points
		\begin{equation}\label{evo-contact-per1}
			\partial_t^2\eta (t,\pm 1) =  \mp\sigma \left(\frac{\partial_t \eta'}{(1+(h'_s)^2)^{3/2}} + \partial_t (\mathcal{R}(h'_s, \eta'))\right)(t, \pm1).
		\end{equation}
We will derive an energy-dissipation equality for the time-differentiated problem in the same fashion as for \eqref{ene-eq-per}. However, we will face a structural difficulty that prevents to control the new nonlinear terms due to the lack of regularity. Then, we are led to apply second-order time-derivatives to \eqref{evo-eq-per}-\eqref{DNpb-per} and study the twice time-differentiated problem. We find that  $(\partial_t^2\Phi, \partial_t^2 \eta)$ solves the evolution equation
		\begin{equation}\label{evo-eq-per2}
			\partial_t^3 \eta = \Sigma_\eta \nabla\partial^2_t\Phi \cdot N_h + F^2_1 \quad \mbox{on} \quad \Gamma
		\end{equation}
		coupled with the mixed elliptic problem
		\begin{equation}\label{DNpb-per2}
			\begin{aligned}
				\nabla \cdot (A_\eta\nabla \partial^2_t\Phi)= F^2_2 \quad &\mbox{in} \quad \Omega_s,\\[5pt]
				\partial_t^2\Phi= -g\partial_t^2 \eta + \sigma  \left(\frac{\partial_t^2 \eta'}{(1+(h'_s)^2)^{3/2}} + \partial_t^2 (\mathcal{R}(h'_s, \eta'))\right)' \quad &\mbox{on} \quad \Gamma,\\[5pt]
				\Sigma_\eta \nabla \partial^2_t\Phi \cdot n=F^2_3   \quad &\mbox{on} \quad \Gamma_w,
			\end{aligned}
		\end{equation}where
		\begin{equation}\label{F2}
			\begin{aligned}
				&F^2_1=2\partial_t (\Sigma_\eta^TN_h)\cdot  \nabla \partial_t\Phi +  \partial_t^2(\Sigma_\eta^T N_h)\cdot \nabla \Phi, \\[5pt]&
				F^2_2=-\nabla \cdot (2\partial_t A_\eta \nabla \partial_t\Phi +\partial_t^2A_\eta \nabla \Phi ),\\[5pt]&
				F^2_3=- \left(\partial_t\Sigma_\eta\nabla \partial_t\Phi  + \partial_t^2\Sigma_\eta\nabla\Phi \right)\cdot n,
			\end{aligned}
		\end{equation}
	completed by the evolution equation for the contact points
		\begin{equation}\label{evo-contact-per2}
			\partial_t^3\eta (t,\pm1 ) =  \mp\sigma  \left(\frac{\partial_t^2 \eta'}{(1+(h'_s)^2)^{3/2}} + \partial_t^2 (\mathcal{R}(h'_s, \eta'))\right)(t, \pm 1).
	\end{equation}
		
	We then derive the energy-dissipation equalities associated with the once and the twice time-differentiated problems.

		\begin{proposition}\label{prop-higherenergy}
			Let $h_s$ be given by Proposition \ref{sta-exiuni}, let  $(\partial_t^j\Phi,\partial_t^j\eta)$ be a regular solution to \eqref{evo-eq-per}-\eqref{evo-contact-per},\eqref{evo-eq-per1}-\eqref{evo-contact-per1} and \eqref{evo-eq-per2}-\eqref{evo-contact-per2}, respectively, for $j=0,1,2$. Then, the following energy-dissipation equality holds:
			\begin{equation}\label{highene-eq-per}
				\begin{aligned}
					&\frac{d}{dt}\int_{\mathcal{I}} \Big(\frac{g}{2}(\partial_t^j \eta)^2 + \frac{\sigma}{2}\frac{ (\partial_t^j\eta')^2}{(1+ (h'_s)^2)^{3/2}} + \sigma \mathcal{Q}_j(h'_s,\eta')\Big)dx\\[5pt]& + \int_{\Omega_s}\det(J_\eta)|\Sigma_\eta\nabla \partial_t^j \Phi|^2 + (\partial_t^{j+1} \eta)^2(t,-1) + (\partial_t^{j+1} \eta)^2(t,1)= \mathcal{S}_j,
				\end{aligned}
			\end{equation}	where the source terms are $\mathcal{S}_0=0$ and
			\begin{equation}\label{Sj}
			\begin{aligned}
				\mathcal{S}_j=& - \int_{\Omega_s}F_2^j \partial_t^j \Phi  + \int_{\Gamma_w} \det(J_\eta)F_3^j\partial_t^j\Phi\\[5pt]&-\int_{\Gamma} \frac{
					1}{|N_{h_s}|}F_1^j \partial_t^j \Phi
				+ \int_{\mathcal{I}} \sigma\mathcal{F}_j(h'_s,\eta')dx,  \qquad j=1,2
			\end{aligned}
			\end{equation} with

            \begin{equation}\label{Fj}
			\begin{aligned}
		\mathcal{F}_1(h'_s,\eta')&= \frac{(\partial_t \eta')^3}{2}\partial^2_{z_2}\mathcal{R} (h'_s,\eta'),\\
                \mathcal{F}_2(h'_s,\eta')&= \frac{5}{2} (\partial_t^2\eta')^2\partial_t\eta'\partial^2_{z_2}\mathcal{R}(h'_s,\eta')+ \partial_t^2 \eta'(\partial_t \eta')^3\partial^3_{z_2}\mathcal{R} (h'_s,\eta').
			\end{aligned}
			\end{equation}
			The residual energies are given by
			\begin{equation*}\begin{aligned}
		\mathcal{Q}_0(h'_s,\eta')&=\int_0^{\eta'}\mathcal{R}(h'_s, z)dz, \qquad \mathcal{Q}_1(h'_s,\eta')= \frac{(\partial_t\eta')^2}{2}\partial_{z_2}\mathcal{R}(h'_s,\eta'), \\[5pt]
				 \mathcal{Q}_2(h'_s,\eta')&= \frac{(\partial_t^2\eta')^2}{2}\partial_{z_2}\mathcal{R}(h'_s,\eta') + \partial_t^2\eta' (\partial_t\eta')^2\partial^2_{z_2}\mathcal{R}(h'_s,\eta').
		\end{aligned}
			\end{equation*}
		
		\end{proposition}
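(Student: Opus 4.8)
The strategy is to mimic exactly the argument used for Proposition~\ref{prop-energy} (the case $j=0$), but now applied to the $j$-times time-differentiated systems \eqref{evo-eq-per1}--\eqref{evo-contact-per1} (for $j=1$) and \eqref{evo-eq-per2}--\eqref{evo-contact-per2} (for $j=2$), keeping careful track of the extra source terms $F_1^j$, $F_2^j$, $F_3^j$ that appear because the coefficient matrices $A_\eta$, $\Sigma_\eta$ and the normal $N_h$ depend on $\eta$ and hence on $t$. First I would run the analogue of the Green's identity \eqref{green-Phi}: multiply the bulk equation $\nabla\cdot(A_\eta\nabla\partial_t^j\Phi)=F_2^j$ by $\partial_t^j\Phi$, integrate over $\Omega_s$, and integrate by parts. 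This produces $-\int_{\Omega_s}\det(J_\eta)|\Sigma_\eta\nabla\partial_t^j\Phi|^2$, a boundary term on $\Gamma_w$ which is now $\int_{\Gamma_w}\det(J_\eta)F_3^j\,\partial_t^j\Phi$ rather than zero (because the Neumann trace is $F_3^j$, not $0$), a boundary term on $\Gamma$ of the form $\int_\Gamma \Sigma_\eta\nabla\partial_t^j\Phi\cdot\frac{N_h}{|N_{h_s}|}\partial_t^j\Phi$ using the same identity $\det(J_\eta)\Sigma_\eta\frac{N_{h_s}}{|N_{h_s}|}=\frac{N_h}{|N_{h_s}|}$ on $\Gamma$, and a term $-\int_\Omega F_2^j\partial_t^j\Phi$ from the right-hand side.

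**Main computation.** Next I would take the $L^2(\mathcal{I})$-scalar product of the evolution equation \eqref{evo-eq-per1} (resp. \eqref{evo-eq-per2}) — written as $\partial_t^{j+1}\eta=\Sigma_\eta\nabla\partial_t^j\Phi\cdot N_h+F_1^j$ on $\Gamma$ — with $g\,\partial_t^j\eta$. On the left this yields $\frac{d}{dt}\int_\mathcal{I}\frac g2(\partial_t^j\eta)^2$. On the right, the term $\int_\Gamma (\Sigma_\eta\nabla\partial_t^j\Phi\cdot N_h)\,g\partial_t^j\eta\,\frac{dx}{|N_{h_s}|}\cdot|N_{h_s}|$ is rewritten: using the Dirichlet trace of $\partial_t^j\Phi$ from \eqref{DNpb-per1} (resp.~\eqref{DNpb-per2}), which equals $-g\partial_t^j\eta+\sigma\big(\tfrac{\partial_t^j\eta'}{(1+(h_s')^2)^{3/2}}+\partial_t^j(\mathcal{R}(h_s',\eta'))\big)'$, one splits it into the Green's-identity contribution (giving $-\int_{\Omega_s}\det(J_\eta)|\Sigma_\eta\nabla\partial_t^j\Phi|^2$ plus the $\Gamma_w$ and $F_2^j$ terms) and the surface-tension term $\int_\mathcal{I}\sigma\big(\tfrac{\partial_t^j\eta'}{(1+(h_s')^2)^{3/2}}+\partial_t^j(\mathcal{R}(h_s',\eta'))\big)'\partial_t^{j}\eta\,dx$ — wait, more precisely tested against $\partial_t(\partial_t^j\eta)=\partial_t^{j+1}\eta$; one must be careful that the right structure is $g\partial_t^j\eta$ paired against $\partial_t^{j+1}\eta$. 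The $F_1^j$ contribution produces the term $-\int_\Gamma\frac{1}{|N_{h_s}|}F_1^j\,\partial_t^j\Phi$ after using the Dirichlet trace again. Then, exactly as in Proposition~\ref{prop-energy}, integrate by parts in $x$ the surface-tension term: the boundary values at $x=\pm1$ are evaluated using the contact-point law \eqref{evo-contact-per1} (resp.~\eqref{evo-contact-per2}), producing $-(\partial_t^{j+1}\eta)^2(t,-1)-(\partial_t^{j+1}\eta)^2(t,1)$, while the interior part contributes $-\frac{d}{dt}\int_\mathcal{I}\sigma\big(\tfrac{(\partial_t^j\eta')^2}{2(1+(h_s')^2)^{3/2}}+\mathcal{Q}_j\big)dx$ plus a leftover $\int_\mathcal{I}\sigma\mathcal{F}_j\,dx$.

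**The delicate point: identifying $\mathcal{Q}_j$ and $\mathcal{F}_j$.** The one genuinely non-mechanical step is the chain-rule bookkeeping that isolates the exact-derivative part of $\sigma\,\partial_t^j(\mathcal{R}(h_s',\eta'))\cdot\partial_t^{j+1}\eta'$ and a remainder. For $j=0$ this is just $\mathcal{R}(h_s',\eta')\partial_t\eta'=\partial_t\mathcal{Q}_0$. For $j=1$ one has $\partial_t(\mathcal{R}(h_s',\eta'))=\partial_{z_2}\mathcal{R}(h_s',\eta')\,\partial_t\eta'$, so $\partial_t(\mathcal{R})\cdot\partial_t^2\eta' = \partial_{z_2}\mathcal{R}\,\partial_t\eta'\,\partial_t^2\eta'$, and one writes $\partial_{z_2}\mathcal{R}\,\partial_t\eta'\,\partial_t^2\eta' = \partial_t\big(\tfrac12\partial_{z_2}\mathcal{R}\,(\partial_t\eta')^2\big) - \tfrac12(\partial_t\eta')^2\,\partial_t(\partial_{z_2}\mathcal{R})$, and since $\partial_t(\partial_{z_2}\mathcal{R})=\partial_{z_2}^2\mathcal{R}\,\partial_t\eta'$ this gives precisely $\mathcal{Q}_1$ and $\mathcal{F}_1$ as stated in \eqref{Fj}. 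The $j=2$ case is the same manipulation carried out one order higher, using $\partial_t^2(\mathcal{R}) = \partial_{z_2}^2\mathcal{R}(\partial_t\eta')^2+\partial_{z_2}\mathcal{R}\,\partial_t^2\eta'$ and regrouping so that the non-exact remainder matches $\mathcal{F}_2$. I expect this combinatorial identification to be the main (though still elementary) obstacle: one must be meticulous that every term not absorbed into $\frac{d}{dt}\mathcal{Q}_j$ is accounted for in $\mathcal{F}_j$, with the stated numerical coefficients $\tfrac12$, $\tfrac52$. Assembling the Green's-identity output together with this integration-by-parts output, and collecting the four leftover pieces into $\mathcal{S}_j$ exactly as in \eqref{Sj}, yields \eqref{highene-eq-per}; the case $j=0$ recovers Proposition~\ref{prop-energy} with $\mathcal{S}_0=0$, which is consistent since $F_i^0\equiv0$ and $\mathcal{F}_0\equiv0$.
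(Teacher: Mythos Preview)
Your proposal is correct and follows essentially the same approach as the paper: pair the $j$-differentiated evolution equation with $g\partial_t^j\eta$, replace $g\partial_t^j\eta$ via the Dirichlet trace, apply the time-differentiated Green's identity to produce the bulk dissipation together with the $F_2^j$ and $F_3^j$ source integrals, integrate the surface-tension term by parts and use the contact-point law, and finally do the chain-rule bookkeeping to split off $\mathcal{Q}_j$ and $\mathcal{F}_j$. The ``wait'' moment in your main computation reflects a genuine subtlety the paper handles implicitly: the $F_1^j$ contribution appears twice (once from $\int_{\mathcal I} g\partial_t^j\eta\,F_1^j$ and once from replacing $\Sigma_\eta\nabla\partial_t^j\Phi\cdot N_h=\partial_t^{j+1}\eta-F_1^j$ inside the surface-tension integral), and it is precisely the Dirichlet identity $g\partial_t^j\eta-\sigma(\cdots)'=-\partial_t^j\Phi$ that combines them into the single term $-\int_\Gamma\frac{1}{|N_{h_s}|}F_1^j\partial_t^j\Phi$ appearing in $\mathcal{S}_j$.
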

		\begin{proof}The case $j=0$ is exactly Proposition \ref{prop-energy}, while for $j=1,2$ we argue in the same fashion.
            More precisely, taking the $L^2(\mathcal{I})$-scalar product of \eqref{evo-eq-per1}, respectively \eqref{evo-eq-per2}, with $g\partial_t\eta$, respectively $g\partial_t^2\eta$, we obtain
			\begin{equation*}\begin{aligned}
					\frac{d}{dt}\int_{\mathcal{I}}&\frac{g}{2}(\partial_t^j \eta)^2 dx
		=-\int_{\Gamma}
					\partial_t^j\Phi\Sigma_\eta \nabla \partial_t^j \Phi \cdot \frac{N_h}{|N_{h_s}|}   + \int_{\Gamma} \frac{1}{|N_{h_s}|}g\partial_t^j \eta F_1^jdx\\[5pt]&\quad +\int_{\mathcal{I}} \sigma  \left(\frac{\partial_t^j \eta'}{(1+(h'_s)^2)^{3/2}} + \partial_t^j (\mathcal{R}(h'_s, \eta'))\right)' \left(\partial_t^{j+1}\eta - F^j_1\right)dx \\[5pt]
					&= -\int_{\Omega_s}\det(J_\eta)|\Sigma_\eta \nabla \partial_t^j\Phi|^2  - \int_{\Omega_s} F_2^j \partial_t^j \Phi  + \int_{\Gamma_w}\det(J_\eta)F_3^j\partial_t^j\Phi  \\[5pt]&\quad- \int_{\Gamma}\frac{1}{|N_{h_s}|} \partial_t^j \Phi F_1^j  +\int_{\mathcal{I}} \sigma \left(\frac{\partial_t^j \eta'}{(1+(h'_s)^2)^{3/2}} + \partial_t^j (\mathcal{R}(h'_s, \eta'))\right)' \partial_t^{j+1} \eta dx
				\end{aligned}
			\end{equation*}
			where in the last equality we have used time-differentiated versions of \eqref{green-Phi}. Integrating by parts and using the evolution equation for the contact points  \eqref{evo-contact-per1}, respectively \eqref{evo-contact-per2}, the second term in the last line can be written as
			\begin{equation*}
				\begin{aligned}
					& \int_{\mathcal{I}} \sigma \Big(\frac{\partial_t^j \eta'}{(1+(h'_s)^2)^{3/2}} + \partial_t^j (\mathcal{R}(h'_s, \eta'))\Big)' \partial_t^{j+1} \eta dx  \\[5pt]&=\sigma   \Big(\frac{\partial_t^j \eta'}{(1+(h'_s)^2)^{3/2}} + \partial_t^j (\mathcal{R}(h'_s, \eta'))\Big)\partial_t^{j+1} \eta \Big|^{x=1}_{x=-1}  \\[5pt]&\quad -\int_{\mathcal{I}} \sigma \Big(\frac{\partial_t^j \eta'}{(1+(h'_s)^2)^{3/2}} + \partial_t^j (\mathcal{R}(h'_s, \eta'))\Big) \partial_t^{j+1} \eta' dx\\[5pt]
					&= -(\partial_t^{j+1}\eta)^2(t,-1) -(\partial_t^{j+1}\eta)^2(t,1) - \frac{d}{dt}\int_{\mathcal{I}} \frac{\sigma}{2} \frac{(\partial_t^j\eta')^2}{(1+ (h'_s)^2)^{3/2}}dx \\[5pt]&\quad - \int_{\mathcal{I}}\sigma\partial_t^{j+1}\eta'\partial_t^j  (\mathcal{R}(h'_s,\eta') )dx.
				\end{aligned}
			\end{equation*}
		We show that the last integral can be written as the sum of a time-derivative and other terms. To do that,  we shall distinguish the cases $j=1$ and $j=2$. When $j=1$, using the chain rule yields
			\begin{equation*}
				\begin{aligned}
				\int_{\mathcal{I}}&\sigma \partial_t^2 \eta'\partial_t(\mathcal{R}(h'_s,\eta'))dx=\int_{\mathcal{I}} \frac\sigma2 \partial_t(\partial_t \eta')^2 \partial_{z_2}\mathcal{R}(h'_s,\eta')dx \\[5pt]&= \frac{d}{dt}\int_{\mathcal{I}}\frac\sigma2 (\partial_t \eta')^2\partial_{z_2}\mathcal{R}(h'_s,\eta')dx -  \int_{\mathcal{I}}\frac\sigma2 (\partial_t \eta')^3\partial^2_{z_2}\mathcal{R}(h'_s,\eta')dx.
				\end{aligned}
			\end{equation*}
			When $j=2$, we have
			\begin{equation}\label{term-R2}
				\begin{aligned}
					\int_{\mathcal{I}}\sigma \partial_t^3 \eta'\partial_t^2(\mathcal{R}(h'_s,\eta'))dx=&\int_{\mathcal{I}} \sigma \partial_t^3 \eta' \partial^2_{z_2}\mathcal{R} (h'_s,\eta')(\partial_t\eta')^2 dx\\
                    &+\int_{\mathcal{I}} \sigma\partial_t^3 \eta' \partial_{z_2}\mathcal{R}(h'_s,\eta')\partial_t^2 \eta'dx =I +II.
				\end{aligned}
			\end{equation}
		We write the first term as
			\begin{equation*}
				\begin{aligned}
					I= \ & \frac{d}{dt}\int_{\mathcal{I}} \sigma \partial_t^2 \eta' (\partial_t\eta')^2\partial^2_{z_2}\mathcal{R} (h'_s,\eta')dx\\[5pt]&- 2\int_{\mathcal{I}}\sigma (\partial_t^2 \eta')^2\partial_t \eta' \partial^2_{z_2}\mathcal{R}(h'_s,\eta')dx  -\int_{\mathcal{I}} \sigma \partial_t^2\eta' (\partial_t \eta')^3\partial^3_{z_2}\mathcal{R} (h'_s,\eta')dx,
				\end{aligned}
			\end{equation*}
			and, using again the chain rule, the second term as
			\begin{equation*}
				\begin{aligned}
					II&=\int_{\mathcal{I}} \frac\sigma2 \partial_t(\partial_t^2 \eta')^2 \partial_{z_2}\mathcal{R}(h'_s,\eta')dx \\&= \frac{d}{dt}\int_{\mathcal{I}}\frac\sigma2 (\partial_t^2 \eta')^2\partial_{z_2}\mathcal{R}(h'_s,\eta')dx -  \int_{\mathcal{I}}\frac\sigma2(\partial_t^2 \eta')^2\partial^2_{z_2}\mathcal{R}(h'_s,\eta')\partial_t\eta' dx.
				\end{aligned}
			\end{equation*} Thus, the left-hand side in \eqref{term-R2} reads
			\begin{equation*}
				\begin{aligned}
					\int_{\mathcal{I}}&\sigma \partial_t^3 \eta' \partial_t^2(\mathcal{R}(h'_s,\eta'))dx \\&= \frac{d}{dt}\int_{\mathcal{I}}\sigma \Big(\frac12(\partial_t^2\eta')^2 \partial_{z_2}\mathcal{R}(h'_s,\eta') + \partial_t^2\eta' (\partial_t \eta')^2\partial^2_{z_2}\mathcal{R}(h'_s,\eta')\Big)dx\\[5pt]&\qquad
					- \int_{\mathcal{I}} \sigma \Big( \frac{5}{2}(\partial_t^2\eta')^2\partial_t\eta' \partial^2_{z_2}\mathcal{R}(h'_s,\eta') + \partial_t^2\eta' (\partial_t\eta')^3\partial^3_{z_2}\mathcal{R}(h'_s,\eta')\Big)dx.
				\end{aligned}
			\end{equation*}
			Gathering all together and introducing the residual energies $\mathcal{Q}_j(h'_s, \eta')$ for $j=1,2$, we obtain \eqref{highene-eq-per}.
		\end{proof}
		
		\section{Energies and dissipations} \label{sec-enediss}In this section we address the different energies and dissipations of the problem we are studying. Before introducing their definitions, we recall the method of improved energy and dissipation used in \cite{GuoTice2018} that permits to close a scheme of a priori estimates when the structure of the nonlinearities does not directly allow it. More precisely, let us consider a system of PDEs that admits an energy-dissipation equality of the form
	\begin{equation}\label{EDeq}
		\frac{d}{dt}\mathcal{E}_\parallel (t)+ \mathcal{D}_\parallel (t)= \mathcal{N}(t)
	\end{equation}with some basic energy $\mathcal{E}_\parallel\geq 0$ and some basic dissipation $\mathcal{D}_\parallel\geq 0$. We assume that the nonlinearity $\mathcal{N}$ has a structure that prevents a bound of the type
	\begin{equation*}
		|\mathcal{N}(t)|\leq C \mathcal{E}_\parallel^\theta(t)\mathcal{D}_\parallel (t)\quad \text{with} \quad \theta>0,
	\end{equation*} which would be necessary to obtain a standard parabolic a priori estimate. The impossibility to work directly with the basic energy and dissipation requires the search of an improved energy $\mathcal{E} (t)= \mathcal{E}_\parallel (t)+\mathcal{E}_\perp(t)$ and an improved dissipation $\mathcal{D}(t)= \mathcal{D}_\parallel(t)+ \mathcal{D}_\perp(t)$ for which the analogous bound can be obtained, that is,
	\begin{equation}\label{control-N}
		|\mathcal{N}(t)|\leq C \mathcal{E}^\theta(t) \mathcal{D}(t)\quad \text{with} \quad \theta>0,
	\end{equation}and such that the additional energy and  dissipation  $\mathcal{E}_\perp$ and $\mathcal{D}_\perp$ verify the bounds
	\begin{equation}\label{prop-addED}
		\frac{d}{dt} \mathcal{E}_\perp (t)\leq C\mathcal{D}(t) \qquad \text{and} \qquad	\mathcal{D}_\perp(t) \leq C \left( \mathcal{D} _\parallel (t)+ \mathcal{E}^\theta(t) \mathcal{D}(t)\right).
	\end{equation}Indeed, the basic energy-dissipation equality \eqref{EDeq} combined with  \eqref{prop-addED} and the control \eqref{control-N}  imply that \begin{equation*}
 \frac{d}{dt}\mathcal{E}_\parallel(t)+ C \mathcal{D}(t) \leq C \mathcal{E}^\theta (t)\mathcal{D}(t).
	\end{equation*} Consequently, assuming $\mathcal{E}(t)$ sufficiently small and integrating in time then yields the desired parabolic a priori estimate
	\begin{equation}\label{apriori-EDest}
	  \mathcal{E}(t) + \int_0^t\mathcal{D}(s)ds \leq C \mathcal{E}(0).
	\end{equation}
	The crucial point of this improved energy-dissipation method is to find suitable additional energy $\mathcal{E}_\perp$ and dissipation $\mathcal{D}_\perp$ that obey the structure \eqref{prop-addED}. Note that the first bound in \eqref{prop-addED} can be reformulated in the case when $\mathcal{E}_\perp$ is a finite sum of squares of Hilbert norms. Indeed, given a Hilbert space $\mathcal{H}$ endowed with the norm $\|\cdot\|_\mathcal{H}$ and the scalar product $(\cdot, \cdot)_\mathcal{H}$,  Cauchy-Swartz and Young's inequalities yield the bound
	\begin{equation*}
		\frac{d}{dt}  \|f(t)\|^2_\mathcal{H} =2(f(t), \partial_t f (t))_\mathcal{H} \leq \|f (t)\|^2_\mathcal{H} + \|\partial_t f (t)\|^2_\mathcal{H}.
	\end{equation*} Therefore, we can replace  the first bound in \eqref{prop-addED} by
 \begin{equation*}
     \|f (t)\|^2_\mathcal{H} + \|\partial_t f (t)\|^2_\mathcal{H} \leq C \mathcal{D}(t).
 \end{equation*} One way to verify the previous bound is to consider an improved dissipation that contains both $\mathcal{E}_\perp$ and its time-derivative analogous. Nevertheless, in the case that these terms are not already contained in the basic dissipation $\mathcal{D}_\parallel$, they have to verify the second bound in \eqref{prop-addED}.\\\\
The goal of this paper is to derive an estimate of the type \eqref{apriori-EDest}. As a first step, we reformulate the energy-dissipation equations, obtained in the previous section, in the form \eqref{EDeq}. To this end, we introduce the basic energy
	\begin{equation}\label{basic-en}
		\mathcal{E}_\parallel(t)= \sum_{j=0}^{2}\|\partial_t^j \eta(t)\|^2_{H^1(\mathcal{I})}
	\end{equation}
	and the basic dissipation
	\begin{equation}\label{basic-dis}
		{\mathcal{D}}_\parallel(t)= \sum_{j=0}^{2}\left(\| \nabla \partial_t^j \Phi(t)\|^2_{L^2(\Omega_s)}  + (\partial_t^{j+1}\eta)^2 (t,-1)+ (\partial_t^{j+1}\eta)^2 (t,1)\right).
	\end{equation}
	Note that the norms in these basic energy and dissipation are not exactly the ones appearing in \eqref{ene-eq-per} and \eqref{highene-eq-per}. We will show later that the two formulations are equivalent, allowing us to work directly with the quantities in \eqref{basic-en}-\eqref{basic-dis}. From Proposition \ref{prop-higherenergy} we know that
	\begin{equation*}
		\frac{d}{dt}\mathcal{E}_\parallel (t)+ {\mathcal{D}}_\parallel(t) =\mathcal{S}_1(t) + \mathcal{S}_2(t)
	\end{equation*}with $\mathcal{S}_j$ as in \eqref{Sj}. The structure of these terms will require a higher regularity than the one provided at the basic level. Our strategy is to bootstrap from the control of $\mathcal{E}_\parallel $ and $\mathcal{D}_\parallel$ to higher spatial regularity by employing elliptic estimates, derived in Section \ref{sec-ellest}, necessary to close the scheme of a priori estimates. This extra control is encoded in the improved energy
	\begin{equation}\label{impro-ene}
		\mathcal{E}(t)= \mathcal{E}_\parallel(t) + \mathcal{E}_\perp(t)= \mathcal{E}_\parallel(t) + \|\eta (t)\|^2_{H^{3/2+}(\mathcal{I})} + \|\partial_t \eta (t)\|^2_{H^{3/2+}(\mathcal{I})}
	\end{equation} and in the improved dissipation
	\begin{equation}\label{tot-diss}\begin{aligned}
		\mathcal{D}(t)= \mathcal{D}_\parallel(t) + \mathcal{D}_\perp(t)= \ &\mathcal{D}_\parallel (t)+ \sum_{j=0}^{1} \| \partial_t^j\eta (t)\|^2_{H^{5/2} (\mathcal{I})}+\| \partial_t^2\eta (t)\|^2_{H^{3/2+} (\mathcal{I})} \\[2pt]&+\sum_{j=0}^{2}\| \partial_t^j \Phi(t)\|^2_{L^2(\Omega_s)}+  \| \partial_t\Phi (t)\|^2_{\mathring{H}^2(\Omega_s)} ,\\[5pt]
        \end{aligned}
	\end{equation}
	where $\|\partial_t \Phi\|_{\mathring{H}^2(\Omega_s)}= \|\nabla \partial_t \Phi\|_{H^1(\Omega_s)}$.

\section{Additional dissipation}\label{sec-adddiss}

\subsection{Control of the $L^2$-norm of $ \partial_t^j \Phi$}\label{subsec-L2pot}
The energy-dissipation equalities derived in Section \ref{sec-ED} show that using the equations we can control only the transformed Dirichlet norm of the velocity potential but not its $L^2$-norm. Although the latter is finite since we are dealing with a bounded domain, we cannot apply Poincaré inequality to control it because neither the potential vanish on the boundary $\partial \Omega_s$ nor it has zero mean over $\Omega_s$. Differently from the Stokes case studied in \cite{GuoTice2018}, here we do not have a condition on the tangential component of the potential on the boundary that gives an additional term in the basic dissipation and allows to obtain the desired control. Nevertheless, we are able to control the $L^2$-norm of the potential with the entire basic dissipation $\mathcal{D}_\parallel$ by estimating the mean of the potential in the fluid-domain  using the Dirichlet boundary condition at $\Gamma$ and resorting to a crucial homogeneous trace inequality. This type of trace inequality has been recently studied in \cite{LeoTice19} for infinite strip-like domains and in \cite{LanMin24} for the floating structures problem with a bounded or unbounded fluid domain. The authors showed that the range of the trace mapping defined on the homogeneous space $\dot{H}^1$  do not coincide in general with the homogeneous fractional space $\dot{H}^{1/2}$ but depends on whether the boundary is bounded or not. Since $\partial\Omega_s$ is bounded, the answer is positive. More precisely, let us consider the homogeneous fractional Sobolev space
\begin{align*}
\dot{H}^{s}(\Gamma)=\left\{ f\in L^1(\Gamma) \ | \ \|f\|_{\dot{H}^s(\Gamma)}<\infty\right\},
\end{align*}
where
\begin{align*}
    \|f\|_{\dot{H}^s(\Gamma)}^2&=\int_{\Gamma}\int_{\Gamma}\frac{|f(\hat{x})-f(\hat{y})|^2}{|\hat{x}-\hat{y}|^{1+2s}}d\sigma(\hat{y})d\sigma(\hat{x})\\[5pt]&\sim\int_{\mathcal{I}}\int_{\mathcal{I}}\frac{|f(x,h_s(x))-f(x',h_s(x'))|^2}{|x-x'|^{1+2s}}dx'dx= \|f(\cdot,h_s(\cdot))\|^2_{\dot{H}^{s}(\mathcal{I})}
\end{align*}
and $s\in (0,1)$. For functions $\Phi$ defined on $\Omega_s$, we use the notation $$\|\Phi\|_{\dot{H}^{s}(\Gamma)}=\|\Phi_{|_\Gamma}\|_{\dot{H}^{s}(\Gamma)},$$
with a slight abuse of notation. Here $\Phi_{|_{\Gamma}}$ denotes the trace of $\Phi$ on $\Gamma$.
 We then have the following continuity result for the trace mapping on $\Gamma$:
\begin{proposition}\label{prop-homotrace}
 There exists a constant $C>0$, depending only on $h_s$, such that
\begin{equation*}
\|\Phi\|_{\dot{H}^{1/2}(\Gamma)} \leq C \| \nabla \Phi\|_{L^2(\Omega_s)}.
\end{equation*}
In addition, the trace mapping is onto and admits a right inverse.
\end{proposition}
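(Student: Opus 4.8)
The plan is to deduce the inequality from the classical inhomogeneous trace theorem on Lipschitz domains combined with the Poincaré--Wirtinger inequality, exploiting crucially that $\Omega_s$ is \emph{bounded}. First I would observe that, since $\Omega_s$ is a bounded connected Lipschitz domain (only two genuine corners, smooth elsewhere, no cusps thanks to $\llbracket\gamma\rrbracket/\sigma\in(-1,1)$), a distribution with $L^2$ gradient on $\Omega_s$ automatically lies in $H^1(\Omega_s)$, so we may assume $\Phi\in H^1(\Omega_s)$ at no cost, and Poincaré--Wirtinger gives $\|\Phi-\overline{\Phi}\|_{L^2(\Omega_s)}\le C\|\nabla\Phi\|_{L^2(\Omega_s)}$, where $\overline\Phi$ is the mean of $\Phi$ over $\Omega_s$. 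Both sides of the asserted inequality are invariant under adding a constant to $\Phi$ --- the left-hand side because the Gagliardo seminorm annihilates constants, the right-hand side trivially --- so it suffices to prove the bound for $\Phi-\overline\Phi$.

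Next I would invoke the standard trace theorem $\mathrm{Tr}\colon H^1(\Omega_s)\to H^{1/2}(\partial\Omega_s)$, bounded on Lipschitz domains, followed by restriction to the boundary arc $\Gamma$ and pull-back by the smooth bi-Lipschitz parametrization $x\mapsto(x,h_s(x))$; since $h_s\in C^\infty(\overline{\mathcal I})$, this composition maps $H^1(\Omega_s)$ boundedly into $H^{1/2}(\mathcal I)$, so that $\|\Phi\|_{\dot H^{1/2}(\Gamma)}\le\|\Phi(\cdot,h_s(\cdot))\|_{H^{1/2}(\mathcal I)}\le C\|\Phi\|_{H^1(\Omega_s)}$. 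Applying this to $\Phi-\overline\Phi$ and using the Poincaré--Wirtinger bound,
$$
\|\Phi\|_{\dot H^{1/2}(\Gamma)}=\|\Phi-\overline\Phi\|_{\dot H^{1/2}(\Gamma)}\le C\|\Phi-\overline\Phi\|_{H^1(\Omega_s)}\le C\|\nabla\Phi\|_{L^2(\Omega_s)},
$$
which is the claimed estimate.

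For surjectivity and the right inverse, let $g\in\dot H^{1/2}(\Gamma)$, i.e. $g\in L^1(\Gamma)$ with $g(\cdot,h_s(\cdot))\in\dot H^{1/2}(\mathcal I)$. The place where boundedness of $\Gamma$ enters is the elementary fact that on the bounded interval $\mathcal I$ a function in $L^1$ with finite Gagliardo $\dot H^{1/2}$-seminorm is automatically in $L^2$: writing $\overline g$ for its mean over $\mathcal I$, Jensen's inequality and $|x-x'|\le|\mathcal I|$ give $\|g-\overline g\|_{L^2(\mathcal I)}^2\le|\mathcal I|\,\|g\|_{\dot H^{1/2}(\mathcal I)}^2$, hence $g-\overline g\in H^{1/2}(\mathcal I)$ with norm controlled by $\|g\|_{\dot H^{1/2}(\Gamma)}$. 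Transporting this to $\Gamma$, I would then extend $g-\overline g$ from the arc $\Gamma$ to all of $\partial\Omega_s$ by a standard $H^{1/2}$-extension across each of the two corners (licit since $\partial\Omega_s$ is Lipschitz and $\Gamma$ is one face of the wedge at each corner), apply the bounded right inverse of the full trace operator to get $\Psi\in H^1(\Omega_s)$ with $\Psi|_\Gamma=g-\overline g$ and $\|\Psi\|_{H^1(\Omega_s)}\le C\|g\|_{\dot H^{1/2}(\Gamma)}$, and finally set $\Phi:=\Psi+\overline g$, so that $\Phi|_\Gamma=g$ and $\|\nabla\Phi\|_{L^2(\Omega_s)}=\|\nabla\Psi\|_{L^2(\Omega_s)}\le C\|g\|_{\dot H^{1/2}(\Gamma)}$. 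Since all operations are linear, this is a bounded linear right inverse.

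The main obstacle is not conceptual but bookkeeping at the two corner points of $\partial\Omega_s$: one must check that the inhomogeneous trace theorem, the restriction $H^{1/2}(\partial\Omega_s)\to H^{1/2}(\Gamma)$, and the $H^{1/2}$-extension along $\partial\Omega_s$ across the corners are all valid, which is covered by classical trace theory on Lipschitz domains once one knows $\Omega_s$ is Lipschitz with genuine angles. I would also emphasize that the only steps genuinely using \emph{boundedness} --- and the reason the clean statement can fail on unbounded domains, cf.\ \cite{LeoTice19,LanMin24} --- are the Poincaré--Wirtinger step and the $L^1\cap\dot H^{1/2}\subset L^2$ step above, both of which rely on the finite measure of $\Omega_s$ and of $\Gamma$.
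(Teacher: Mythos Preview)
Your argument is correct and takes a genuinely different route from the paper's. The paper flattens the upper boundary via the change of variables $\widetilde{\Phi}(x,y)=\Phi(x,h_s(x)-y)$, which sends $\Gamma$ to a horizontal segment, and then appeals directly to \cite[Theorem~1]{LanMin24} for the flat case; both the inequality and the surjectivity/right inverse are inherited from that reference through the smooth change of coordinates. You instead stay in $\Omega_s$ and reduce the homogeneous statement to the classical inhomogeneous trace theorem by subtracting the mean: Poincar\'e--Wirtinger turns $\|\Phi-\overline{\Phi}\|_{H^1}$ into $\|\nabla\Phi\|_{L^2}$, and constants are invisible to the Gagliardo seminorm. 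For surjectivity you run the same idea backwards, using the elementary bound $\|g-\overline g\|_{L^2(\mathcal I)}^2\le |\mathcal I|\,\|g\|_{\dot H^{1/2}(\mathcal I)}^2$ (which, incidentally, the paper itself uses later in the proof of Proposition~\ref{prop-controlL2}) to upgrade $\dot H^{1/2}$ data to $H^{1/2}$ data and then invoking the inhomogeneous right inverse. Your approach is more self-contained --- it does not need the recent \cite{LanMin24} and makes explicit where boundedness is spent --- while the paper's is shorter given the cited black box. The corner bookkeeping you flag for the $H^{1/2}$-extension from $\Gamma$ to $\partial\Omega_s$ is indeed routine on Lipschitz polygons (e.g.\ Grisvard's trace theory).
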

\begin{proof}
   The proof follows from \cite[Theorem 1]{LanMin24} and the smoothness of the stationary surface $h_s$. Indeed, in \cite{LanMin24} the authors derived the homogeneous trace inequality in the case when the boundary is a part of the horizontal line $\{y=0\}$. We then introduce the function $\widetilde{\Phi} (x,y)=\Phi(x, h_s(x)-y)$, which is defined on a suitable domain $\widetilde{\Omega}$ contained in the upper half-space. Applying the theorem at the boundary $\Gamma_h=\{x\in \mathcal{I},\  y=0\}$ implies that
   \begin{equation*}
\|\Phi\|_{\dot{H}^{1/2}(\Gamma)}=\|\widetilde{\Phi}\|_{\dot{H}^{1/2}(\Gamma_h)} \leq C \| \nabla \widetilde{\Phi}\|_{L^2(\widetilde{\Omega})} \leq C\left(\|h_s\|_{W^{1,\infty}(\mathcal{I})}\right) \| \nabla \Phi\|_{L^2(\Omega_s)},
   \end{equation*}
   where in the last inequality we have used the chain rule.
\end{proof}
\begin{remark}Although this result is analogous to its non-homogeneous counterpart, it ceases to hold when dealing with unbounded boundaries. Indeed, in this case, the range of the trace operator defined on $\dot{H}^1$ is the so-called \textit{screened} homogeneous Sobolev space $\ddot{H}^{1/2}$, which is strictly larger than $\dot{H}^{1/2}$. For instance, in the case when $\mathcal{I}=\mathbb{R}_+$, the semi-norm of   $f\in\ddot{H}^{1/2}(\Gamma)$ is given by
\begin{equation*}
 \|f\|^2_{\ddot{H}^{1/2}(\Gamma)}=\int_{\mathbb{R}_+}\int_{\mathbb{R}_+ \cap B_1(x)} \frac{|f(x, h_s(x))-f(x', h_s(x'))|^2}{|x-x'|^2} dx'dx,
\end{equation*}
where $B_1(x)$ is the one-dimensional screening  ball centered in $x$ with radius $1$. We refer the interested reader to \cite{LeoTice19, LanMin24} for the definition of homogeneous Sobolev spaces in a general framework and for their properties. We know from \cite[Theorem 1]{LanMin24} that the trace mapping from $\dot{H}^1(\Omega_s)$ to $\ddot{H}^{1/2}(\Gamma)$ is continuous and onto in the unbounded case. With such a result at hand, one might expect to extend our analysis to unbounded configurations, for instance when the fluid touches one vertical wall at $x=0$ and it is free to flow in the right half-space $x>0$. However, this is not possible because classical Sobolev embeddings, which we strongly use in the estimates of the nonlinear terms, cease to hold for these screened spaces and our analysis cannot be carried out. Indeed, there exists a function in $\ddot{H}^{1/2}(\Gamma)$ that does not belong to $L^p(\Gamma)$ for any $1\leq p\leq \infty$, see Theorem 3.11 and Remark 3.12 in \cite{LeoTice19}. As suggested by the authors, embeddings into other types of spaces, for instance weighted Lebesgue spaces, may hold but, up to now, this interesting question is open.
\end{remark}
We now show that the $L^2$-norms of the velocity potential and its time-derivatives are controlled by the basic dissipation.
\begin{proposition}\label{prop-controlL2}
Let $(\partial_t^j\eta, \partial_t^j\Phi)$ satisfy \eqref{DNpb-per}-\eqref{evo-contact-per}, \eqref{DNpb-per1}-\eqref{evo-contact-per1} and \eqref{DNpb-per2}-\eqref{evo-contact-per2}, respectively for $j=0,1,2$. If $\eta$ has zero mean over $\mathcal{I}$, then there exists a constant $C>0$, depending only on the domain $\Omega_s$, such that
\begin{equation}\label{PhiD}
\|\Phi\|^2_{L^2(\Omega_s)} + \|\partial_t \Phi\|^2_{L^2(\Omega_s)} + \|\partial_t^2\Phi\|^2_{L^2(\Omega_s)}\leq C \mathcal{D}_\parallel.
\end{equation} with $\mathcal{D}_\parallel$ defined in \eqref{basic-dis}.
\end{proposition}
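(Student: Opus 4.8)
The plan is to bound $\|\partial_t^j\Phi\|_{L^2(\Omega_s)}$ by first controlling the mean of $\partial_t^j\Phi$ over $\Omega_s$, and then applying a Poincaré–Wirtinger inequality to control the full $L^2$-norm by the mean plus the gradient $\|\nabla\partial_t^j\Phi\|_{L^2(\Omega_s)}$, which is already part of $\mathcal{D}_\parallel$. So the only substantive task is to estimate $\left|\fint_{\Omega_s}\partial_t^j\Phi\right|$.

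For $j=0$, I would combine two facts. First, by the Dirichlet condition in \eqref{DNpb-per}, the trace of $\Phi$ on $\Gamma$ equals $-g\eta + \sigma(\cdots)'$; integrating this against $1$ over $\mathcal{I}$ (or against a suitable test function), and using that $\eta$ has zero mean and that the boundary terms in the integration by parts of the curvature-type term $\bigl(\tfrac{\eta'}{(1+(h_s')^2)^{3/2}}+\mathcal{R}(h_s',\eta')\bigr)'$ are exactly $\mp\,\partial_t\eta(t,\pm1)$ by \eqref{evo-contact-per}, one gets
\[
\left|\int_\Gamma \Phi\right| \;\le\; C\bigl(|\partial_t\eta(t,-1)|+|\partial_t\eta(t,1)|\bigr)\;\le\; C\,\mathcal{D}_\parallel^{1/2}.
\]
Second, to pass from the mean over $\Gamma$ to the mean over $\Omega_s$, I would write $\fint_{\Omega_s}\Phi - \fint_\Gamma \Phi$ as an average of differences $\Phi(X)-\Phi(x,h_s(x))$ and bound this by $\|\Phi - \fint_\Gamma\Phi\|_{L^2}$-type quantities; alternatively, and more cleanly, estimate $\|\Phi-\fint_\Gamma\Phi\|_{L^2(\Omega_s)}$ directly using Proposition \ref{prop-homotrace}: subtracting the constant $\fint_\Gamma\Phi$, the trace of $\Phi-\fint_\Gamma\Phi$ on $\Gamma$ has controlled $\dot{H}^{1/2}(\Gamma)$-seminorm (bounded by $\|\nabla\Phi\|_{L^2(\Omega_s)}$ via the proposition) and mean zero on $\Gamma$, hence controlled $L^2(\Gamma)$ norm; then a trace-type / Poincaré argument on the bounded Lipschitz domain $\Omega_s$ gives $\|\Phi-\fint_\Gamma\Phi\|_{L^2(\Omega_s)}\le C\|\nabla\Phi\|_{L^2(\Omega_s)}$. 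Combining, $\|\Phi\|_{L^2(\Omega_s)} \le |\fint_\Gamma\Phi|\,|\Omega_s|^{1/2} + \|\Phi-\fint_\Gamma\Phi\|_{L^2(\Omega_s)} \le C\,\mathcal{D}_\parallel^{1/2}$.

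For $j=1,2$ I would repeat the argument with the time-differentiated problems \eqref{DNpb-per1}--\eqref{evo-contact-per1} and \eqref{DNpb-per2}--\eqref{evo-contact-per2}: the Dirichlet data are $-g\partial_t^j\eta + \sigma(\cdots)'$, whose mean over $\mathcal{I}$ after integration by parts produces the boundary terms $\mp\,\partial_t^{j+1}\eta(t,\pm1)$ via \eqref{evo-contact-per1}--\eqref{evo-contact-per2}, again bounded by $\mathcal{D}_\parallel^{1/2}$; the zero-mean of $\partial_t^j\eta$ follows from differentiating in time the zero-mean hypothesis on $\eta$ (or is part of the running assumptions). The homogeneous trace estimate of Proposition \ref{prop-homotrace} applies verbatim to $\partial_t^j\Phi$, and $\|\nabla\partial_t^j\Phi\|_{L^2(\Omega_s)}$ sits in $\mathcal{D}_\parallel$. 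Summing over $j=0,1,2$ yields \eqref{PhiD}.

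The main obstacle is the passage from the boundary mean $\fint_\Gamma\partial_t^j\Phi$ to the interior $L^2$ control: this is precisely where the homogeneous trace inequality (Proposition \ref{prop-homotrace}) is essential, since no zero-Dirichlet or zero-mean condition holds for $\partial_t^j\Phi$ on $\partial\Omega_s$ and a naive Poincaré inequality is unavailable — this is the structural point emphasized in Section \ref{subsec-L2pot}. One should also take care that the integration-by-parts identities pairing the Dirichlet trace with the constant test function produce exactly the contact-point velocities and no uncontrolled bulk remainder; the nonlinear remainder $\mathcal{R}(h_s',\eta')$ and its time derivatives contribute only through these boundary evaluations, which is consistent with \eqref{evo-contact-per}--\eqref{evo-contact-per2}.
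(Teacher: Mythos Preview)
Your proposal is correct and follows essentially the same approach as the paper. Both arguments (i) integrate the Dirichlet trace over $\mathcal{I}$ and use \eqref{evo-contact-per}--\eqref{evo-contact-per2} together with the zero-mean of $\partial_t^j\eta$ to bound $\bigl|\overline{\partial_t^j\Phi(\cdot,h_s(\cdot))}\bigr|$ by the contact-point velocities, and (ii) invoke the homogeneous trace inequality of Proposition~\ref{prop-homotrace} to control the oscillation of the trace and then a Poincar\'e-type inequality to recover the interior $L^2$ norm. The only packaging difference is that the paper explicitly computes $\int_{\Omega_s}\partial_t^j\Phi$ via the vertical-integration identity $\Phi(x,y)=\Phi(x,h_s(x))-\int_y^{h_s(x)}\partial_z\Phi\,dz$ and then applies Poincar\'e--Wirtinger with the interior mean, whereas you subtract the boundary mean directly and appeal to a generalized Poincar\'e inequality; both routes are equivalent.
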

\begin{proof}
First, using the Dirichlet condition in \eqref{DNpb-per} and \eqref{evo-contact-per}, we have that
\begin{equation*}
    \begin{aligned}
       \int_\mathcal{I} \Phi(x, h_s(x))dx &= -g\int_\mathcal{I} \eta(x)dx + \sigma \Big(\frac{\eta'}{(1+ (h'_s)^2)^{3/2}} + \mathcal{R}(h'_s, \eta') \Big)\Big|^{x=1}_{x=-1}\\[5pt]
       &= -g\int_\mathcal{I}\eta(x)dx - \partial_t \eta(-1)- \partial_t \eta(1).
    \end{aligned}
\end{equation*}Let us denote by $\overline{f}$ the mean of a one-dimensional function $f$ over $\mathcal{I}$. Then, the zero-mean assumption on $\eta$ implies that
\begin{equation}\label{est-meantrace}
    \big| \overline{\Phi(\cdot, h_s(\cdot))}\big|\leq \frac{1}{|\mathcal{I}|}\left(|\partial_t \eta(-1)| + |\partial_t \eta(1)|\right).
\end{equation}
 Note that both $\partial_t\eta$ and $\partial_t^2\eta$ have zero mean over $\mathcal{I}$ since $\eta$ has zero mean over $\mathcal{I}$. Then, we combine the Dirichlet conditions in \eqref{DNpb-per1} and \eqref{DNpb-per2}  with \eqref{evo-contact-per1} and \eqref{evo-contact-per2} to obtain that
\begin{equation}
\label{est-meantrace12}
\begin{aligned}
     &\big| \overline{\partial_t \Phi(\cdot, h_s(\cdot))}\big|\leq \frac{1}{|\mathcal{I}|} \left(|\partial_t^2 \eta(-1)| + |\partial_t^2 \eta(1)|\right),\\[5pt]&  \big| \overline{\partial_t^2\Phi(\cdot, h_s(\cdot))}\big|\leq  \frac{1}{|\mathcal{I}|}\left(|\partial_t^3 \eta(-1)| + |\partial_t^3 \eta(1)|\right).
     \end{aligned}
\end{equation}Since
for any $(x, y)\in \Omega_s$ we can write
\begin{equation*}\begin{aligned}
    \Phi(x,y)&= \Phi(x, h_s(x)) - \int_{y}^{h_s(x)}\partial_z \Phi (x,z) dz,
    \end{aligned}
\end{equation*}integrating the identity above over $\Omega_s$ and using the boundedness of $h_s -h_w$ yield that
\begin{equation*}
    \begin{aligned}
   \int_{\Omega_s}& \Phi(x,y)dxdy\\
   &=\int_\mathcal{I} \Phi(x, h_s(x)) (h_s - h_w)(x)dx - \int_\mathcal{I}\int_{h_w(x)}^{h_s(x)}\int_{y}^{h_s(x)} \partial_z \Phi (x,z) dz dy dx \\[5pt]
  &= \int_\mathcal{I} \left(\Phi(x, h_s(x)) - \overline{\Phi(\cdot, h_s(\cdot))}\right) (h_s -h_w)(x) dx   \\[5pt]
  & \quad + \overline{\Phi(\cdot,h_s(\cdot))} \int_\mathcal{I}(h_s -h_w)(x)dx
   - \int_\mathcal{I}\int_{h_w(x)}^{h_s(x)}\int_{y}^{h_s(x)} \partial_z \Phi (x,z) dz dy dx
   \\[5pt]
  & \leq C_{s,w}\Big(\big\|\Phi(\cdot, h_s(\cdot)) - \overline{\Phi(\cdot, h_s(\cdot))}\big\|_{L^2(\mathcal{I})}+  \big|\overline{\Phi(\cdot, h_s(\cdot))}\big| +\|\nabla \Phi\|_{L^2(\Omega_s)}\Big)
   \end{aligned}
\end{equation*}with $C_{s,w}>0$ depending only on $h_s-h_w$. Note that
\begin{equation*}
    \begin{aligned}
        \big\|\Phi(\cdot, h_s(\cdot)) - \overline{\Phi(\cdot, h_s(\cdot))}\big\|^2_{L^2(\mathcal{I})}&= \frac{1}{|\mathcal{I}|^2}\int_\mathcal{I}\Big|\int_\mathcal{I} \left(\Phi(x, h_s(x)) - \Phi(x', h_s(x'))\right)dx' \Big|^2 dx\\[5pt]
        \leq\frac{1}{|\mathcal{I}|}\int_\mathcal{I}\int_\mathcal{I}& |\Phi(x, h_s(x)) - \Phi(x', h_s(x')|^2 dx'dx\\[5pt]
             \leq C \int_\mathcal{I}\int_\mathcal{I}& \frac{|\Phi(x, h_s(x)) - \Phi(x', h_s(x')|^2}{|x-x'|^2} dx'dx= C\|\Phi\|^2_{\dot{H}^{1/2}(\Gamma)},
    \end{aligned}
\end{equation*}where in the last inequality we have used that, for any $x,x'\in \mathcal{I}$, there exists a constant $C>0$ such that $|x-x'|^{\alpha}\leq C$ with $\alpha>0$.
Combining the last two inequalities with \eqref{est-meantrace} and using the homogeneous trace inequality in Proposition \ref{prop-homotrace}, we obtain that
\begin{equation}\label{est-meanpot}\begin{aligned}
   \int_{\Omega_s}\Phi(x,y)dxdy &\leq C \left(\|\Phi\|_{\dot{H}^{1/2}(\Gamma)} + |\partial_t \eta(-1)|+ |\partial_t \eta(1)|+\|\nabla \Phi\|_{L^2(\Omega_s)}\right) \\[5pt]&\leq C \left( \|\nabla \Phi\|_{L^2(\Omega_s)} + |\partial_t \eta(-1)|+ |\partial_t \eta(1)|\right).\\[5pt]
     \end{aligned}
\end{equation} Finally, after denoting the mean of $\Phi$ in $\Omega_s$ by $\Phi_{\Omega_s}$, we use Poincaré-Wirtinger inequality and \eqref{est-meanpot} to get
\begin{equation*}
\begin{aligned}
    \|\Phi\|^2_{L^2(\Omega_s)}&= \|\Phi-\Phi_{\Omega_s} \|^2_{L^2(\Omega_s)} + \frac{1}{|\Omega_s|}\Big|\int_{\Omega_s}\Phi(x,y)dxdy\Big|^2
     \\[5pt]&\leq C\Big(\|\nabla \Phi\|^2_{L^2(\Omega_s)} + (\partial_t \eta)^2 (-1) + (\partial_t \eta)^2 (1)\Big)\leq C \mathcal{D}_\parallel.
\end{aligned}
\end{equation*}
 Arguing in the same way  using \eqref{est-meantrace12}, we derive the analogous bounds for the mean of $\partial_t \Phi$ and $\partial_t^2 \Phi$ in $\Omega_s$ and, consequently,
 \begin{align*}
     & \|\partial_t\Phi\|^2_{L^2(\Omega_s)}\leq C\left(\|\nabla \partial_t\Phi\|^2_{L^2(\Omega_s)} + (\partial_t^2 \eta)^2 (-1) + (\partial_t^2 \eta )^2(1)\right)\leq C \mathcal{D}_\parallel,\\[5pt]&
        \|\partial_t^2\Phi\|^2_{L^2(\Omega_s)}\leq C\left(\|\nabla \partial_t^2\Phi\|^2_{L^2(\Omega_s)} + (\partial_t^3 \eta )^2(-1) + (\partial_t^3 \eta)^2 (1)\right)\leq C \mathcal{D}_\parallel.
 \end{align*}
  This concludes the proof.
		\end{proof}

\subsection{Higher regularity for $\partial_t^j \eta$}
We focus now  on the elliptic problems in $\Gamma$ solved by $\partial_t^j \eta$ for $j=0,1, 2$. Our aim is to gain spatial regularity taking advantage of the ellipticity and using the control on the time derivatives.

Let us consider the abstract elliptic problem
	\begin{equation}\label{elliptic-per}
		\begin{aligned}
				-g \psi + \sigma \left(\frac{\psi'}{(1+(h_s')^2)^{3/2}}\right)' &=\Psi_{|_{\Gamma}}(\cdot,h_s(\cdot)) + g_3\quad \text{in} \quad \mathcal{I}, \\[5pt]
			\sigma\frac{\psi'}{(1 + (h_s')^2)^{3/2}}(\pm 1)&= g_4^\pm.
		\end{aligned}
	\end{equation}
It is clear that the operator associated with \eqref{elliptic-per} is uniformly elliptic. We have the following elliptic regularity result:
\begin{proposition}\label{prop-reg-ell}
Let $\Psi\in H^{1}(\Omega_s)$, $g_3\in H^s(\mathcal{I})$ with $-1\leq s\leq 1/2$ and $g_4^\pm\in \mathbb{R}$.
Then,  \eqref{elliptic-per} admits a unique solution $\psi\in H^{s+2}(\mathcal{I})$ and there exists a constant $C>0$ such that
\begin{equation*}\begin{aligned}
	\| \psi\|_{H^{s+2}(\mathcal{I})} \leq C \left( \|\Psi\|_{H^1(\Omega_s)}+ \|g_3\|_{H^{s}(\mathcal{I})} + |g_4^-|+ |g_4^+|\right).
	\end{aligned}
\end{equation*}
\end{proposition}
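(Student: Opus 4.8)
The plan is to treat \eqref{elliptic-per} as a uniformly elliptic two-point boundary value problem with smooth coefficients, obtaining existence, uniqueness and the quantitative bound by combining the Lax--Milgram theorem with a one-step elliptic bootstrap. Set $a(x):=\sigma/(1+(h_s'(x))^2)^{3/2}$, which, since $h_s\in C^\infty(\overline{\mathcal{I}})$ by Proposition \ref{sta-exiuni}, is a smooth function bounded above and below by positive constants on $\overline{\mathcal{I}}$. First I would record that the right-hand side $f:=\Psi_{|_\Gamma}+g_3$ belongs to $H^{1/2}(\mathcal{I})$ with
$$\|f\|_{H^{1/2}(\mathcal{I})}\leq C\big(\|\Psi\|_{H^1(\Omega_s)}+\|g_3\|_{H^{1/2}(\mathcal{I})}\big),$$
which follows from the standard trace inequality $H^1(\Omega_s)\to H^{1/2}(\Gamma)$ on the smooth (hence Lipschitz) domain $\Omega_s$, together with the identification of $H^{1/2}(\Gamma)$ with $H^{1/2}(\mathcal{I})$ via the smooth parametrization $x\mapsto (x,h_s(x))$.

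For existence and uniqueness I would use the weak formulation: multiplying the first equation in \eqref{elliptic-per} by a test function $\phi\in H^1(\mathcal{I})$, integrating by parts and inserting the Neumann condition yields
$$
\begin{aligned}
B(\psi,\phi)&:=\int_{\mathcal{I}}\big(a\,\psi'\phi'+g\,\psi\phi\big)\,dx\\
&= -\int_{\mathcal{I}}f\phi\,dx + g_4^+\phi(1)-g_4^-\phi(-1)=:\ell(\phi).
\end{aligned}
$$
Since $g>0$ and $a\geq a_{\min}>0$, the bilinear form $B$ is bounded and coercive on $H^1(\mathcal{I})$, while $\ell$ is a bounded linear functional there because $|\ell(\phi)|\leq \|f\|_{L^2(\mathcal{I})}\|\phi\|_{L^2(\mathcal{I})}+(|g_4^+|+|g_4^-|)\|\phi\|_{L^\infty(\mathcal{I})}$ and $H^1(\mathcal{I})\hookrightarrow L^\infty(\mathcal{I})$ in one dimension. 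Hence Lax--Milgram provides a unique $\psi\in H^1(\mathcal{I})$ with $\|\psi\|_{H^1(\mathcal{I})}\leq C(\|f\|_{L^2(\mathcal{I})}+|g_4^-|+|g_4^+|)$. I would stress that, thanks to the zeroth-order term $-g\psi$ with $g>0$, there is no solvability (compatibility) constraint on the data, in contrast with a pure Neumann Laplacian.

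The last step is the bootstrap. Rewrite the equation as $(a\psi')'=F$ with $F:=f+g\psi$; since $\psi\in H^1(\mathcal{I})\subset H^{1/2}(\mathcal{I})$ and $f\in H^{1/2}(\mathcal{I})$ we have $F\in H^{1/2}(\mathcal{I})$ with $\|F\|_{H^{1/2}(\mathcal{I})}\leq C(\|f\|_{H^{1/2}(\mathcal{I})}+\|\psi\|_{H^1(\mathcal{I})})$. Integrating and using the boundary condition at $x=-1$ gives $a(x)\psi'(x)=g_4^-+\int_{-1}^x F(s)\,ds$, whose consistency with the condition at $x=1$ holds automatically for the weak solution (test $B(\psi,\cdot)=\ell(\cdot)$ with $\phi\equiv1$). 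Since the antiderivative of an $H^{1/2}(\mathcal{I})$ function lies in $H^{3/2}(\mathcal{I})$, we get $a\psi'\in H^{3/2}(\mathcal{I})$, hence $\psi'=(1/a)(a\psi')\in H^{3/2}(\mathcal{I})$ because $H^{3/2}(\mathcal{I})$ is stable under multiplication by the smooth function $1/a$ (as $3/2>1/2$), and therefore $\psi\in H^{5/2}(\mathcal{I})$. Chaining the corresponding estimates,
$$
\begin{aligned}
\|\psi\|_{H^{5/2}(\mathcal{I})}&\leq C\|\psi'\|_{H^{3/2}(\mathcal{I})}+C\|\psi\|_{L^2(\mathcal{I})}\leq C\|a\psi'\|_{H^{3/2}(\mathcal{I})}+C\|\psi\|_{H^1(\mathcal{I})}\\
&\leq C\big(|g_4^-|+\|F\|_{H^{1/2}(\mathcal{I})}+\|\psi\|_{H^1(\mathcal{I})}\big),
\end{aligned}
$$
and inserting the $H^1$ bound from Lax--Milgram together with the trace bound for $f$ yields the claimed estimate. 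The only mildly delicate points are the trace step, which relies on the Lipschitz/smooth character of $\Omega_s$ available here, and the fractional-Sobolev bookkeeping for the antiderivative and for multiplication by a smooth function; both are standard on the bounded interval $\mathcal{I}$. (An alternative to Lax--Milgram is to solve the ODE explicitly via variation of parameters using two independent homogeneous solutions, but the variational route is cleaner for both well-posedness and the a priori bound.)
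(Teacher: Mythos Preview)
Your proof is correct and follows the same approach as the paper: use the trace inequality to place $\Psi_{|_\Gamma}+g_3$ in $H^{1/2}(\mathcal{I})$, then invoke elliptic regularity for the one-dimensional Neumann problem. The paper simply cites ``standard elliptic regularity theory'' without further detail, whereas you supply the underlying mechanism (Lax--Milgram for $H^1$ well-posedness, followed by an explicit integration bootstrap to reach $H^{5/2}$), which is exactly what that phrase encodes here. One minor terminological slip: $\Omega_s$ is not smooth but only Lipschitz, since it has corners at the contact points $(\pm 1, h_s(\pm 1))$; this does not affect your argument, as the trace theorem $H^1(\Omega_s)\to H^{1/2}(\Gamma)$ holds for Lipschitz domains and $\Gamma$ itself is smooth.
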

\begin{proof}
Since $\Psi\in H^1(\Omega_s)$, we have that $\Psi_{|_{\Gamma}}(\cdot,h_s(\cdot))$ $\in H^{1/2}(\mathcal{I})\subset H^{s}(\mathcal{I})$. Standard elliptic regularity theory then implies the existence and uniqueness of the solution $\psi\in H^{s+2}(\mathcal{I})$ to \eqref{elliptic-per}. Moreover, there exists a constant $C>0$ such that
\begin{equation*}\begin{aligned}
	\|\psi\|_{H^{s+2}(\mathcal{I})}&\leq C \left(\|\Psi_{|_{\Gamma}}(\cdot,h_s(\cdot)) + g_3\|_{H^{s}(\mathcal{I})} + |g_4^-|+ |g_4^+|\right)\\[5pt]& \leq C \left(\|\Psi\|_{H^1(\Omega_s)}+ \|g_3\|_{H^{s}(\mathcal{I})} + |g_4^-|+ |g_4^+|\right).
		\end{aligned}
\end{equation*}
\end{proof}
We want to apply Proposition \ref{prop-reg-ell} to the elliptic problems in $\Gamma$ solved by $\partial_t^j \eta$ for $j=0,1,2$. The elliptic problem solved by $\partial_t^j\eta$ is formed by the second line in \eqref{DNpb-per} with \eqref{evo-contact-per} for $j=0$, the second line in \eqref{DNpb-per1} with \eqref{evo-contact-per1} for $j=1$ and the second line in \eqref{DNpb-per2} with \eqref{evo-contact-per2} for $j=2$. They can be all written in the form \eqref{elliptic-per} with
$\psi= \partial_t^j \eta$, $\Psi=\partial_t^j \Phi$ and
\begin{equation}\label{g3g4-j}\begin{aligned}
   g_3= -\sigma (\partial_t^j\mathcal{R}(h'_s, \eta'))', \qquad g_4^\pm = \mp \partial_t^{j+1} \eta ( \pm 1 ) -\sigma \partial_t^j\mathcal{R}(h'_s, \eta')( \pm 1)
    \end{aligned}
\end{equation}for $j=0,1,2$.
We then state the following regularity result:

\begin{proposition}\label{prop-controletaj}
  Let $\mathcal{R}$ be given by \eqref{R} and $\mathcal{E}$, $\mathcal{\mathcal{D}}$ by \eqref{impro-ene}-\eqref{tot-diss}. Then, $(g_3,g_4^\pm)$ in \eqref{g3g4-j} belongs to $H^{1/2}(\mathcal{I})\times \mathbb{R}$ for $j=0,1$ and to $H^{-1/2+}(\mathcal{I})\times \mathbb{R}$ for $j=2$. Moreover,  $\psi=\partial_t ^j \eta$ uniquely solves \eqref{elliptic-per} with $\Psi=\partial_t^j \Phi\in H^1(\Omega_s)$ and data $(g_3,g_4)$ for $j=0,1,2$. For $j=0,1$, $ \partial_t^j \eta \in H^{5/2}(\mathcal{I})$ and  $\partial_t^2 \eta \in H^{3/2+}(\mathcal{I})$. In particular, there exists a constant $C>0$ such that
\begin{equation*}
    \|\eta\|^2_{H^{5/2}(\mathcal{I})}+  \|\partial_t\eta\|^2_{H^{5/2}(\mathcal{I})}+  \|\partial_t^2\eta\|^2_{H^{3/2+}(\mathcal{I})}\leq C (\mathcal{D}_\parallel + \big(1+\|\eta\|^2_{H^{3/2+}(\mathcal{I})}\big)\mathcal{E} \mathcal{D}).
\end{equation*}
    \end{proposition}
    \begin{proof}
  We first consider the data \eqref{g3g4-j} in the case $j=0$.
    By means of the chain rule, we have that
        \begin{equation}\label{g3-j=0}
            g_3 = -\sigma \partial_{z_1} \mathcal{R}(h'_s, \eta') h_s'' -\sigma \partial_{z_2} \mathcal{R}(h'_s, \eta') \eta''.
        \end{equation} Using the critical product estimate in Lemma \ref{lemma-prodest} then yields
        \begin{equation}\label{g3-bound}\begin{aligned}
           & \|g_3\|^2_{H^{1/2}(\mathcal{I})} \\[5pt]&\leq C ( \|\partial_{z_1} \mathcal{R}(h'_s, \eta')\|^2_{H^1(\mathcal{I})}\|h''_s\|^2_{H^{1/2}(\mathcal{I})} + \|\partial_{z_2} \mathcal{R}(h'_s, \eta')\|^2_{H^{1/2+}(\mathcal{I})}\|\eta''\|^2_{H^{1/2}(\mathcal{I})})
            \end{aligned}
        \end{equation}for some constant $C>0$.
    Applying again the chain rule and using the uniform bounds in Lemma \ref{lemma-R}, thanks to the continuous embedding $H^{1/2+}(\mathcal{I}) \subset L^{\infty}(\mathcal{I})$ we obtain that
    \begin{equation}\label{g3-1}\begin{aligned}
        \|\partial_{z_1} \mathcal{R}(h'_s, \eta')\|^2_{H^1(\mathcal{I})}&\leq C ( \|(\eta')^2\|^2_{L^2(\mathcal{I})} + \|\eta'\eta''\|^2_{L^2(\mathcal{I})})\\
        &\leq C\|\eta\|^2_{H^{3/2+}(\mathcal{I})}\|\eta\|^2_{H^{2}(\mathcal{I})} \leq C \mathcal{E}\mathcal{D}.
        \end{aligned}
    \end{equation}
 On the other hand, we infer from Lemma \ref{lemma-R} that 
\begin{align}\label{g3-2}
   \|\partial_{z_2}\mathcal{R}(h_s', \eta')\|^2_{H^{1/2+}(\mathcal{I})}
   \leq  C \|\eta\|^2_{H^{3/2+}(\mathcal{I})}\leq C\mathcal{E}. 
\end{align}
Combining \eqref{g3-1}-\eqref{g3-2} with \eqref{g3-bound} and using the smoothness of $h_s$, we find that $g_3\in H^{1/2}(\mathcal{I})$ and
 \begin{equation*}\begin{aligned}
            \|g_3\|^2_{H^{1/2}(\mathcal{I})}\leq C \mathcal{E}\mathcal{D}.
            \end{aligned}
\end{equation*}
Concerning the boundary data $$g_4^\pm=\mp \partial_t \eta ( \pm 1) - \sigma \mathcal{R}(h'_s, \eta')( \pm 1), $$
we use again Lemma \ref{lemma-R} and the continuous embedding $H^{3/2+}(\mathcal{I})\subset C^1(\overline{\mathcal{I}})$  to obtain that $g_4 \in \mathbb{R}$ and
\begin{equation*}
    (g_4^\pm)^2\leq C ((\partial_t \eta)^2 ( \pm 1) + (\eta')^4( \pm 1)) \leq C ( \mathcal{D}_\parallel +\|\eta\|^4_{H^{3/2+}(\mathcal{I})})\leq C\left(\mathcal{D}_\parallel +  \mathcal{E}\mathcal{D}\right)
\end{equation*}
In the case $j=1$, by applying the chain rule we write
\begin{align*}
  g_3&=  \left(\partial_t \mathcal{R}(h'_s, \eta')\right)'= \left(\partial_{z_2}\mathcal{R}(h'_s,\eta')\partial_t \eta'\right)' \\[5pt]&=\left(\partial_{z_1}\partial_{z_2} \mathcal{R}(h'_s, \eta') h''_s + \partial_{z_2}^2 \mathcal{R}(h'_s, \eta') \eta''\right) \partial_t \eta' + \partial_{z_2}\mathcal{R}(h'_s, \eta') \partial_t\eta''=I + II +III.
\end{align*}
The third term having the same structure as the second term in the right-hand side of \eqref{g3-j=0}, we repeat the previous analysis and find that 
\begin{equation}\label{III}\| III\|^2_{H^{1/2}(\mathcal{I})}\leq  C\|\partial_{z_2}\mathcal{R}(h_s', \eta')\|^2_{H^{1/2+}(\mathcal{I})} \|\partial_t \eta'' \|^2_{H^{1/2}(\mathcal{I})}\leq C\mathcal{E}\mathcal{D}.\end{equation}
Using Lemmas \ref{lemma-prodest} and \ref{lemma-R}  together with the algebra property of $H^{1/2+}(\mathcal{I})$, we also infer that
\begin{equation}\label{I}
\begin{aligned}
    \|I\|^2_{H^{1/2}(\mathcal{I})} &\leq C \|\partial_{z_1}\partial_{z_2}\mathcal{R}(h'_s, \eta')\|^2_{H^{1/2+}(\mathcal{I})}\|h''_s\|^2_{H^{1/2+}(\mathcal{I})}\|\partial_t \eta'\|^2_{H^{1/2}(\mathcal{I})}\\[5pt]&\leq C \|\eta\|^2_{H^{3/2+}(\mathcal{I})}\|\partial_t\eta\|^2_{H^{3/2}(\mathcal{I})}\leq C \mathcal{E}\mathcal{D}
\end{aligned}
\end{equation}
and 
\begin{equation}\label{II}
\begin{aligned}
   & \|II\|^2_{H^{1/2}(\mathcal{I})}\leq C \|\partial_{z_2}^2 \mathcal{R}(h_s', \eta')\|^2_{H^{1/2+}(\mathcal{I})}\|\partial_t \eta'\|^2_{H^{1/2+}(\mathcal{I})}   
    \|\eta''\|^2_{H^{1/2}(\mathcal{I})}\\[5pt]&\leq C (1 + \|\eta\|^2_{H^{3/2+}(\mathcal{I})})\|\partial_t \eta\|^2_{H^{3/2+}(\mathcal{I})}
    \|\eta\|^2_{H^{5/2}(\mathcal{I})}\leq C(1+ \|\eta\|^2_{H^{3/2+}(\mathcal{I})})\mathcal{E}\mathcal{D}.
\end{aligned}
\end{equation}
 Gathering \eqref{III}-\eqref{II} together, we then obtain that $g_3\in H^{1/2}(\mathcal{I})$ and 
$$\|g_3\|^2_{H^{1/2}(\mathcal{I})}\leq C (1+ \|\eta\|^2_{H^{3/2+}(\mathcal{I})})\mathcal{E}\mathcal{D}.$$
Concerning the boundary data  
\begin{align*}g_4^\pm &= \mp \partial_t^2 \eta(\pm 1) - \sigma \partial_t \mathcal{R}(h'_s, \eta')( \pm 1)\\[5pt]&=\mp \partial_t^2 \eta(\pm 1) - \sigma\partial_{z_2}\mathcal{R}(h'_s,\eta')( \pm 1)\partial_t \eta'( \pm 1),\end{align*}
arguing as in the case $j=0$ yields that $g_4 \in \mathbb{R}$ and
\begin{equation*}
    (g_4^\pm)^2\leq C \left((\partial^2_t \eta)^2 ( \pm 1) + (\eta')^2( \pm 1)(\partial_t\eta')^2( \pm 1)\right) \leq C \left( \mathcal{D}_\parallel + \mathcal{E}\mathcal{D}\right).
\end{equation*}
In the case $j=2$, the data \eqref{g3g4-j} read
  $$g_3=\left(\partial_t^2 \mathcal{R}(h'_s,\eta')\right)'= \left(\partial^2_{z_2}\mathcal{R}(h'_s, \eta')(\partial_t \eta')^2 + \partial_{z_2}\mathcal{R}(h'_s, \eta')\partial_t^2 \eta'\right)'$$
  and
$$g_4^\pm = \mp \partial_t^3 \eta( \pm1 ) -\sigma \partial_t^2\mathcal{R}(h'_s,\eta')(\pm 1).\\[5pt]$$
  We claim that $\partial_t^2 \mathcal{R}(h'_s,\eta')\in H^{1/2+}(\mathcal{I})$, which in turn implies that $g_3\in H^{-1/2+}(\mathcal{I})$, and that $g_4^\pm\in \mathbb{R}$. Indeed, after combining Lemma \ref{lemma-R} with the algebra property of $H^{1/2+}(\mathcal{I})$, it follows that 
  \begin{align*}
      \|g_3\|^2_{H^{-1/2+}(\mathcal{I})}&\leq C\|\partial_t^2 \mathcal{R}(h'_s,\eta')\|^2_{H^{1/2+}(\mathcal{I})}\\[5pt]&\leq C ( (1+ \|\eta\|^2_{H^{3/2+}(\mathcal{I})})\|\partial_t \eta\|^4_{H^{3/2+}(\mathcal{I})} +\|\eta\|^2_{H^{3/2+}(\mathcal{I})}\| \partial_t^2\eta\|^2_{H^{3/2+}(\mathcal{I})})
      \\[5pt]&\leq C  (1+ \|\eta\|^2_{H^{3/2+}(\mathcal{I})}) \mathcal{E}\mathcal{D}
  \end{align*}and, thanks to the continuous embedding $H^{1/2+}(\mathcal{I})\subset C(\overline{\mathcal{I}})$, 
  \begin{equation*}
      (g_4^\pm)^2\leq C((\partial_t^3\eta )^2( \pm1 ) + \|\partial_t^2\mathcal{R}(h'_s, \eta')\|^2_{H^{1/2+}(\mathcal{I})})\! \leq C(\mathcal{D}_\parallel + (1+\|\eta\|^2_{H^{3/2+}(\mathcal{I})})\mathcal{E}\mathcal{D}).
  \end{equation*}
  In addition, we know from Proposition \ref{prop-controlL2} that
\begin{equation}
   \|\partial_t^j \Phi\|^2_{H^{1}(\Omega_s)}\leq C \mathcal{D}_\parallel \quad \text{for} \quad j=0,1,2.
\end{equation}
Therefore, by applying Proposition \ref{prop-reg-ell} with $s=1/2$ and $s=-1/2+$, we obtain that $\partial_t^j \eta\in H^{5/2}(\mathcal{I})$ for $j=0,1$ and $\partial_t^2 \eta \in H^{3/2+}(\mathcal{I})$. In particular, there exists a constant $C>0$ such that
\begin{equation*}
    \|\eta\|^2_{H^{5/2}(\mathcal{I})}+  \|\partial_t\eta\|^2_{H^{5/2}(\mathcal{I})}+  \|\partial_t^2\eta\|^2_{H^{3/2+}(\mathcal{I})}\leq C (\mathcal{D}_\parallel + \big(1+\|\eta\|^2_{H^{3/2+}(\mathcal{I})}\big)\mathcal{E} \mathcal{D}).
\end{equation*}
\end{proof}

		\section{Elliptic estimates in $\Omega_s$}\label{sec-ellest}
		
		We know from Section \ref{subsec-L2pot} that the improved dissipation provides a control of $\partial_t^j \Phi$ in $H^1(\Omega_s)$ for $j=0,1,2$. To estimate the nonlinear terms in Section \ref{sec-NL}, the controlled regularity is not sufficient to derive bounds having the desired structure $\mathcal{E}^\theta\mathcal{D}$ with $\theta>0$. However, we manage to gain the necessary regularity by leveraging elliptic estimates for both the potential and its time derivative in the same spirit as \cite{GuoTice2018}. Here, the fluid domain $\Omega_s$ is only Lipschitz due to the presence of corners and standard elliptic regularity theory \cite{AgmDouNir1964} cannot be applied.
      Instead, we rely on the theory in \cite[Section 4]{Grisvard1985} for elliptic boundary value problems in non-smooth domains. The key tool will be Theorem \ref{theo-reg-neu} where solvability in $H^2$ of the Neumann problem in convex curvilinear plane domains is derived. This result was already used in \cite{Poyferre2019,LanMin24, MingWang2024}. Here we detail the proof for the sake of completeness.\\
      To this end, given a bounded  domain $D\subset \mathbb{R}^2$, we introduce the quotient space $\mathring{H}^2(D)=H^2(D)/\mathbb{R}$ endowed with the norm
\begin{equation*}
    \|\Psi\|_{\mathring{H}^2(D)}= \|\nabla \Psi\|_{H^1(D)}.
\end{equation*}

      \subsection{Neumann problem in a polygon}
First, we look at a simple model of this problem to understand how the opening angle at the corners affects the formation and behavior of singularities in the solution.
Let $\mathcal{P}$ be a plane bounded domain having a polygonal boundary with angles $\omega_j$ for $j=1,\dots, M$ ($M\geq 3$). We denote by $\Gamma_j$ each side of the polygon and by $n_j$ the unit outward normal vector to $\Gamma_j$. We address the case of an exact polygon, that is, $n_j$ is not parallel to $n_{j+1}$ for all $j$, excluding the case of mixed problems on a flat boundary.
We consider the Neumann problem
\begin{equation}\label{Neu-pb}
\begin{cases}
	\Delta \Psi = f  \quad & \text{in} \quad \mathcal{P}\\[5pt]
\nabla \Psi \cdot n_j = g_j  & \text{on} \quad \Gamma_j.
	\end{cases}
		\end{equation}
In the next proposition, we state a second-order regularity assertion:
\begin{proposition}\label{prop-solva-Neu}
Let $f \in L^2(\mathcal{P})$ and $g_j\in H^{1/2}(\Gamma_j)$ for $j=1,\dots,M$ satisfy the compatibility condition
\begin{equation}\label{comp-pol}\int_{\mathcal{P}} f = \sum_{j=1}^M \int_{\Gamma_j}g_j.\end{equation}
Assume that $\omega_j \in (0, \pi)$ for all $j$. Then, \eqref{Neu-pb} admits a solution $\Psi\in H^2(\mathcal{P})$, unique up to an additive constant, and there exists a constant $C>0$ such that
\begin{equation}\label{reg-est}
    \|\Psi\|_{\mathring{H}^2(\mathcal{P})} \leq C \Big(\|f\|_{L^2(\mathcal{G})} + \sum_{j=1}^M \|g_j\|_{H^{1/2}(\Gamma_j)}\Big).
\end{equation}
	\end{proposition}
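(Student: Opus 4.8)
The plan is to reduce the Neumann problem \eqref{Neu-pb} on the polygon $\mathcal{P}$ to the theory of elliptic boundary value problems in plane domains with corners developed in \cite[Section 4]{Grisvard1985}. First I would treat the existence of a weak solution: the compatibility condition \eqref{comp-pol} is exactly what is needed for the Lax–Milgram argument on $H^1(\mathcal{P})/\mathbb{R}$ to produce a unique (up to constants) variational solution $\Psi\in H^1(\mathcal{P})$ satisfying $\int_{\mathcal{P}}\nabla\Psi\cdot\nabla v = -\int_{\mathcal{P}}fv + \sum_j\int_{\Gamma_j}g_j v$ for all $v\in H^1(\mathcal{P})$, together with the bound $\|\nabla\Psi\|_{L^2(\mathcal{P})}\le C(\|f\|_{L^2(\mathcal{P})}+\sum_j\|g_j\|_{H^{-1/2}(\Gamma_j)})$. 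The substantive part is then the $H^2$-regularity: one must show that this weak solution actually lies in $H^2(\mathcal{P})$ and obeys \eqref{reg-est}.

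The key step is the analysis of the singular expansion of $\Psi$ near each vertex $v_j$ with opening angle $\omega_j$. Writing the solution in polar coordinates $(r,\theta)$ centered at $v_j$, Grisvard's theory says that $\Psi$ decomposes, modulo an $H^2$ remainder controlled by the data, into a finite sum of singular functions of the form $r^{\lambda}\,\Theta(\theta)$, where the exponents $\lambda$ are the eigenvalues of the operator pencil associated to the Laplacian with Neumann data on both sides of the corner. For the Neumann–Neumann corner these exponents are $\lambda = k\pi/\omega_j$ for $k=1,2,\dots$ (plus the constant function, $k=0$). A singular function $r^\lambda\Theta(\theta)$ fails to be in $H^2$ near the vertex precisely when $\lambda<1$, i.e. when $\omega_j>\pi$. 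Under the hypothesis $\omega_j\in(0,\pi)$ we have $k\pi/\omega_j>1$ for all $k\ge 1$, so every singular exponent is $\geq 1$ and in fact $>1$; hence no genuinely singular term survives and the remainder estimate of \cite[Theorem 4.3.1.4 / Lemma 4.4.3.1]{Grisvard1985} (the convexity criterion for $H^2$-solvability of the Neumann problem) applies directly, yielding $\Psi\in H^2(\mathcal{P})$ with the quantitative bound \eqref{reg-est}. Since this reference works with $L^2$ data for $f$ and $H^{1/2}$ data for $g_j$, the norms on the right-hand side of \eqref{reg-est} are exactly the natural ones. Uniqueness up to an additive constant is immediate from the weak formulation, since a solution with $f=0$, $g_j=0$ has $\nabla\Psi=0$.

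The main obstacle is the careful bookkeeping needed to invoke Grisvard's machinery in the present generality: one must verify that the hypothesis excluding ``flat'' corners (that $n_j$ is not parallel to $n_{j+1}$, as already assumed in the statement) is what guarantees the corner is a genuine Neumann–Neumann vertex rather than a degenerate configuration, and one must track that the singular exponents really are $k\pi/\omega_j$ in the pure-Neumann case — this is the ``spectral property of the associated pencil operator'' the introduction refers to. Beyond that, the argument is essentially a citation of \cite[Section 4]{Grisvard1985} combined with the elementary observation $\omega_j<\pi \Rightarrow k\pi/\omega_j>1$; the only care needed is to make the constant $C$ in \eqref{reg-est} depend only on $\mathcal{P}$ (in particular on the angles $\omega_j$ staying bounded away from $\pi$), which follows from the uniform form of the remainder estimate once the vertex neighborhoods are fixed.
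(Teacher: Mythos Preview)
Your proposal is correct and follows essentially the same approach as the paper's proof: both obtain a variational $H^1$ solution, invoke Grisvard's corner expansion to identify the Neumann--Neumann pencil eigenvalues as $k\pi/\omega_j$, and observe that $\omega_j\in(0,\pi)$ forces all relevant exponents above the $H^2$ threshold so that no singular part survives. The only cosmetic difference is that the paper cites Theorem~4.4.3.7 and Corollary~4.4.3.8 of \cite{Grisvard1985} (rather than Theorem~4.3.1.4/Lemma~4.4.3.1) and phrases the decomposition with eigenvalues in the strip $-1<\lambda_{j,m}<0$, then derives the estimate \eqref{reg-est} by noting that $\Delta_n:\mathring{H}^2(\mathcal{P})\to\mathrm{Im}\,\Delta_n$ is a continuous isomorphism onto a closed codimension-one subspace and applying the bounded inverse theorem, whereas you appeal directly to the remainder bound.
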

    \begin{proof}
    The existence and uniqueness up to an additive constant of a variational solution $\Psi\in H^1(\mathcal{P})$ to \eqref{Neu-pb} are standard and hold regardless of the sizes of $\omega_j$.
    The $H^2$-regularity  directly follows from Theorem 4.4.3.7 and Corollary 4.4.3.8 in \cite{Grisvard1985} for the solvability of \eqref{Neu-pb} in $H^2(\mathcal{P}) $ with homogeneous and non-homogeneous boundary conditions, respectively. Indeed, it is shown there that the variational solution admits the decomposition
    \begin{equation}\label{decomposition}
        \Psi = \Psi_{\rm reg} + \sum_{\substack{j=1,\dots,M\\[2pt] -1 < \lambda_{j,m}<0}} C_{j,m}S(\lambda_{j,m}).
    \end{equation} Above, $\Psi_{\rm reg}\in H^2(\mathcal{P})$ is the regular part and $S(\lambda_{j,m})\in H^1(\mathcal{P})\setminus H^2(\mathcal{P})$ are singular functions that depend on the eigenvalues $\lambda_{j,m}$ of an operator associated with the Laplace operator, also called \emph{pencil operator} \cite{KozMazRos97}, that captures its behavior at the corners. These eigenvalues depend on the boundary conditions considered for the Laplace equation and, in the Neumann case, they read
    \begin{equation}\label{eigen-Neu}
    \lambda_{j,m}= m\frac{\pi}{\omega_j}, \quad m\in \mathbb{Z}, \ j=1,\dots,M.
    \end{equation}
    Since $\omega_j\in (0, \pi)$ for all $j$, the summation in \eqref{decomposition} disappears and $\Psi\in H^2(\mathcal{P})$.
Once we have established the existence of a $H^2$-regular solution, unique up to an additive constant, provided that the compatibility condition \eqref{comp-pol} holds, we know that the image of $H^2(\mathcal{P})$ through the operator $\Delta_n=(\Delta, n_1\cdot \nabla, \dots, n_N\cdot \nabla)$ is a closed subspace of codimension $1$ in $L^2(\mathcal{P})\times \prod_{j=1}^N H^{1/2}(\Gamma_j)$ and that the mapping
$$\Delta_n:\quad  \mathring{H}^2(\mathcal{P})  \rightarrow  \mathrm{Im} \Delta_n $$ is a continuous isomorphism of Banach spaces. Then, by applying the bounded inverse theorem, we obtain the regularity estimate \eqref{reg-est}.
    \end{proof}
    Proposition \ref{prop-solva-Neu} implies that the solution to the Neumann problem belongs to $H^2$ in convex polygons. In general, the same regularity is not achieved when we consider mixed Dirichlet-Neumann boundary conditions, even if the data are regular. In this case, the eigenvalues of the pencil operator read
\begin{equation*}
    \lambda_{j,m} = (2m+1)\frac{\pi}{2\omega_j}, \quad m\in\mathbb{Z}, \ j=1,\dots, M.
\end{equation*}Then, the summation  in \eqref{decomposition} disappears only if $\omega_j\in (0, \pi/2)$, while singularities arise if $\omega_j\in [\pi/2, \pi)$. Therefore, the solution is $H^2$-regular only for polygons with acute angles, otherwise it is only $H^1$-regular. In this sense, we infer that the mixed Dirichlet-Neumann problem is more singular than the Neumann problem.

We would like to apply the previous regularity assertion for the transformed Neumann problems that the potential and its time derivative solve. The gain of $H^2$-regularity will be sufficient to derive bounds for the nonlinear terms in Section \ref{sec-NL} necessary to close the scheme of a priori estimates. However,
Proposition \ref{prop-solva-Neu} deals only with plane domains with polygonal boundary; this is not the case in our configuration, as $\Gamma$ is curvilinear close to the contact points. For this reason, in the next section we study the Neumann problem in the curvilinear domain $\Omega_s$.

\subsection{Neumann problem in $\Omega_s$}
Here we argue as in \cite[Section 5.2]{Grisvard1985} and \cite[Section 5]{GuoTice2018}, where solvability for the Dirichlet problem and the Stokes problem in curvilinear domains were studied, respectively.
Our goal is to  derive a second-order regularity estimate for the Neumann problem
\begin{equation}\label{Neu-pb-stat}
\begin{cases}
	\Delta \Psi = f  \quad & \text{in} \quad \Omega_s,\\[5pt]
\nabla \Psi  \cdot \frac{N_{h_s}}{|N_{h_s}|} = g_1  & \text{on} \quad\Gamma,\\[5pt]
\nabla \Psi  \cdot n = g_2  & \text{on} \quad \Gamma_w.
	\end{cases}
		\end{equation}
In order to obtain such an estimate, we introduce the following map, which transforms the neighborhood of the contact points of $\Omega_s$ into a neighborhood of the vertex $(0,0)$ in the polygon $\mathcal{G}$ previously introduced. Due to the symmetry of $\Omega_s$ with respect to the $y$-axis close to the surface $h_s$, we do this only for the point $(-1,h_s(-1))$.
Let us consider the plane cone of opening $\omega\in (0,\pi)$
      \begin{equation}\label{cone}
			\mathcal{K}_\omega= \left\{ (x,y)\in \mathbb{R}^2 \ | \  0<r<\infty, \ -\frac{\pi}{2}<\theta<-\frac{\pi}{2}+\omega  \right\},
		\end{equation}
where $(r,\theta)$ are standard polar variables,
with sides
\begin{equation}\label{sides}\Gamma_-=\left\{r>0, \ \theta= -\frac{\pi}{2} + \omega\right\} \quad  \text{and} \quad \Gamma_+=\left\{ r>0, \ \theta= -\frac{\pi}{2} \right\}.\end{equation}

We define the angle $\omega\in (0, \pi)$ formed by the stationary surface at the corners of $\Omega_s$ through the relation
\begin{equation}\label{angle-corner}
    -\textrm{cotan}{(\omega)}= h'_s(-1).
\end{equation}Note that the assumption $\llbracket \gamma \rrbracket/\sigma \in (-1,1)$ in Proposition \ref{sta-exiuni} guarantees that $h'_s(-1)$ is finite, which, in turn, implies that $\omega\in (0,\pi)$.

\begin{proposition}\label{transformation} Let $\omega$ be as in \eqref{angle-corner}, $\mathcal{K}_\omega$, $\Gamma_\pm$ be as in \eqref{cone}-\eqref{sides}
and \begin{equation}\label{r}0<r<\min \left\{1, \frac{h_s(-1)-\max_{\overline{\mathcal{I}}} h_w}{2}\right\}.\end{equation} There exists a smooth diffeomorphism $\mathcal{T}: \mathcal{K}_\omega\to \mathcal{T}(\mathcal{K}_\omega)\subset \mathbb{R}^2$ satisfying the following properties:
\begin{enumerate}
\item $\mathcal{T}$ is smooth up to $\overline{\mathcal{K}}_\omega$.
\item $\Gamma_-=\mathcal{T}^{-1}\{ (x,y)\in \R^2\,:\,x=-1, \, y<h_s(-1)\}$;
\item $\mathcal{T}^{-1}(\Gamma\cap B_r((-1,h_s(-1)))\subseteq \Gamma_+\cap B_R(0,0)$ and $\mathcal{T}^{-1}(\Omega_s\cap B_r((-1,h_s(-1))))\subseteq \mathcal{K}_\omega\cap B_R(0,0)$ for $R=\sqrt{2r^2+2r^4\|h_s\|^2_{C^2( \overline{\mathcal{I}})}}$;
\item $\mathfrak{U}=(\nabla \mathcal{T})^{-T}$ is smooth on $\overline{\mathcal{K}}_\omega$ and all its derivatives are bounded;
\item $\det(\mathfrak{U})=1$ and $\partial_x \mathfrak{U}_{i1} +\partial_y \mathfrak{U}_{i2}=0$, for $i=1,2$;
\item $\mathfrak{U}^T\mathfrak{U}$ is uniformly elliptic on $\overline{\mathcal{K}}_\omega$;
\item $\mathfrak{U}^T\mathfrak{U} - \mathbb{I}$, $\mathfrak{U} - \mathbb{I}$ are supported in $\overline{\mathcal{K}}_\omega \cap S_{2r}$, with $S_{2r}=\{(x,y)\in \mathbb{R}^2 \ |\  0\leq x\leq 2r\}$, and satisfy the bounds
     \begin{align*}&\|\mathfrak{U}^T\mathfrak{U} - \mathbb{I} \|_{L^\infty(\mathcal{K}_\omega\cap S_{2r}(0,0))}\leq C r,\\[5pt]
 &\|\mathfrak{U}- \mathbb{I}\|_{C^{0, \gamma}(\partial \mathcal{K}_\omega\cap S_{2r}(0,0))}\leq Cr\quad \text{with} \quad 1/2 <\gamma\leq 1 .\end{align*}
\end{enumerate}
\end{proposition}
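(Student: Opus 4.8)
\emph{Strategy.} The plan is to construct $\mathcal{T}$ explicitly near the corner $p=(-1,h_s(-1))$ — the other corner being handled by the stated symmetry — as a localised, area-preserving shear that bends one side of the model cone $\mathcal{K}_\omega$ onto the curved arc $\Gamma$. First I would translate $p$ to the origin and fix axes so that the two sides of $\mathcal{K}_\omega$ become tangent respectively to $\Gamma_w$ and to $\Gamma$ as in items (2)--(3); in these axes $\partial\Omega_s$ near $0$ is the union of the straight vertical half-line of the wall and the graph $\{v=\tilde h(u):u>0\ \text{small}\}$, where $\tilde h(u):=h_s(u-1)-h_s(-1)\in C^\infty$, $\tilde h(0)=0$ and, by \eqref{angle-corner}, $\tilde h'(0)=h_s'(-1)=-\cot\omega$; thus the tangent cone at the corner is exactly $\mathcal{K}_\omega$. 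Everything then reduces to straightening the curved arc onto the corresponding side of $\mathcal{K}_\omega$ near the vertex, while leaving the other side and the complement of the strip $S_{2r}$ untouched, and doing this \emph{in an area-preserving way}.

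\emph{The map.} I would take $\mathcal{T}$ to be a shear. Let $\chi_r\in C_c^\infty([0,2r))$ with $\chi_r\equiv1$ on $[0,r]$ and $|\chi_r^{(k)}|\le C_k r^{-k}$, and let $e(u):=\tilde h(u)-u\,\tilde h'(0)$ be the deviation of $\Gamma$ from its tangent line, so that $e(0)=e'(0)=0$ and, on $[0,2r]$,
\begin{equation*}
|e(u)|\le C\|h_s\|_{C^2}u^2,\qquad |e'(u)|\le C\|h_s\|_{C^2}u .
\end{equation*}
In the axes where the tangent line to $\Gamma$ at $p$ is $\{v=-u\cot\omega\}$, set
\begin{equation*}
\mathcal{T}(u,v)=\bigl(u,\ v+\chi_r(u)\,e(u)\bigr).
\end{equation*}
This is a global diffeomorphism with $\nabla\mathcal{T}=\left(\begin{smallmatrix}1&0\\ (\chi_r e)'&1\end{smallmatrix}\right)$, hence $\det(\nabla\mathcal{T})\equiv1$ \emph{no matter how we localise}: area preservation is built into the ansatz and survives the cut-off. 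Consequently $\mathfrak{U}=(\nabla\mathcal{T})^{-T}=\left(\begin{smallmatrix}1&-(\chi_r e)'\\ 0&1\end{smallmatrix}\right)$ equals the cofactor matrix $\mathrm{cof}(\nabla\mathcal{T})$, whose rows are divergence-free by the Piola identity; this is precisely item (5).

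\emph{Checking the properties.} Item (1) and the smoothness and boundedness in (4) are read off the explicit formula, since $\mathfrak{U}$ is polynomial in the bounded quantities $\chi_r^{(j)}e^{(\ell)}$; item (6) holds because $\nabla\mathcal{T}$ is bounded with bounded inverse, so $\mathfrak{U}^T\mathfrak{U}\ge c\,\mathbb{I}$. For (2)--(3): $\chi_r(0)e(0)=0$, so the side $\{u=0\}$ is mapped identically onto the wall; for $u\le r$ one has $\chi_r\equiv1$, whence $\mathcal{T}(u,-u\cot\omega)=(u,\tilde h(u))$ and the other side is mapped onto $\Gamma$; finally, since $\|\chi_r e\|_{L^\infty(S_{2r})}\le C\|h_s\|_{C^2}r^2$, the inverse $\mathcal{T}^{-1}$ shifts the $v$-coordinate by at most $C\|h_s\|_{C^2}r^2$ on $B_r(p)$, which produces the stated value of $R$ and the two inclusions. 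For (7): $\mathfrak{U}-\mathbb{I}$ and $\mathfrak{U}^T\mathfrak{U}-\mathbb{I}$ vanish wherever $\chi_r$ and $\chi_r'$ do, hence are supported in $\overline{\mathcal{K}}_\omega\cap S_{2r}$, and
\begin{equation*}
\|(\chi_r e)'\|_{L^\infty(S_{2r})}\le\|\chi_r e'\|_{\infty}+\|\chi_r' e\|_{\infty}\le C\|h_s\|_{C^2}\bigl(r+r^{-1}\cdot r^2\bigr)\le C\|h_s\|_{C^2}\,r,
\end{equation*}
so $\|\mathfrak{U}-\mathbb{I}\|_{L^\infty}$ and $\|\mathfrak{U}^T\mathfrak{U}-\mathbb{I}\|_{L^\infty}$ are $\le Cr$; the Hölder bound on $\partial\mathcal{K}_\omega\cap S_{2r}$ then follows from this $L^\infty$ estimate together with $\|(\chi_r e)'\|_{\mathrm{Lip}(S_{2r})}\le C\|h_s\|_{C^2}$ and $(\chi_r e)'(0)=0$, by interpolation, for any $\gamma<1$.

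\emph{Where the difficulty lies.} No individual estimate is hard; the real point is to meet (1)--(7) \emph{simultaneously}. One needs a bona fide diffeomorphism onto a genuine corner neighbourhood of $\Omega_s$ that (i) is the identity outside $S_{2r}$, so that the elliptic analysis of Section \ref{sec-ellest} becomes a compactly supported perturbation of the flat Neumann problem of Proposition \ref{prop-solva-Neu}; (ii) has unit Jacobian, so that the transported Laplacian keeps the divergence form with divergence-free coefficient rows; and (iii) is $O(r)$-close to the identity both in $L^\infty$ inside $\mathcal{K}_\omega$ and in a Hölder norm on $\partial\mathcal{K}_\omega$, so that the corner-singularity theory can be run perturbatively. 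The shear ansatz is exactly what reconciles (i)--(iii): localising by $\chi_r$ does not touch the Jacobian, and the quadratic vanishing of $e$ at the corner turns the unavoidable $r^{-1}$ loss coming from $\chi_r'$ into the required $O(r)$ smallness. The only mildly delicate point left is the boundary Hölder estimate near the vertex, handled by the interpolation indicated above.
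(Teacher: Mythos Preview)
Your construction is essentially identical to the paper's: both take the explicit shear
\[
\mathcal{T}(x,y)=\bigl(x-1,\ y+h_s(-1)+\chi(x)\,(h_s(-1+x)-h_s(-1)-h_s'(-1)x)\bigr),
\]
which in your translated coordinates is precisely $\mathcal{T}(u,v)=(u,v+\chi_r(u)e(u))$; the matrix $\mathfrak{U}$, the support statements, and the Taylor-expansion bounds all match. Your write-up is in fact more explicit than the paper's on several points (the Piola identity for (5), the role of the quadratic vanishing of $e$ in absorbing the $r^{-1}$ loss from $\chi_r'$), and the one place where you are slightly looser---the interpolation for the $C^{0,\gamma}$ bound, which really yields $Cr^{1-\gamma}$ rather than $Cr$---is a harmless imprecision shared by the paper's statement and is all that is needed downstream in Proposition~\ref{prop-aprioriH2}.
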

\begin{proof}We introduce the diffeomorphism $\mathcal{T}: \mathcal{K}_\omega \rightarrow \mathcal{T}(\mathcal{K}_\omega)\subset \mathbb{R}^2$ given by
\begin{equation}
    \mathcal{T}(x,y)= \big(x-1, y + h_s(-1) + \chi(x) (h_s(-1+x)- h_s(-1) - h'_s(-1)x) \big).
\end{equation}
where $\chi: [0, +\infty)\rightarrow \mathbb{R}$ is a cut-off function with $0\leq \chi\leq 1$ such that $\chi=1$ in $[0,r]$ and $\chi= 0$ in $[0, +\infty) \setminus [0,2r]$, where $r$ satisfies \eqref{r}.
The associated matrix-valued function $\mathfrak{U}= (\nabla \mathcal{T})^{-T}: \mathcal{K}_\omega\rightarrow \mathbb{R}^{2\times 2}$ reads
\begin{equation}\label{Ufrak}
    \mathfrak{U}(x,y)= \left(\begin{matrix}
        1 & -\big(\chi(x) ( h_s(-1+x) -h_s(-1) -h'_s(-1)x)\big)' \\[5pt]
        0& 1
    \end{matrix}\right).
\end{equation}
Properties (1)-(6) are direct or can be shown as in Proposition 5.5 in \cite{GuoTice2018}. Due to the behavior of $\chi$, we have that $\mathfrak{U}^T\mathfrak{U} - \mathbb{I}$ and $\mathfrak{U} - \mathbb{I}$ are supported in $\overline{\mathcal{K}}_\omega \cap S_{2r}$. Moreover,  $\mathbb{I} - \mathfrak{U}$ vanishes on $\Gamma_-$ and, thanks to the Taylor expansions
$$\|h_s(-1+\cdot)-h_s(-1) - h'_s(-1)\cdot\|_{L^\infty(0,2r)}\leq C r^2,$$
$$\|h'_s(-1+\cdot) - h'_s(-1)\|_{L^\infty(0,2r)}\leq C r,$$
we find property (7).
\end{proof}

We first prove an a priori estimate in $H^2$ for a general boundary value problem in a bounded open domain $\mathcal{G}\subset \mathbb{R}^2$ whose boundary is a curvilinear convex polygon. For simplicity, we address the case where $\mathcal{G}$ has only one corner at $(x_0, l(x_0))$, that is, $\overline{\partial\mathcal{G}}\setminus {(x_0,l(x_0))}$ is smooth. Moreover, we assume that $\partial \mathcal{G}$ is the union of the vertical line $\{x=x_0\}$ and the surface parametrized by a smooth function $l(x)$ in a neighborhood of $(x_0, l(x_0))$. We consider the boundary value problem
\begin{equation}\label{Neu-pb-A}
\begin{cases}
	\nabla \cdot ( \mathcal{A}^T\mathcal{A}\nabla \Psi) = f  \quad & \text{in} \quad \mathcal{G},\\[5pt]
\mathcal{A}\nabla \Psi  \cdot \frac{\mathcal{A} N }{|\mathcal{A} N|} = g  & \text{on} \quad\partial \mathcal{G},
	\end{cases}
		\end{equation}
        where $N$ is the outward normal vector to $\partial \mathcal{G}$ and  $
        \mathcal{A}$ is a smooth matrix-valued function in $\overline{\mathcal{G}}$ with all its derivatives bounded such that $\mathcal{A}(x_0, l(x_0))=\mathbb{I}$ and
        $\mathcal{A}^T\mathcal{A}$ is uniformly elliptic in $\overline{\mathcal{G}}$.

\begin{proposition}\label{prop-aprioriH2}Let $f\in L^2(\mathcal{G})$ and $g\in H^{1/2}(\partial \mathcal{G})$. There exists a constant $C>0$ such that a solution $\Psi\in H^2(\mathcal{G})$ to \eqref{Neu-pb-A} satisfies
\begin{equation} \label{reg-est-neu}
	\| \Psi\|_{\mathring{H}^2(\mathcal{G})} \leq C( \|f\|_{L^2(\mathcal{G})}+ \|g\|_{H^{1/2}(\partial \mathcal{G})}).
	\end{equation}
\end{proposition}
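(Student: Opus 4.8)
The plan is to localize with a partition of unity and treat the two regions separately: away from the corner, \eqref{Neu-pb-A} is a Neumann problem on a smooth boundary with smooth coefficients and classical regularity applies; near the corner, the flattening diffeomorphism of Proposition \ref{transformation} (adapted to the corner of $\mathcal{G}$) turns the problem into a small perturbation of the constant-coefficient Neumann problem on the convex cone $\mathcal{K}_\omega$, to which Proposition \ref{prop-solva-Neu} applies. Since \eqref{reg-est-neu} is modulo constants, it is enough to bound $\|\nabla\Psi\|_{H^1(\mathcal{G})}$; its lowest-order part is obtained at once from the weak formulation of \eqref{Neu-pb-A}, testing against $\Psi$ minus its mean over $\mathcal{G}$ and using the uniform ellipticity of $\mathcal{A}^T\mathcal{A}$, the trace inequality and Poincar\'e--Wirtinger:
\[
\|\nabla\Psi\|_{L^2(\mathcal{G})}\le C\big(\|f\|_{L^2(\mathcal{G})}+\|g\|_{H^{1/2}(\partial\mathcal{G})}\big).
\]
It remains to upgrade $\nabla\Psi$ from $L^2$ to $H^1$.

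Take a smooth partition $1=\theta_0+\theta_1$ on $\overline{\mathcal{G}}$ with $\theta_1$ supported in $B_r((x_0,l(x_0)))$, $r$ as in \eqref{r} and small, and $\theta_0$ supported away from the corner. On $\operatorname{supp}\theta_0$ both $\partial\mathcal{G}$ and $\mathcal{A}$ are smooth, so \cite{AgmDouNir1964} gives $\|\theta_0\Psi\|_{H^2(\mathcal{G})}\le C(\|f\|_{L^2}+\|g\|_{H^{1/2}}+\|\nabla\Psi\|_{L^2})$, the last term coming from the commutator of $\nabla\cdot(\mathcal{A}^T\mathcal{A}\nabla\,\cdot\,)$ with $\theta_0$ and already under control. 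For the corner piece, put $\widetilde\Psi=(\theta_1\Psi)\circ\mathcal{T}$, which is compactly supported in $\mathcal{K}_\omega$ by property (3); extend it by zero to a fixed bounded convex polygon $\mathcal{P}$ whose boundary near the origin is $\partial\mathcal{K}_\omega$, which has a single corner there of angle $\omega\in(0,\pi)$ (and all other corners, irrelevant because $\widetilde\Psi$ vanishes near them, of angles in $(0,\pi)$). Using $\det(\nabla\mathcal{T})=1$ and properties (4)--(6), $\widetilde\Psi$ solves a Neumann-type problem $\nabla\cdot(\mathcal{C}^T\mathcal{C}\nabla\widetilde\Psi)=\widetilde f$ in $\mathcal{P}$ with $\mathcal{C}\nabla\widetilde\Psi\cdot\frac{\mathcal{C}n}{|\mathcal{C}n|}=\widetilde g$ on $\partial\mathcal{P}$, where $n$ is the outward normal to $\partial\mathcal{P}$, $\mathcal{C}=(\mathcal{A}\circ\mathcal{T})\,\mathfrak{U}$ (hence $\mathcal{C}^T\mathcal{C}=\mathbb{I}$ off $S_{2r}$, where $\widetilde\Psi$ vanishes), and $\widetilde f,\widetilde g$ carry the transformed data together with the first-order commutators generated by $\theta_1$, these last supported strictly inside $\mathcal{G}$ away from the corner and hence bounded by $\|\nabla\Psi\|_{L^2}$.

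Rewriting as a perturbed Laplace--Neumann problem,
\[
\Delta\widetilde\Psi=\widetilde f+\nabla\cdot\big((\mathbb{I}-\mathcal{C}^T\mathcal{C})\nabla\widetilde\Psi\big)\ \ \text{in }\mathcal{P},\qquad n\cdot\nabla\widetilde\Psi=\widetilde g+\mathfrak{r}\ \ \text{on }\partial\mathcal{P},
\]
where $\mathfrak{r}$ is the difference between the true conormal derivative and $n\cdot\nabla\widetilde\Psi$, and applying the a priori form of \eqref{reg-est} in Proposition \ref{prop-solva-Neu} — valid for every $H^2$ function on $\mathcal{P}$, since $(\Delta\widetilde\Psi,n\cdot\nabla\widetilde\Psi)$ automatically satisfies \eqref{comp-pol} — gives
\[
\|\widetilde\Psi\|_{\mathring{H}^2(\mathcal{P})}\le C\big(\|\widetilde f\|_{L^2(\mathcal{P})}+\|\widetilde g\|_{H^{1/2}(\partial\mathcal{P})}+\|(\mathbb{I}-\mathcal{C}^T\mathcal{C})\nabla\widetilde\Psi\|_{H^1(\mathcal{P})}+\|\mathfrak{r}\|_{H^{1/2}(\partial\mathcal{P})}\big).
\]
Because $\mathcal{A}(x_0,l(x_0))=\mathbb{I}$ with $\mathcal{A}$ Lipschitz, and property (7) of Proposition \ref{transformation} gives $\|\mathbb{I}-\mathfrak{U}^T\mathfrak{U}\|_{L^\infty(\mathcal{K}_\omega\cap S_{2r})}\le Cr$ and $\|\mathbb{I}-\mathfrak{U}\|_{C^{0,\gamma}(\partial\mathcal{K}_\omega\cap S_{2r})}\le Cr$ with $\gamma>1/2$, one gets $\|\mathbb{I}-\mathcal{C}^T\mathcal{C}\|_{L^\infty(\mathcal{P})}\le Cr$ and $\|\mathbb{I}-\mathcal{C}\|_{C^{0,\gamma}(\partial\mathcal{P})}\le Cr$, with all derivatives of $\mathcal{C}$ bounded by property (4). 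Hence $\|(\mathbb{I}-\mathcal{C}^T\mathcal{C})\nabla\widetilde\Psi\|_{H^1(\mathcal{P})}\le Cr\,\|\widetilde\Psi\|_{\mathring{H}^2(\mathcal{P})}+C\|\nabla\widetilde\Psi\|_{L^2(\mathcal{P})}$, and, using the multiplier embedding $C^{0,\gamma}\cdot H^{1/2}\hookrightarrow H^{1/2}$ for $\gamma>1/2$ together with the trace theorem, $\|\mathfrak{r}\|_{H^{1/2}(\partial\mathcal{P})}\le Cr\,\|\widetilde\Psi\|_{\mathring{H}^2(\mathcal{P})}+C\|\nabla\widetilde\Psi\|_{L^2(\mathcal{P})}$. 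For $r$ small the terms $Cr\,\|\widetilde\Psi\|_{\mathring{H}^2(\mathcal{P})}$ are absorbed into the left-hand side; pulling back by $\mathcal{T}$ and combining with the estimate for $\theta_0\Psi$ and the first-order bound above produces \eqref{reg-est-neu}.

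The main obstacle is this absorption step: one must verify that the two corrections — from the variable coefficients and from straightening the curvilinear corner — act as operators of small norm $O(r)$ both on $\mathring{H}^2(\mathcal{P})$ and on its boundary trace space $H^{1/2}(\partial\mathcal{P})$. This is exactly what the quantitative bounds in property (7) of Proposition \ref{transformation} provide, but the boundary part is delicate: it forces using $C^{0,\gamma}$ with H\"older exponent $\gamma>1/2$ as a pointwise multiplier on $H^{1/2}(\partial\mathcal{P})$, a smaller exponent failing to give a bounded multiplier, and it requires carefully tracking that every commutator produced by the cut-off $\theta_1$ is supported away from the corner, so that it only contributes $\|\nabla\Psi\|_{L^2}$-type quantities already controlled by the variational estimate.
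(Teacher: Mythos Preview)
Your proposal is correct and follows essentially the same approach as the paper: localize near the corner, straighten it via a diffeomorphism with the properties of Proposition \ref{transformation}, apply the polygon estimate of Proposition \ref{prop-solva-Neu}, and absorb the perturbation terms using the $O(r)$ smallness bounds (with the $C^{0,\gamma}$, $\gamma>1/2$, multiplier property on $H^{1/2}$ for the boundary part), combining at the end with the variational $\|\nabla\Psi\|_{L^2}$ bound. The only cosmetic difference is that the paper performs the absorption in two stages---first for the straightening perturbation $\mathbb{I}-\mathfrak{O}^T\mathfrak{O}$, $\mathbb{I}-\mathfrak{O}$ on the polygon, then, after changing coordinates back to $\mathcal{G}$, for the coefficient perturbation $\mathbb{I}-\mathcal{A}^T\mathcal{A}$, $\mathbb{I}-\mathcal{A}$---whereas you bundle both into the single matrix $\mathcal{C}=(\mathcal{A}\circ\mathcal{T})\,\mathfrak{U}$ and absorb once; either route works.
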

\begin{proof}
The proof is an adaptation of Lemma 5.2.3 in \cite{Grisvard1985} to the Neumann case with non-homogeneous boundary data.
Let us consider the smooth diffeomorphism $\mathcal{O} : \mathcal{K}_\omega \rightarrow \mathcal{O}(\mathcal{K}_\omega)\subset \mathbb{R}^2$  given by
\begin{equation*}
    \mathcal{O}(x,y)= (x-x_0, y + l(x_0)+ \eta(x) (l(x_0 +x)-l(x_0)- l'(x_0)x) )
\end{equation*}where $\eta: [0, +\infty)\rightarrow \mathbb{R}$ is a cut-off function with $0\leq \eta\leq 1$ such that $\eta=1$ in $[0,r]$ and $\eta= 0$ in $[0, +\infty) \setminus [0,2r]$,  with $r$ sufficiently small. Note that $\mathcal{O}$ is analogous to the diffeomorphism $\mathcal{T}$
introduced in Proposition \ref{transformation}, hence the corresponding properties also hold for $\mathcal{O}$.
From properties (2)-(3), we know that $\mathcal{O}^{-1}(\partial \mathcal{G}\cap B_r(x_0,l(x_0)) )$ is the union of two straight segments contained in $\partial \mathcal{K}_\omega$ and $\mathcal{O}^{-1}(\mathcal{G} \cap B_r(x_0, l(x_0)))$ coincides with $\mathcal{K}_\omega$, with $-\mathrm{cotan}(\omega) =l'(x_0)$, near $(0,0)$. Then, we introduce a smooth cut-off function $\chi:\mathcal{G}\rightarrow \mathbb{R} $  with $0\leq \chi \leq 1 $ such that $\chi=1$ in $B_{r/4}(x_0,l(x_0))$, $\chi=0$ in $\mathcal{G} \setminus B_{r/2}(x_0,l(x_0))$ and $\partial_n \chi=0$ on $\partial \mathcal{G}$. Note that the support of $\chi$ is strictly contained in $\mathcal{G} \cap B_{r/2}(x_0,l(x_0))$.
Then,  $\Psi^*=(\chi\Psi)\circ \mathcal{O}$ solves the Neumann problem
\begin{equation}\label{apriori-neupb}
\begin{cases}
    \Delta \Psi^*= f^* + \nabla \cdot \left((\mathbb{I}-\mathfrak{O}^T\mathfrak{O})\nabla\Psi^*\right) \quad & \text{in} \quad \mathcal{P},\\[5pt]
\nabla \Psi^*  \cdot n = g^* + (\mathcal{I} - \mathfrak{O})\nabla \Psi^*\cdot \tfrac{\mathfrak{O} \widetilde{N}}{| \widetilde{N}|}  + \nabla\Psi^*\cdot    (\mathcal{I} - \mathfrak{O})n& \text{on} \quad \partial \mathcal{P}\cap \partial\mathcal{K}_\omega,\\[5pt]
\nabla \Psi^*  \cdot n = 0& \text{elsewhere on $\partial \mathcal{P}$},
\end{cases}
\end{equation}
where $\mathfrak{O}=(\nabla\mathcal{O})^{-T}$,   $\widetilde{N}$ is the outward normal vector to $\partial \mathcal{P} \cap \partial \mathcal{K}_\omega$ and $\mathcal{P}$ is a convex polygon such that $\overline{\mathcal{P}}\subset \mathcal{O}^{-1}(\mathcal{G} \cap B_r(-1, h_s(-1)))$, $\overline{\mathcal{P}}$ contains the support of $\Psi^*$ and $\partial\mathcal{P}$ coincides with $\partial\mathcal{K}_\omega$ near $(0,0)$. Above, $$f^*= \left(\nabla \cdot (\mathcal{A}^T\mathcal{A}\nabla(\chi \Psi)) + \nabla\cdot ((\mathbb{I}- \mathcal{A}^T\mathcal{A})\nabla (\chi \Psi)) \right)\circ \mathcal{O}$$ and $$ g^*=\tfrac{|\mathfrak{O}\widetilde{N}|}{|\widetilde{N}|} \left(\mathcal{A}\nabla (\chi \Psi)\cdot \tfrac{\mathcal{A}N}{|\mathcal{A}N|} + (\mathbb{I}-\mathcal{A})\nabla(\chi \Psi)\cdot\tfrac{\mathcal{A}N}{|\mathcal{A}N|} + \nabla (\chi \Psi)\cdot \left(\tfrac{N}{|N|} - \tfrac{\mathcal{A}N}{|\mathcal{A}N|}\right)  \right)\circ\mathcal{O}.$$
Thanks to the smoothness of $\mathcal{A}$ and $\mathfrak{O}$, the one-dimensional product estimate
$$\|h_1h_2\|_{H^{1/2}}\leq C \|h_1\|_{C^{0,\gamma}}\|h_2\|_{H^{1/2}} \quad \text{for $1/2<\gamma\leq 1$}$$ and a trace theorem, the right-hand sides  in \eqref{apriori-neupb} belong to $L^2(\mathcal{P})$ and $H^{1/2}(\partial \mathcal{P}\cap \partial \mathcal{K}_\omega)$, respectively.
Since all the angles of $\mathcal{P}$ are less than $\pi$, we use \eqref{reg-est} and obtain
\begin{equation*}\begin{aligned}&\|\Psi^*\|_{\mathring{H}^2(\mathcal{P})}\leq C \big( \|f^*\|_{L^2(\mathcal{P})} + \|g^*\|_{H^{1/2}(\partial\mathcal{P}\cap \mathcal{K}_\omega)} + \|\nabla \Psi^*\|_{L^2(\mathcal{P})} \\[5pt]
&+\|\mathbb{I}- \mathfrak{O}^T\mathfrak{O}\|_{L^\infty(\mathcal{P}\cap S_{2r})} \|\nabla^2 \Psi^*\|_{L^2(\mathcal{P})} + \|\mathbb{I}- \mathfrak{O}\|_{C^{0,\gamma}(\partial \mathcal{P} \cap \partial \mathcal{K}_\omega \cap S_{2r} )}\|\nabla \Psi^*\|_{H^1(\mathcal{P})}
\big),
\end{aligned}
\end{equation*}where $C>0$ is a constant independent of $r$. Note that we have used the fact that $ \mathfrak{O}^T\mathfrak{O}- \mathbb{I}$ and $\mathfrak{O} - \mathbb{I}$ are supported in $S_{2r}$. Due to property (7) in Proposition \ref{transformation} choosing $r$ sufficiently small yields that
\begin{equation*}\begin{aligned}&\|\Psi^*\|_{\mathring{H}^2(\mathcal{P})}\leq C \big( \|f^*\|_{L^2(\mathcal{P})} + \|g^*\|_{H^{1/2}(\partial\mathcal{P}\cap \mathcal{K}_\omega)} + \|\nabla \Psi^*\|_{L^2(\mathcal{P})}
\big)
\end{aligned}
\end{equation*}
and changing coordinates back to $\mathcal{G}$ implies that
\begin{equation*}\begin{aligned}\|\chi\Psi\|_{\mathring{H}^2(\mathcal{G})}&\leq C \big(\|\nabla \cdot (\mathcal{A}^T\mathcal{A}\nabla(\chi \Psi))\|_{L^2(\mathcal{G})} +\|\mathcal{A}\nabla (\chi \Psi)\cdot \tfrac{\mathcal{A}N}{|\mathcal{A}N|} \|_{H^{1/2}(\partial \mathcal{G})}\\[5pt]&
+ \|\mathbb{I}-\mathcal{A}^T\mathcal{A}\|_{L^\infty(\mathrm{supp}\chi\cap \mathcal{G})}\|\nabla^2 (\chi \Psi)\|_{L^2(\mathcal{G})} + \|\nabla (\chi \Psi)\|_{L^2(\mathcal{G})}\\[5pt]&
 + \big(\|\mathbb{I}-\mathcal{A}\|_{C^{0,\gamma}(\mathrm{supp}\chi \cap \partial \mathcal{G})} + \|\tfrac{N}{|N|}-\tfrac{\mathcal{A}N}{|\mathcal{A}N|}\|_{C^{0,\gamma}(\mathrm{supp}\chi\cap \partial \mathcal{G})}\big) \|\nabla(\chi \Psi)\|_{H^1(\mathcal{G})}
\big).
\end{aligned}
\end{equation*} Exploiting the fact that $\mathcal{A}$ is smooth and $\mathcal{A}(x_0, l(x_0))= \mathbb{I}$, we choose a possibly smaller $r$ and obtain
\begin{align*}
\|\chi\Psi\|_{\mathring{H}^2(\mathcal{G})}\leq C \big(& \|\nabla \cdot (\mathcal{A}^T\mathcal{A}\nabla(\chi \Psi) )\|_{L^2(\mathcal{G})} \\
&+\| \mathcal{A}\nabla (\chi \Psi)\cdot \tfrac{\mathcal{A}N}{|\mathcal{A} N|}\|_{H^{1/2}(\partial\mathcal{G})}
 + \|\nabla (\chi \Psi)\|_{L^2(\mathcal{G})}\big).
\end{align*}
Since $\partial \mathcal{G}$ is smooth away from the corners, we know from standard elliptic theory that $(1-\chi)\Psi\in H^2(U)$ where $U$ is a $C^2$-domain contained in $\mathcal{G}$ whose boundary coincides with $\partial \mathcal{G}$ away from the region $\{\chi=1\}$. Moreover, we have that
\begin{align*}
\|(1-\chi)\Psi\|_{\mathring{H}^2(\mathcal{G})}
\leq C \big( &\|\nabla \cdot (\mathcal{A}^T\mathcal{A}\nabla((1-\chi) \Psi) )\|_{L^2(\mathcal{G})} \\[5pt]&+ \|\mathcal{A}\nabla ((1-\chi) \Psi)\cdot \tfrac{\mathcal{A}N}{|\mathcal{A} N|} \|_{H^{1/2}(\partial \mathcal{G})}
 + \|\nabla ((1-\chi) \Psi)\|_{L^2(\mathcal{G})}\big).
\end{align*}
Gathering the previous estimates together yields that
\begin{equation}\label{estH2-H1}
\|\Psi\|_{\mathring{H}^2(\mathcal{G})}\leq C \big( \| f \|_{L^2(\mathcal{G})} + \| g\|_{H^{1/2}(\partial\mathcal{G})}
 + \|\nabla \Psi\|_{L^2(\mathcal{G})}\big)
\end{equation}
and combining \eqref{estH2-H1} with a standard variational estimate gives \eqref{reg-est-neu}.
\end{proof}

We now state the solvability result for the Neumann problem in $H^2(\Omega_s)$.
\begin{theorem}\label{theo-reg-neu}
Let $f\in L^2(\Omega_s)$, $g_1\in H^{1/2}(\Gamma)$, $g_2\in H^{1/2}(\Gamma_w)$ satisfy
\begin{equation*}
    \int_{\Omega_s} f = \int_{\Gamma} g_1 + \int_{\Gamma_w} g_2.
\end{equation*} Then, \eqref{Neu-pb-stat} admits a solution $\Psi \in H^2(\Omega_s)$ to \eqref{Neu-pb-stat}, unique up to an additive constant. Moreover, it satisfies the regularity estimate \eqref{reg-est-neu}.
\end{theorem}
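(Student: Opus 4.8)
The plan is to first produce a variational solution and then upgrade its regularity to $H^2$ by localizing near the two corners $(\pm1,h_s(\pm1))$ of $\Omega_s$, using the flattening diffeomorphism of Proposition~\ref{transformation} to reduce to the convex-polygon situation of Proposition~\ref{prop-solva-Neu} and the a priori estimate of Proposition~\ref{prop-aprioriH2}. For the variational step, I would note first that on both $\Gamma$ and $\Gamma_w$ the conormal derivative in \eqref{Neu-pb-stat} is simply the unit normal derivative, so \eqref{Neu-pb-stat} is the plain Neumann problem, that is, the case $\mathcal{A}=\mathbb{I}$ of \eqref{Neu-pb-A}. The bilinear form $a(\Psi,v)=\int_{\Omega_s}\nabla\Psi\cdot\nabla v$ is bounded and, by the Poincaré–Wirtinger inequality, coercive on $H^1(\Omega_s)$ modulo constants, while the linear form $v\mapsto \int_\Gamma g_1 v+\int_{\Gamma_w}g_2 v-\int_{\Omega_s}f v$ is well defined on that quotient precisely because of the assumed compatibility condition; Lax–Milgram then yields a solution $\Psi\in H^1(\Omega_s)$, unique up to an additive constant, with $\|\nabla\Psi\|_{L^2(\Omega_s)}\le C\big(\|f\|_{L^2(\Omega_s)}+\|g_1\|_{L^2(\Gamma)}+\|g_2\|_{L^2(\Gamma_w)}\big)$. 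This step is insensitive to the opening angles.

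\emph{$H^2$-regularity.} I would fix a partition of unity $1=\zeta_0+\zeta_-+\zeta_+$ on $\overline{\Omega}_s$ with $\partial_n\zeta_j=0$ on $\partial\Omega_s$, where $\zeta_\pm$ is supported in a small ball $B_r(\pm1,h_s(\pm1))$ with $r$ as in \eqref{r} and $\zeta_0$ is supported away from both contact points. On $\mathrm{supp}\,\zeta_0$ the boundary $\partial\Omega_s$ is smooth (the vessel bottom joins the vertical walls smoothly and $\Gamma$ is smooth), so $\zeta_0\Psi$ solves a Neumann problem on a $C^2$ domain and classical elliptic regularity \cite{AgmDouNir1964} gives $\zeta_0\Psi\in H^2$ with the associated estimate. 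For $\zeta_-\Psi$ I would push the problem forward through $\mathcal{T}$ from Proposition~\ref{transformation}: by properties (2)–(3) there, $\mathcal{T}^{-1}$ maps a neighbourhood of $(-1,h_s(-1))$ in $\Omega_s$ onto a neighbourhood of the vertex of the cone $\mathcal{K}_\omega$ with $\omega\in(0,\pi)$ determined by \eqref{angle-corner}, sending $\Gamma$ to $\Gamma_+$ and the vertical wall $\{x=-1\}$ to $\Gamma_-$; thus $(\zeta_-\Psi)\circ\mathcal{T}$ solves a Neumann problem for $\nabla\cdot(\mathfrak{U}^T\mathfrak{U}\nabla\,\cdot\,)$ on a convex polygon $\mathcal{P}$ that coincides with $\partial\mathcal{K}_\omega$ near the vertex, with data in $L^2(\mathcal{P})$ and $H^{1/2}(\partial\mathcal{P})$. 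Treating $\mathfrak{U}^T\mathfrak{U}$ as a small perturbation of $\mathbb{I}$ exactly as in the proof of Proposition~\ref{prop-aprioriH2}, using property (7) of Proposition~\ref{transformation} and the smallness of $r$, this transported function is the variational solution of a genuine convex-polygon Neumann problem, so by Proposition~\ref{prop-solva-Neu}—equivalently the decomposition \eqref{decomposition}, whose singular part is empty since the Neumann eigenvalues \eqref{eigen-Neu} are $m\pi/\omega$ and none lies in $(-1,0)$ for $\omega<\pi$—it belongs to $H^2(\mathcal{P})$. Pulling back gives $\zeta_-\Psi\in H^2(\Omega_s)$, and likewise $\zeta_+\Psi$ by the $y$-axis symmetry of $\Omega_s$ and the evenness of $h_s$; summing yields $\Psi\in H^2(\Omega_s)$.

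\emph{Estimate and conclusion.} With $\Psi\in H^2(\Omega_s)$ established, I would apply Proposition~\ref{prop-aprioriH2} with $\mathcal{A}=\mathbb{I}$ to $\zeta_\pm\Psi$ near each corner and combine it with the estimate for $\zeta_0\Psi$ from standard elliptic theory; choosing $r$ small enough to absorb the perturbative contributions $\|\mathbb{I}-\mathfrak{U}^T\mathfrak{U}\|_{L^\infty}\|\nabla^2(\zeta_-\Psi)\|_{L^2}$ and $\|\mathbb{I}-\mathfrak{U}\|_{C^{0,\gamma}}\|\nabla(\zeta_-\Psi)\|_{H^1}$ (controlled via property (7)) into the left-hand side, one obtains $\|\Psi\|_{\mathring{H}^2(\Omega_s)}\le C\big(\|f\|_{L^2(\Omega_s)}+\|g_1\|_{H^{1/2}(\Gamma)}+\|g_2\|_{H^{1/2}(\Gamma_w)}+\|\nabla\Psi\|_{L^2(\Omega_s)}\big)$, and the last term is removed using the variational bound from the first step, giving \eqref{reg-est-neu}. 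The step I expect to be the main obstacle is the corner analysis: checking that the transported, localized function is indeed the variational solution of a genuine convex-polygon Neumann problem, so that the Grisvard decomposition certifies $H^2$-membership and not merely an a priori inequality, while at the same time keeping the variable-coefficient perturbation produced by $\mathfrak{U}$ under control uniformly in the small parameter $r$.
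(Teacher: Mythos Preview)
Your overall architecture—variational solution, localization away from and near the corners, flattening via $\mathcal{T}$, reduction to a convex polygon—matches the paper's. The step you flag as ``the main obstacle'' is indeed the crux, and your proposed resolution has a genuine circularity that the paper handles by a different device.

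Concretely: after flattening, $(\zeta_-\Psi)\circ\mathcal{T}$ solves the variable-coefficient problem $L_{\mathfrak{U}}\widehat{\Psi}=(\widehat{f},\widehat{g})$ with $L_{\mathfrak{U}}=(\nabla\cdot(\mathfrak{U}^T\mathfrak{U}\nabla),\,\tfrac{\mathfrak{U}N}{|\mathfrak{U}N|}\cdot\mathfrak{U}\nabla)$. You propose to rewrite this as the constant-coefficient Neumann problem $\Delta\widehat{\Psi}=\widehat{f}+\nabla\cdot((\mathbb{I}-\mathfrak{U}^T\mathfrak{U})\nabla\widehat{\Psi})$ and invoke Proposition~\ref{prop-solva-Neu} or the Grisvard decomposition~\eqref{decomposition}. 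But that decomposition requires the interior datum in $L^2$, and $\nabla\cdot((\mathbb{I}-\mathfrak{U}^T\mathfrak{U})\nabla\widehat{\Psi})\in L^2$ already presupposes $\widehat{\Psi}\in H^2$. Proposition~\ref{prop-aprioriH2} does not help here: it is an \emph{a priori} bound for functions assumed to lie in $H^2$, not an existence/regularity statement.

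The paper breaks this circularity by a Fredholm index argument rather than a direct perturbation. It first analyzes the constant-coefficient operator $\Delta_N$ on $\mathcal{T}^{-1}(W)$ abstractly (not applied to $\widehat{\Psi}$), using Proposition~\ref{prop-solva-Neu} after a further localization to show $\mathrm{ind}\,\Delta_N=0$. Then the a priori estimate of Proposition~\ref{prop-aprioriH2} shows that every member of the homotopy $A(\tau)=\tau L_{\mathfrak{U}}+(1-\tau)\Delta_N$ is semi-Fredholm with one-dimensional kernel; Kato's stability of the index then forces $\mathrm{ind}\,L_{\mathfrak{U}}=0$, so $L_{\mathfrak{U}}$ is onto the codimension-one subspace defined by the compatibility condition, which yields $\widehat{\Psi}\in H^2$ without ever needing to put second derivatives of $\widehat{\Psi}$ on the right-hand side. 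Your perturbative idea could in principle be salvaged by a contraction argument in $\mathring{H}^2$ (in the spirit of Proposition~\ref{prop-reg-trans}), but that would require setting up the fixed-point map explicitly and verifying the polygon compatibility condition at every iteration, which you have not done.
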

\begin{proof}
The proof is an adaptation of Theorem 5.2.2 in \cite{Grisvard1985} to the Neumann case. The existence and uniqueness up to an additive constant of a variational solution $\Psi\in H^1(\Omega_s)$ to \eqref{Neu-pb-stat} are standard. Since $\partial \Omega_s$ is smooth away from the corners, we use standard elliptic theory \cite{AgmDouNir1964} to infer that $\Psi\in H^2(U)$ where $U$ is a $C^2$-domain contained in $\Omega_s$ whose boundary coincides with $\partial \Omega_s$ except close to $(\pm1, h_s(\pm 1))$.\\
We now address the regularity issue near the corners of $\Omega_s$. Let $\chi:\Omega_s\rightarrow \mathbb{R} $ be a smooth cut-off function with $0\leq \chi \leq 1 $ such that $\chi=1$ in $B_{r/4}(-1,h_s(-1))$, $\chi=0$ in $\Omega_s \setminus B_{r/2}(-1,h_s(-1))$ with radius $r$ satisfying \eqref{r} and $\partial_n \chi=0$ on $\partial \Omega_s$. Note that the support of $\chi$ is strictly contained in $\Omega_s \cap B_{r/2}(-1, h_s(-1))$.
Then,  $\widetilde{\Psi}=\chi\Psi\in H^1(W)$ satisfies
\begin{equation*}
\begin{cases}
    \Delta \widetilde{\Psi}= \widetilde{f} \quad & \text{in} \quad W,\\[5pt]
\nabla \widetilde{\Psi}  \cdot n = \widetilde{g}  & \text{on} \quad \partial W ,
\end{cases}
\end{equation*}
with $\widetilde{f}=\Delta (\chi \Psi)\in L^2(W)$ and $\widetilde{g}\in H^{1/2}(\partial W)$ such that
\begin{align*}\widetilde{g}= \chi g_1 \quad \text{on}& \quad \partial W \cap \Gamma, \quad \widetilde{g}= \chi g_2  \quad \text{on} \quad \partial W \cap \Gamma_w, \quad
\widetilde{g}=0 \quad \text{elsewhere on} \quad \partial W,
\end{align*}

 where $$\Omega_s \cap B_{r/2}(-1,h_s(-1)) \subset W \subset \Omega_s \cap B_r(-1,h_s(-1))$$ is a curvilinear polygon of class $C^2$ with only one angular point such that its boundary coincides with $\partial \Omega_s$ near $(-1, h_s(-1))$. We now consider the smooth diffeomorphism $\mathcal{T}^{-1}$ introduced in Proposition \ref{transformation}. From properties (2)-(3), we know that $\mathcal{T}^{-1}(\Gamma\cap W ) \subset \Gamma_+$,
$\mathcal{T}^{-1}(\Gamma_w\cap W ) \subset \Gamma_-$ with $\Gamma_\pm$ as in \eqref{sides} and $\mathcal{T}^{-1}(W)$ coincides with $\mathcal{K}_\omega$, with $\omega$ as in \eqref{angle-corner}, near $(0,0)$. Then, $\widehat{\Psi}= \widetilde{\Psi} \circ \mathcal{T} \in H^{1}(\mathcal{T}^{-1}(W))$ satisfies
\begin{equation}\label{bv-Ufrak}
\begin{cases}
    \nabla \cdot (\mathfrak{U}^T\mathfrak{U}\nabla \widehat{\Psi}) = \widehat{f}\quad & \text{in} \quad \mathcal{T}^{-1}(W),\\[5pt]
    \mathfrak{U}\nabla \widehat{\Psi} \cdot \frac{\mathfrak{U}N}{|\mathfrak{U}N|}= \widehat{g}& \text{on} \quad \partial\mathcal{T}^{-1}( W),
\end{cases}
\end{equation} where $\mathfrak{U}=(\nabla \mathcal{T})^{-T}$ is given by \eqref{Ufrak}, $N$ is the outward normal vector to $\partial \mathcal{T}^{-1}(W)$,
$\widehat{f}= \widetilde{f} \circ \mathcal{T} \in L^2(\mathcal{T}^{-1}(W))$
and $\widehat{g}= \widetilde{g}\circ \mathcal{T}\in H^{1/2}(\partial \mathcal{T}^{-1}(W))$.
Denoting by $L_\mathfrak{U}$ the operator associated with \eqref{bv-Ufrak}
 $$L_\mathfrak{U}=\left(\nabla\cdot (\mathfrak{U}^T\mathfrak{U}\nabla), \tfrac{\mathfrak{U}N}{|\mathfrak{U}N|} \cdot \mathfrak{U}\nabla\right): \ H^2(\mathcal{T}^{-1}(W)) \rightarrow \ L^2(\mathcal{T}^{-1}(W))\times H^{1/2}(\partial \mathcal{T}^{-1}(W)),$$ the properties of $\mathfrak{U}$ imply that the principal part of $L_\mathfrak{U}$ with coefficients frozen at $(0,0)$ is exactly the Neumann operator $$\quad \Delta_N= \left(\Delta, n\cdot \nabla\right) : \ H^2(\mathcal{T}^{-1}(W)) \rightarrow \ L^2(\mathcal{T}^{-1}(W))\times H^{1/2}(\partial \mathcal{T}^{-1}(W)).$$
 We now focus on the boundary value problem
\begin{equation*}
    \begin{cases}
    \Delta \widehat{\Psi}= \widehat{f}\quad & \text{in} \quad \mathcal{T}^{-1}(W),\\[5pt]
    \nabla \widehat{\Psi} \cdot n = \widehat{g}\quad & \text{on} \quad \partial \mathcal{T}^{-1}(W).
    \end{cases}
\end{equation*}
Arguing as before, we introduce a
smooth cut-off function $\zeta$, such that $\zeta=1$ near $(0,0)$ and $\partial_n \zeta=0$ on $\partial \mathcal{T}^{-1}(W)$, and choose its support small enough in such a way that $(1-\zeta)\widehat{\Psi}\in H^2(\mathcal{T}^{-1}(W))$ and $\zeta \widehat{\Psi}\in H^1(\mathcal{P})$  solves
\begin{equation}\label{Neu-polygon}
\begin{cases}
    \Delta (\zeta \widehat{\Psi})= \zeta \widehat{f} + \Delta \zeta \widehat{\Psi} + 2\nabla\zeta \cdot \nabla \widehat{\Psi}\quad & \text{in} \quad \mathcal{P},\\[5pt]
    \nabla (\zeta\widehat{\Psi}) \cdot n = \zeta\widehat{g} \quad & \text{on} \quad \partial \mathcal{P},
    \end{cases}
\end{equation}where $\mathcal{P}$ is a convex polygon that coincides with $\partial \mathcal{T}^{-1}(W)$ near $(0,0)$. Clearly, the data in \eqref{Neu-polygon} belong to $L^2(\mathcal{P})$ and $H^{1/2}(\mathcal{P})$, respectively. Applying Proposition \ref{prop-solva-Neu}, we have that $\zeta \widehat{\Psi}\in H^2(\mathcal{P})$ provided that the data satisfies the corresponding compatibility condition. We conclude that $\widehat{\Psi}\in H^{2}(\mathcal{T}^{-1}(W))$ provided
that \begin{equation}\label{compacond}
\int_{\mathcal{T}^{-1}(W)}\widehat{f} = \int_{\partial \mathcal{T}^{-1}(W)}\widehat{g},\end{equation}
and it is unique up to an additive constant. Thus, the kernel of $\Delta_N$ is one-dimensional, the image of $H^2(\mathcal{T}^{-1}(W))$ through $\Delta_N$ is a closed subspace of codimension $1$ and its index is
\begin{equation}\label{indLapNe}
    \mathrm{ind}\ \Delta_N = \mathrm{dim} \ \mathrm{Ker} \Delta_N - \mathrm{codim \ Im }\Delta_N = 1 - 1=0.
\end{equation}
Returning to the boundary-value problem \eqref{bv-Ufrak},
let us introduce the family of operators from $H^2(\mathcal{T}^{-1}(W))$ to $L^2(\mathcal{T}^{-1}(W)) \times H^{1/2}(\partial\mathcal{T}^{-1}(W))$ $$A(\tau)= \tau  L_\mathfrak{U} + (1-\tau)\Delta_N, \quad \tau\in [0,1],$$
that continuously depends on $\tau$. Proposition \ref{prop-aprioriH2} yields that $\mathrm{dim \ Ker} A(\tau)=1$ and that $A(\tau)$ is a semi-Fredholm operator for all $\tau\in [0,1]$.  Hence we known from Theorem IV.5.17 in \cite{Kato1995} that its index is independent of $\tau$ and   \eqref{indLapNe} yields
$$ \mathrm{ind}\ L_\mathfrak{U}= \mathrm{ind}\ A(1)= \mathrm{ind}\ A(0)=\mathrm{ind} \ \Delta_N =0 .$$
Since $\mathrm{dim \ Ker} L_{\mathfrak{U}}=1$ due to Proposition \ref{prop-aprioriH2}, we obtain that the image of $H^2(\mathcal{T}^{-1}(W))$ through $L_{\mathfrak{U}}$ is a closed subspace of codimension $1$ in $L^2(\mathcal{T}^{-1}(W))\times H^{1/2}(\partial\mathcal{T}^{-1}(W))$ formed by the pairs $(\widehat{f},\widehat{g})$ that satisfy \eqref{compacond}.
Changing coordinates back to $\Omega_s$, this implies that $\widetilde{\Psi}\in H^2(W)$ provided that $$\int_{W} \widetilde{f} = \int_{\partial W} \widetilde{g}.$$
Due to the symmetry of $\Omega_s$ with respect to the $y$-axis close to the surface $h_s$, after introducing a reflection, the same analysis can be developed in a neighborhood of the contact point $(1, h_s(1))$. Gathering together with the regularity assertion away from the corners, we conclude that the image of $H^2(\Omega_s)$ through the operator $\left(\Delta, \frac{N_{h_s}}{|N_{h_s}|}\cdot \nabla , n\cdot \nabla\right)$ is a closed subspace of codimension $1$ in $L^2(\Omega_s)\times H^{1/2}(\Gamma)\times H^{1/2}(\Gamma_w)$ formed by the triples $(f, g_1,g_2)$ that satisfy \begin{equation}\label{compa-triple}\int_{\Omega_s} f = \int_{\Gamma}g_1 + \int_{\Gamma_w} g_2.\end{equation}In other words, there exists a solution $\Psi\in H^2(\Omega_s)$ to \eqref{Neu-pb-stat} provided that \eqref{compa-triple} holds.
Moreover, by Proposition \ref{prop-aprioriH2} it is unique up to an additive constant and there exists a constant $C>0$ such that
\begin{equation*}
\|\Psi\|_{\mathring{H}^2(\Omega_s)} \leq C \big(\|f\|_{L^2(\Omega_s)} +\|g_1\|_{H^{1/2}(\Gamma)}+ \|g_2\|_{H^{1/2}(\Gamma_w)}\big).
\end{equation*}
\end{proof}

	\subsection{Transformed Neumann problem in $\Omega_s$}We now consider the transformed Neumann problem
\begin{equation}\label{trans-neumann}
	\begin{cases}\begin{aligned}
		\nabla \cdot (A_\eta\nabla \Psi)=f\quad &\mbox{in} \quad \Omega_s,\\[5pt]
		\Sigma_\eta \nabla \Psi \cdot N_h =g_1 \quad &\mbox{on}\quad \Gamma,\\[5pt]
		\Sigma_\eta \nabla \Psi \cdot n=g_2    \quad &\mbox{on} \quad \Gamma_w,
	\end{aligned}\end{cases}\\[5pt]
\end{equation} with $\Sigma_\eta$ and $A_\eta$ given by \eqref{Sigmaeta}-\eqref{Aeta}.  Throughout the rest of this section, we assume that $\eta\in H^{3/2+}(\mathcal{I})$. The following second-order regularity assertion holds:

\begin{proposition}\label{prop-reg-trans}
	Let $f\in L^2(\Omega_s)$, $g_1\in H^{1/2}(\Gamma)$, $g_2\in H^{1/2}(\Gamma_w)$ satisfy
    \begin{equation*}
        \int_{\Omega_s}f = \int_{\Gamma} g_1 + \int_{\Gamma_w}\det(J_\eta)g_2
    \end{equation*}and $\omega\in (0, \pi)$ be as in \eqref{angle-corner}. There exists $\alpha>0$ such that, for $\|\eta\|_{H^{3/2+}(\mathcal{I})}< \alpha$, \eqref{trans-neumann} admits a unique solution $\Psi\in H^2(\Omega_s)$ up to an additive constant. Moreover, there exists a constant $C>0$ such that
	\begin{equation}\label{reg-est-transneu}
	\| \Psi\|_{\mathring{H}^2(\Omega_s)} \leq C\big( \|f\|_{L^2(\Omega_s)}+ \|g_1\|_{H^{1/2}(\Gamma)}+ \|g_2\|_{H^{1/2}(\Gamma_w)}\big).
	\end{equation}
	
\end{proposition}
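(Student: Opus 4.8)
The plan is to view \eqref{trans-neumann} as a small divergence-form perturbation of the flat Neumann problem \eqref{Neu-pb-stat}, whose $H^2$-solvability is granted by Theorem \ref{theo-reg-neu}. First I would put the conormal conditions in symmetric form: since $A_\eta=\det(J_\eta)\,\Sigma_\eta^T\Sigma_\eta$ and $\Sigma_\eta n=n$ on $\Gamma_w$ (as already used in \eqref{green-Phi}), the third line of \eqref{trans-neumann} reads $A_\eta\nabla\Psi\cdot n=\det(J_\eta)g_2$, while the chain-rule identity $\det(J_\eta)\Sigma_\eta N_{h_s}=N_h$ on $\Gamma$ turns the second line into $A_\eta\nabla\Psi\cdot\tfrac{N_{h_s}}{|N_{h_s}|}=g_1/|N_{h_s}|$. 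Hence \eqref{trans-neumann} is precisely the divergence-form Neumann problem for the operator $\nabla\cdot(A_\eta\nabla\,\cdot\,)$, and—testing its weak form against the constant—the hypothesis on $(f,g_1,g_2)$ is exactly the associated compatibility condition. Writing $A_\eta=\mathbb{I}+B_\eta$, Lemma \ref{lemma-diffeo}, its $L^\infty$ bounds on $\eta^\dag$ and $\nabla\eta^\dag$, and the smoothing estimate \eqref{regular-prop} yield, for $\|\eta\|_{H^{3/2+}(\mathcal I)}<\alpha$ with $\alpha$ small: uniform ellipticity of $A_\eta$; $\|B_\eta\|_{L^\infty(\Omega_s)}\le C\|\eta\|_{H^{3/2+}(\mathcal I)}$; a control of the trace of $B_\eta$ on $\partial\Omega_s$ in a space $H^{1/2+}$ (or $C^{0,\gamma}$ with $\gamma>1/2$) by $C\|\eta\|_{H^{3/2+}(\mathcal I)}$, using in particular the boundary trace of $\partial_y\eta^\dag$; and $\|\nabla\cdot(B_\eta\nabla\Psi)\|_{L^2(\Omega_s)}\le C\|\eta\|_{H^{3/2+}(\mathcal I)}\|\Psi\|_{\mathring H^2(\Omega_s)}$.

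Second, I would record the variational solvability: for $\|\eta\|_{H^{3/2+}(\mathcal I)}<\alpha$ the bilinear form $(\Psi,\varphi)\mapsto\int_{\Omega_s}A_\eta\nabla\Psi\cdot\nabla\varphi$ is coercive on the quotient $H^1(\Omega_s)/\mathbb R$, so Lax–Milgram produces, under the compatibility condition, a solution unique up to an additive constant, with $\|\nabla\Psi\|_{L^2(\Omega_s)}\le C\big(\|f\|_{L^2(\Omega_s)}+\|g_1\|_{H^{1/2}(\Gamma)}+\|g_2\|_{H^{1/2}(\Gamma_w)}\big)$.

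Third—the crux—I would promote this solution to $H^2$ by a Neumann-series argument. Normalizing to zero mean, let $\mathcal L^{-1}$ denote the solution operator of Theorem \ref{theo-reg-neu}, bounded from the compatible triples of $L^2(\Omega_s)\times H^{1/2}(\Gamma)\times H^{1/2}(\Gamma_w)$ into $\mathring H^2(\Omega_s)$. Rewriting $\nabla\cdot(A_\eta\nabla\Psi)=f$ as $\Delta\Psi=f-\nabla\cdot(B_\eta\nabla\Psi)$ and moving the $B_\eta\nabla\Psi$ terms to the boundary data as well, the problem becomes the fixed-point equation $\Psi=\mathcal L^{-1}\big(f,\tfrac{g_1}{|N_{h_s}|},\det(J_\eta)g_2\big)+\mathcal L^{-1}\mathcal K_\eta\Psi$ on $\mathring H^2(\Omega_s)$, where $\mathcal K_\eta$ is linear in $\nabla\Psi$ through $B_\eta$; by the divergence theorem the argument of $\mathcal L^{-1}$ remains compatible along the iteration precisely because of the $\det(J_\eta)$-weight, as long as the iterates lie in $H^2$. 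Using the estimates above together with a one-dimensional product estimate $\|uv\|_{H^{1/2}(\mathcal I)}\le C\|u\|_{H^{1/2+}(\mathcal I)}\|v\|_{H^{1/2}(\mathcal I)}$ and the trace theorem, one gets $\|\mathcal L^{-1}\mathcal K_\eta\|_{\mathring H^2\to\mathring H^2}\le C\|\eta\|_{H^{3/2+}(\mathcal I)}<1$ for $\alpha$ small; hence the affine map is a contraction, its fixed point is the sought $H^2$-solution, it agrees with the variational one by uniqueness, and summing the geometric series yields \eqref{reg-est-transneu}. As an alternative one could reprove the $H^2$-regularity directly by localizing near the two corners, flattening with the map $\mathcal T$ of Proposition \ref{transformation}, invoking Proposition \ref{prop-solva-Neu} on an auxiliary convex polygon and absorbing the small perturbations in the spirit of the proof of Proposition \ref{prop-aprioriH2}, together with divergence-form elliptic regularity away from the corners and the variational estimate to absorb $\|\nabla\Psi\|_{L^2(\Omega_s)}$.

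The step I expect to be the main obstacle is this $H^2$-upgrade, for the simple reason that $A_\eta$, being built from $\eta\in H^{3/2+}(\mathcal I)$, is merely Hölder continuous rather than smooth: the whole scheme must therefore be carried out using only $L^\infty$ control of $B_\eta$ in the interior and barely supercritical ($H^{1/2+}$, respectively $C^{0,\gamma}$ with $\gamma>1/2$) control of its boundary trace, and one has to check carefully that the smoothing estimate \eqref{regular-prop} of Lemma \ref{lemma-diffeo}, combined with the embeddings of $H^{3/2+}(\mathcal I)$, genuinely delivers both the smallness needed for the contraction (with constants controlled by $\|\eta\|_{H^{3/2+}(\mathcal I)}$) and the fractional regularity needed for the critical product estimate near the corners; bookkeeping the $\det(J_\eta)$-weight so that the compatibility condition is preserved throughout is a minor additional point.
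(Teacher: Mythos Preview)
Your proposal is correct and follows essentially the same approach as the paper: both rewrite \eqref{trans-neumann} as a perturbation of the flat Neumann problem, invoke Theorem \ref{theo-reg-neu} to obtain the bounded inverse $\mathcal L^{-1}$, and then run a contraction argument on $\mathring H^2(\Omega_s)$ using the smallness of $\|\eta\|_{H^{3/2+}(\mathcal I)}$ together with the regularizing property \eqref{regular-prop} and the critical product estimates of Lemma \ref{lemma-prodest}. The paper writes the fixed-point map directly as $\Psi\mapsto\mathcal L^{-1}(F(\Psi),G_1(\Psi),G_2(\Psi))$ with the perturbative terms $(\mathbb I-A_\eta)$ and $(\mathbb I-\Sigma_\eta)$ appearing explicitly, whereas you package the same objects into $B_\eta=A_\eta-\mathbb I$ and $\mathcal K_\eta$; your extra care in tracking that the $\det(J_\eta)$-weighted compatibility condition is preserved along the iteration is a point the paper leaves implicit.
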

\begin{proof}
Let us define the mapping \begin{equation*}\begin{aligned}
	\mathcal{L} \ : \ \mathring{H}^2(\Omega_s) \  &\rightarrow \ L^2(\Omega_s) \times H^{1/2}(\Gamma)\times H^{1/2}(\Gamma_w)\\[5pt]
	  \ \Psi \quad & \mapsto \quad \left(\Delta \Psi, \nabla \Psi\cdot \tfrac{N_{h_s}}{|N_{h_s}|}, \nabla \Psi\cdot n\right).
\end{aligned}
\end{equation*}	Then, \eqref{trans-neumann} can be recast as the Neumann problem
	\begin{equation}\label{trans-neumann-comp}
		\mathcal{L}(\Psi) = (F(\Psi),G_1(\Psi),G_2(\Psi))
	\end{equation}
with
\begin{equation*}
\begin{aligned}
&F(\Psi)= f+ \nabla \cdot\left((\mathbb{I}- A_\eta)\nabla \Psi\right),\\[5pt] &G_1(\Psi)=
\tfrac{1}{|N_{h_s}|}\left(
g_1 + (\mathbb{I}-\Sigma_\eta)\nabla \Psi\cdot N_h + \partial_x \Psi \ \eta'
\right),
\\[5pt]&
G_2(\Psi)= g_2 + (\mathbb{I}-\Sigma_\eta)\nabla \Psi\cdot n.\\[5pt]
\end{aligned}
\end{equation*}
For $\|\eta\|_{H^{3/2+}(\mathcal{I})}<\alpha $ with $\alpha$ as in Lemma \ref{lemma-diffeo}, we know from Lemma \ref{lemma-prodest} and the regularizing property \eqref{regular-prop} that
$$\Psi\in \mathring{H}^2(\Omega_s) \quad \Rightarrow \quad \left( F(\Psi), G_1(\Psi), G_2(\Psi)\right)\in L^2(\Omega_s) \times H^{1/2}(\Gamma)\times H^{1/2}(\Gamma_w).$$  Since $\omega\in(0, \pi)$, we apply Theorem \ref{theo-reg-neu} to \eqref{trans-neumann-comp} and obtain that $\mathcal{L} $ is an isomorphism. Hence, \eqref{trans-neumann-comp} is equivalent to
\begin{equation}\label{fixed-point-eq}
\Psi = \mathcal{L}^{-1}\left(F(\Psi),G_1(\Psi),G_2(\Psi)\right).
\end{equation}
Moreover, Lemma \ref{lemma-prodest} and a trace inequality yields the existence of a polynomial $P$ with non-negative coefficients and $P(0)=0$ such that
\begin{equation}\label{est-FG1G2}
	\begin{aligned}
&	\|F(\Psi)\|_{L^2(\Omega_s)} + \|G_1(\Psi)\|_{H^{1/2}(\Gamma)} + \|G_2(\Psi)\|_{H^{1/2}(\Gamma_w)}\\[5pt]
	&\leq \|f\|_{L^2(\Omega_s)}+ C\|g_1\|_{H^{1/2}(\Gamma)}+ \|g_2\|_{H^{1/2}(\Gamma_w)} +  P (\|\eta\|_{H^{3/2+}(\mathcal{I})}) \|\Psi\|_{\mathring{H}^2(\Omega_s)}\end{aligned}
\end{equation} and
\begin{equation}\label{est-contraFG1G2}
	\begin{aligned}
&	\|F(\Psi_1) - F(\Psi_2)\|_{L^2(\Omega_s)} + \|G_1(\Psi_1) - G_1(\Psi_2)\|_{H^{1/2}(\Gamma)} \\[5pt]
	&+ \|G_2(\Psi_1)- G_2(\Psi_2)\|_{H^{1/2}(\Gamma_w)}\leq  P (\|\eta\|_{H^{3/2+}(\mathcal{I})}) \|\Psi_1-\Psi_2\|_{\mathring{H}^2(\Omega_s)}.\\[5pt]
		\end{aligned}
\end{equation}Due to \eqref{est-FG1G2}-\eqref{est-contraFG1G2}, choosing a possibly smaller $\alpha$ implies that the mapping $$\Psi \mapsto \mathcal{L}^{-1}\left(F(\Psi),G_1(\Psi),G_2(\Psi)\right)$$ is a contraction. Then, \eqref{fixed-point-eq} admits a unique fixed point in $\mathring{H}^2(\Omega_s)$ or, equivalently, \eqref{trans-neumann} admits a unique solution $\Psi\in H^2(\Omega_s)$ up to an additive constant. In addition, combining \eqref{fixed-point-eq} with \eqref{est-FG1G2} yields the regularity estimate \eqref{reg-est-transneu}.
\end{proof}

\subsection{Elliptic estimates for $\Phi$ and $\partial_t \Phi$}
We want to apply Proposition \ref{prop-reg-trans} to the elliptic problems in $\Omega_s$ for $\Phi$ and $\partial_t \Phi$, respectively. Recall that the elliptic problem solved by $ \Phi$ is given by \eqref{evo-eq-per}, the first and the third lines in \eqref{DNpb-per}, namely,
\begin{equation}\label{NNpb-in}
	\begin{aligned}
		\nabla \cdot (A_{\eta}\nabla \Phi)=0 \quad &\mbox{in} \quad \Omega_s,\\[5pt]
		\Sigma_{\eta} \nabla \Phi \cdot N_h=\partial_t \eta \quad &\mbox{on}\quad \Gamma,\\[5pt]
		\Sigma_{\eta} \nabla \Phi \cdot n=0    \quad &\mbox{on} \quad \Gamma_w.
	\end{aligned}
\end{equation}
 We directly obtain the following second-order estimate:
\begin{proposition}\label{H2Phi}
Let $\omega\in (0, \pi)$ be as in \eqref{angle-corner} and assume that $\|\eta\|_{H^{3/2+}(\mathcal{I})}< \alpha$ with $\alpha$ as in Proposition \ref{prop-reg-trans}. Then, the solution $\Phi$ to \eqref{NNpb-in} belongs to $H^2(\Omega_s)$ and there exists a constant $C>0$ such that
\begin{equation*}
    \| \Phi\|^2_{\mathring{H}^2(\Omega_s)}\leq C \mathcal{E}_\parallel.
\end{equation*}
\end{proposition}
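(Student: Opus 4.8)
The plan is to observe that \eqref{NNpb-in} is exactly the transformed Neumann problem \eqref{trans-neumann} with data $f=0$, $g_1=\partial_t\eta$ and $g_2=0$, and then to invoke Proposition \ref{prop-reg-trans}. First I would check its hypotheses. The angle $\omega$ from \eqref{angle-corner} lies in $(0,\pi)$ because the partial wetting condition $\llbracket\gamma\rrbracket/\sigma\in(-1,1)$ forces $h_s'(\pm1)$ to be finite, via Proposition \ref{sta-exiuni}; and the smallness requirement $\|\eta\|_{H^{3/2+}(\mathcal{I})}<\alpha$ is precisely the standing hypothesis of the present proposition, with $\alpha$ chosen as in Proposition \ref{prop-reg-trans}. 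For the compatibility condition, integrating $\nabla\cdot(A_\eta\nabla\Phi)=0$ over $\Omega_s$ and using the divergence theorem, the boundary term over $\Gamma_w$ vanishes since $\Sigma_\eta n=n$ there together with $g_2=0$, while the term over $\Gamma$ equals $\int_{\mathcal{I}}\partial_t\eta\,dx$ after using $\det(J_\eta)\Sigma_\eta\tfrac{N_{h_s}}{|N_{h_s}|}=\tfrac{N_h}{|N_{h_s}|}$; hence the compatibility condition reduces to $\int_{\mathcal{I}}\partial_t\eta=0$, which holds because $\eta$ has zero mean in $\mathcal{I}$ by assumption, and therefore so does $\partial_t\eta$.

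With the hypotheses verified, Proposition \ref{prop-reg-trans} yields a solution $\Phi\in H^2(\Omega_s)$, unique up to an additive constant, which is harmless in the quotient space $\mathring{H}^2(\Omega_s)$, together with the estimate
\[
\|\Phi\|_{\mathring{H}^2(\Omega_s)}\leq C\big(\|f\|_{L^2(\Omega_s)}+\|g_1\|_{H^{1/2}(\Gamma)}+\|g_2\|_{H^{1/2}(\Gamma_w)}\big)=C\,\|\partial_t\eta\|_{H^{1/2}(\Gamma)}.
\]
Since $\Gamma$ is the graph of the smooth function $h_s$ over $\mathcal{I}$, the composition bound $\|\partial_t\eta\|_{H^{1/2}(\Gamma)}\leq C(h_s)\,\|\partial_t\eta\|_{H^{1/2}(\mathcal{I})}$ holds, and the continuous embedding $H^1(\mathcal{I})\hookrightarrow H^{1/2}(\mathcal{I})$ gives $\|\partial_t\eta\|_{H^{1/2}(\Gamma)}\leq C\,\|\partial_t\eta\|_{H^1(\mathcal{I})}$. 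Squaring and recalling the definition \eqref{basic-en} of $\mathcal{E}_\parallel$, for which $\|\partial_t\eta\|_{H^1(\mathcal{I})}^2\leq\mathcal{E}_\parallel$, we conclude $\|\Phi\|^2_{\mathring{H}^2(\Omega_s)}\leq C\mathcal{E}_\parallel$.

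I do not expect a serious obstacle here: the analytic difficulty has already been absorbed into Proposition \ref{prop-reg-trans} (and, upstream, into Theorem \ref{theo-reg-neu} and the fact that the Neumann pencil eigenvalues $m\pi/\omega$ avoid the interval $(-1,0)$ when $\omega<\pi$, so no $H^2$-breaking corner singularities appear). The only point requiring care will be bookkeeping the compatibility condition, i.e.\ making explicit that the Neumann datum $\partial_t\eta$ has zero integral over $\Gamma$; this is exactly where the zero-mean normalization of $\eta$ is used, so it is worth flagging since the solvability assertion is conditional on it.
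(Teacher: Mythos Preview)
Your proposal is correct and follows essentially the same approach as the paper: apply Proposition~\ref{prop-reg-trans} to \eqref{NNpb-in} with $f=0$, $g_1=\partial_t\eta$, $g_2=0$, and then bound $\|\partial_t\eta\|_{H^{1/2}}$ by $\mathcal{E}_\parallel$. You are simply more explicit than the paper about verifying the hypotheses, in particular the compatibility condition via the zero-mean assumption on $\eta$, which the paper's one-line proof leaves implicit.
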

\begin{proof}
    Combining the definition of $\mathcal{E}_\parallel$ in \eqref{basic-en} with Proposition \ref{prop-reg-trans}, it follows that $\Phi\in H^2(\Omega_s)$ and there exists a constant $C>0$ such that
    \begin{equation*}
        \| \Phi\|^2_{\mathring{H}^2(\Omega_s)}\leq C \|\partial_t \eta\|^2_{H^{1/2}(\mathcal{I})}\leq C \mathcal{E}_\parallel.
    \end{equation*}\end{proof}

Recall that the elliptic problem for $\partial_t\Phi$ is given by \eqref{evo-eq-per1}, the first and the third lines in \eqref{DNpb-per1}, namely,
\begin{equation}\label{NNpb-in1}
	\begin{aligned}
		\nabla \cdot (A_{\eta}\nabla \partial_t \Phi)={F^1_2} \quad &\mbox{in} \quad \Omega_s,\\[5pt]
		\Sigma_{\eta}\nabla\partial_t\Phi \cdot N_{h}= \partial_t^2\eta  - {F^1_1}\quad &\mbox{on} \quad \Gamma,\\[5pt]
		\Sigma_{\eta} \nabla \partial_t\Phi \cdot n=F^1_3 \quad &\mbox{on} \quad \Gamma_w,
	\end{aligned}
\end{equation}
with $F^1_i$ as in \eqref{F1}. Then, we derive the following second-order estimate:
\begin{proposition}\label{H2PhiD}
Let $\omega\in (0, \pi)$ be as in \eqref{angle-corner} and assume that $\|\eta\|_{H^{3/2+}(\mathcal{I})}< \alpha$ with $\alpha$ as in Proposition \ref{prop-reg-trans}. Then, the solution $\partial_t \Phi$ to \eqref{NNpb-in1} belongs to $H^2(\Omega_s)$ and there exists a constant $C>0$ such that
\begin{equation*}
    \|\partial_t \Phi\|^2_{\mathring{H}^2(\Omega_s)}\leq C \left(\mathcal{D}_\parallel + \mathcal{E} \mathcal{D}\right).
\end{equation*}
\end{proposition}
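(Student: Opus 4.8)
The plan is to apply Proposition~\ref{prop-reg-trans} to the transformed Neumann problem \eqref{NNpb-in1} that $\partial_t\Phi$ solves, i.e.\ with interior source $f=F^1_2$, Neumann data $g_1=\partial_t^2\eta-F^1_1$ on $\Gamma$ and $g_2=F^1_3$ on $\Gamma_w$, the $F^1_i$ being given by \eqref{F1}. Since $\partial_t\Phi$ is the time derivative of the solution $\Phi$ to \eqref{NNpb-in}, it is already a genuine solution of \eqref{NNpb-in1} (the compatibility condition being obtained by differentiating in time the one for \eqref{NNpb-in}), so under the standing assumption $\|\eta\|_{H^{3/2+}(\mathcal{I})}<\alpha$ Proposition~\ref{prop-reg-trans} yields $\partial_t\Phi\in H^2(\Omega_s)$ together with
\begin{equation*}
\|\partial_t\Phi\|^2_{\mathring{H}^2(\Omega_s)}\leq C\left(\|F^1_2\|^2_{L^2(\Omega_s)}+\|\partial_t^2\eta\|^2_{H^{1/2}(\Gamma)}+\|F^1_1\|^2_{H^{1/2}(\Gamma)}+\|F^1_3\|^2_{H^{1/2}(\Gamma_w)}\right),
\end{equation*}
and it remains to bound each of the four terms by $C(\mathcal{D}_\parallel+\mathcal{E}\mathcal{D})$. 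The ingredients I would use are: Proposition~\ref{H2Phi}, giving $\|\Phi\|^2_{\mathring{H}^2(\Omega_s)}\leq C\mathcal{E}_\parallel\leq C\mathcal{E}$ (hence $\|\nabla^2\Phi\|_{L^2(\Omega_s)}$, $\|\nabla\Phi\|_{L^4(\Omega_s)}$ and the boundary traces $\|\nabla\Phi\|_{H^{1/2}(\Gamma)}$, $\|\nabla\Phi\|_{H^{1/2}(\Gamma_w)}$ are all $\leq C\mathcal{E}^{1/2}$); the regularizing property \eqref{regular-prop} applied to the linear map $\eta\mapsto\eta^\dag$, which controls $\partial_t\eta^\dag$ together with one spatial derivative — hence the time‑differentiated coefficients $\partial_t A_\eta,\partial_t\Sigma_\eta,\partial_t(\Sigma_\eta^{T}N_h)$ in the relevant interior $L^p$ and boundary $H^1$ norms — by $C\|\partial_t\eta\|_{H^{5/2}(\mathcal{I})}$, a quantity that appears squared in $\mathcal{D}$; the Poisson‑kernel bound $\|\nabla\partial_t\eta^\dag\|_{L^\infty(\Omega_s)}\leq C\|\partial_t\eta\|_{H^{3/2+}(\mathcal{I})}\leq C\|\partial_t\eta\|_{H^{5/2}(\mathcal{I})}$ of Lemma~\ref{lemma-diffeo}; the elliptic gain of Proposition~\ref{prop-controletaj}; and the product estimate of Lemma~\ref{lemma-prodest} with Lemma~\ref{lemma-R} for the $\mathcal{R}$‑contributions.

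For the boundary data the estimates are routine. First, $\|\partial_t^2\eta\|^2_{H^{1/2}(\Gamma)}\leq C\|\partial_t^2\eta\|^2_{H^{5/2}(\mathcal{I})}\leq C(\mathcal{D}_\parallel+\mathcal{E}\mathcal{D})$ by Proposition~\ref{prop-controletaj}. For $F^1_1=\partial_t(\Sigma_\eta^{T}N_h)\cdot\nabla\Phi$ I would use Lemma~\ref{lemma-prodest} and a trace theorem to get $\|F^1_1\|_{H^{1/2}(\Gamma)}\leq C\|\partial_t(\Sigma_\eta^{T}N_h)\|_{H^1(\mathcal{I})}\|\nabla\Phi\|_{H^{1/2}(\Gamma)}$, where the first factor is $\leq C\|\partial_t\eta\|_{H^{5/2}(\mathcal{I})}\leq C\mathcal{D}^{1/2}$ (by \eqref{regular-prop}, the chain rule, Lemma~\ref{lemma-R} and the smoothness of $h_s$) and the second is $\leq C\mathcal{E}^{1/2}$, so $\|F^1_1\|^2_{H^{1/2}(\Gamma)}\leq C\mathcal{E}\mathcal{D}$. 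The term $F^1_3=-\partial_t\Sigma_\eta\nabla\Phi\cdot n$ on the smooth boundary $\Gamma_w$ is handled identically.

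The main obstacle is the interior source $F^1_2=-\nabla\cdot(\partial_t A_\eta\nabla\Phi)$, which involves second spatial derivatives of $\Phi$ while $\Phi$ is known only to be $H^2$‑regular, leaving no room to spare. I would expand it as $F^1_2=-\partial_t A_\eta:\nabla^2\Phi-(\nabla\cdot\partial_t A_\eta)\cdot\nabla\Phi$ and treat the two pieces separately. For the first, $\|\partial_t A_\eta:\nabla^2\Phi\|_{L^2(\Omega_s)}\leq\|\partial_t A_\eta\|_{L^\infty(\Omega_s)}\|\nabla^2\Phi\|_{L^2(\Omega_s)}$, and the decisive point is that $\|\partial_t A_\eta\|_{L^\infty(\Omega_s)}\leq C\|\partial_t\eta\|_{H^{3/2+}(\mathcal{I})}\leq C\|\partial_t\eta\|_{H^{5/2}(\mathcal{I})}$ is controlled (squared) by $\mathcal{D}$, whereas $\|\nabla^2\Phi\|^2_{L^2(\Omega_s)}\leq C\mathcal{E}_\parallel\leq C\mathcal{E}$ by Proposition~\ref{H2Phi}; this piece therefore contributes $C\mathcal{E}\mathcal{D}$ after squaring. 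For the second piece $\nabla\cdot\partial_t A_\eta$ involves $\nabla^2\partial_t\eta^\dag$, which \eqref{regular-prop} controls in $H^1(\Omega_s)$ — hence in every $L^p(\Omega_s)$ with $p<\infty$ by the two‑dimensional Sobolev embedding — by $C\|\partial_t\eta\|_{H^{5/2}(\mathcal{I})}$, so an $L^4\times L^4$ Hölder pairing against $\nabla\Phi\in L^4(\Omega_s)$ again gives $C\mathcal{D}^{1/2}\mathcal{E}^{1/2}$; the lower‑order contributions of $\partial_t\eta^\dag$ itself are bounded in the same way. Collecting the four bounds yields $\|\partial_t\Phi\|^2_{\mathring{H}^2(\Omega_s)}\leq C(\mathcal{D}_\parallel+\mathcal{E}\mathcal{D})$. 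I expect the only tedious part to be the bookkeeping of the matrices $\partial_t A_\eta$, $\partial_t\Sigma_\eta$, $\partial_t(\Sigma_\eta^{T}N_h)$, which are rational in $\eta^\dag$ and its first derivatives; the substance of the argument is exactly the $\mathcal{E}$--$\mathcal{D}$ splitting of $F^1_2$ described above.
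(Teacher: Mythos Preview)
Your proposal is correct and follows essentially the same strategy as the paper: apply Proposition~\ref{prop-reg-trans} to \eqref{NNpb-in1} and estimate $\partial_t^2\eta$, $F^1_1$, $F^1_2$, $F^1_3$ using Proposition~\ref{prop-controletaj}, Proposition~\ref{H2Phi}, the regularizing property \eqref{regular-prop}, and the product estimates of Lemma~\ref{lemma-prodest}. The only cosmetic difference is in the treatment of $F^1_2$: the paper bounds $\|\nabla\cdot(\partial_t A_\eta\nabla\Phi)\|_{L^2}\leq\|\partial_t A_\eta\nabla\Phi\|_{H^1}$ in one step via the $H^1$ product estimate \eqref{prod-estH1} (taking $G=\partial_t A_\eta\in H^{1+}$ and $F=\nabla\Phi\in H^1$), whereas you expand the divergence and use $L^\infty\times L^2$ and $L^4\times L^4$ H\"older pairings---both routes yield the same $\mathcal{E}\mathcal{D}$ bound.
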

\begin{proof}

On the one hand, combining the product estimate \eqref{prod-estH1} with the regularizing property \eqref{regular-prop}, the fact that $\|\eta\|_{H^{3/2+}(\mathcal{I})}<\alpha$, and Proposition \ref{H2Phi} yields
    \begin{equation*}
        \begin{aligned}
        \|F_2^1\|^2_{L^2(\Omega_s)}&= \|\nabla\cdot (\partial_t A_\eta \nabla\Phi)\|_{L^2(\Omega_s)} \leq \|\partial_t A_\eta \nabla \Phi\|^2_{H^1(\Omega_s)}\\[5pt]&\leq C\|\partial_t A_\eta\|^2_{H^{1+}(\Omega_s)}\|\nabla \Phi\|^2_{H^1(\Omega_s)}
        \leq C \|\partial_t \eta^\dag\|^2_{H^{2+}(\Omega_s)}\| \Phi\|^2_{\mathring{H}^2(\Omega_s)}\\[5pt]&\leq C \|\partial_t \eta\|^2_{H^{3/2+}(\mathcal{I})}\| \Phi\|^2_{\mathring{H}^2(\Omega_s)}\leq C \mathcal{E}\mathcal{D}.
        \end{aligned}
    \end{equation*}
 On the other hand, arguing as before but using the product estimate \eqref{prod-estH12} instead of \eqref{prod-estH1}  gives the estimates
\begin{equation*}
    \begin{aligned}
      \|F^1_3\|^2_{H^{1/2}(\Gamma_w)} &=\|\partial_t \Sigma_\eta \nabla \Phi \cdot n\|^2_{H^{1/2}(\Gamma_w)}\leq C\|\partial_t \Sigma_\eta\|^2_{H^{1/2+}(\Gamma_w)} \|\nabla \Phi\|^2_{H^{1/2}(\Gamma_w)}\\[5pt]& \leq C\|\partial_t \eta^\dag\|^2_{H^{2+}(\Omega_s)}\| \nabla \Phi\|^2_{H^1(\Omega_s)}
      \leq C\|\partial_t \eta\|^2_{H^{3/2+}(\mathcal{I})}\| \Phi\|^2_{\mathring{H}^2(\Omega_s)}\leq C \mathcal{E}\mathcal{D}
    \end{aligned}
\end{equation*}
and
\begin{equation*}
    \begin{aligned}
        &\|F^1_1\|^2_{H^{1/2}(\Gamma)}= \|\partial_t (\Sigma_\eta^T N_h)\cdot \nabla \Phi\|_{H^{1/2}(\Gamma)} \\[5pt]&\leq C\big(\|\partial_t \Sigma_\eta N_h\|^2_{H^{1/2+}(\Gamma)} + \| \Sigma_\eta \partial_t N_h\|^2_{H^{1/2+}(\Gamma)}\big) \|\nabla \Phi \|^2_{H^{1/2}(\Gamma)}
        \\[5pt]&\leq C  \big(\|\partial_t\eta^\dag\|^2_{H^{2+}(\Omega_s)} (1+ \|\eta\|^2_{H^{3/2+}(\mathcal{I})})  + \|\eta^\dag\|^2_{H^{2+}(\Omega_s)}\|\partial_t \eta\|^2_{H^{3/2+}(\mathcal{I})}\big) \|\nabla \Phi\|^2_{H^1(\Omega_s)}  \\[5pt]&\leq C \big( 1+ \|\eta\|^2_{H^{3/2+}(\mathcal{I})}\big)
        \|\partial_t \eta \|^2_{H^{3/2+}(\mathcal{I})} \|\Phi\|^2_{\mathring{H}^2(\Omega_s)}\leq C \mathcal{E} \mathcal{D}.
    \end{aligned}
\end{equation*}
Furthermore, Proposition \ref{prop-controletaj} combined with the assumption on the $H^{3/2+}$-norm of $\eta$ yields the bound
$$\|\partial_t^2 \eta\|^2_{H^{3/2+}(\mathcal{I})}\leq C \left(\mathcal{D}_\parallel + \mathcal{E}\mathcal{D}\right).$$
 Thus, applying Proposition \ref{prop-reg-trans}, the solution $\partial_t \Phi$ belongs to $H^2(\Omega_s)$ and there exists a constant $C>0$ such that
\begin{align*}
    \|\partial_t \Phi\|^2_{\mathring{H}^2(\Omega_s)}\leq C\big( \|F^1_2\|^2_{L^2(\Omega_s)} + \|F^1_3\|^2_{H^{1/2}(\Gamma_w)} + \|\partial_t^2\eta- F^1_1\|^2_{H^{1/2}(\Gamma)}  \big)\leq C (\mathcal{D}_\parallel + \mathcal{E}\mathcal{D}).
\end{align*}

\end{proof}

		\section{Terms in the energy-dissipation equalities}\label{sec-NL}
	With the bounds derived in Sections \ref{sec-adddiss} and \ref{sec-ellest} at hand, we now estimate one by one the nonlinear terms appearing on the right-hand side of the basic energy-dissipation equations in Section \ref{sec-ED}. For the sake of brevity, we will assume throughout the section that $\eta$ has zero mean over $\mathcal{I}$ and
  \begin{equation}\label{bound-eta-alpha}\|\eta\|_{H^{3/2+}(\mathcal{I})}< \alpha \end{equation} with $\alpha$ as in Proposition \ref{prop-reg-trans}.
		\begin{proposition}\label{S1-bound} Let $\mathcal{S}_1$ be given by \eqref{Sj}. Then, there exists a constant $C>0$ such that
			\begin{equation*}
				|\mathcal{S}_1| \leq C \sqrt{\mathcal{E}}\mathcal{D}.
			\end{equation*}
		\end{proposition}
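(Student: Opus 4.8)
The plan is to estimate separately the four pieces of $\mathcal{S}_1$ displayed in \eqref{Sj}--\eqref{Fj}: the interior term $-\int_{\Omega_s}F_2^1\partial_t\Phi$, the two boundary terms $\int_{\Gamma_w}\det(J_\eta)F_3^1\partial_t\Phi$ and $-\int_{\Gamma}|N_{h_s}|^{-1}F_1^1\partial_t\Phi$, and the curvature-remainder term $\int_{\mathcal{I}}\sigma\mathcal{F}_1(h_s',\eta')\,dx$. Each will be bounded by Cauchy--Schwarz, the goal being to arrange in every product exactly one factor carrying an $\sqrt{\mathcal{E}}$-gain while the remaining factors combine into $\mathcal{D}$. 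Throughout we use the standing assumptions that $\eta$ has zero mean and satisfies \eqref{bound-eta-alpha}, the embedding $H^{3/2+}(\mathcal{I})\subset C^1(\overline{\mathcal{I}})$, and the crucial bookkeeping fact that, since $\delta\leq 1$, one has $\|\partial_t^j\eta\|_{H^{3/2+}(\mathcal{I})}\leq\|\partial_t^j\eta\|_{H^{5/2}(\mathcal{I})}$, so that by Proposition \ref{prop-controletaj} and \eqref{tot-diss} the $H^{3/2+}$-norms of $\partial_t\eta$ and $\partial_t^2\eta$ are all controlled by $\sqrt{\mathcal{D}}$; by contrast $\|\eta\|_{H^{3/2+}(\mathcal{I})}$, $\|\partial_t^j\eta\|_{H^1(\mathcal{I})}$ and $\|\Phi\|_{\mathring H^2(\Omega_s)}$ (the latter via Proposition \ref{H2Phi}) are controlled by $\sqrt{\mathcal{E}}$.

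For the three terms involving $F_1^1,F_2^1,F_3^1$, the main input is already available from the proof of Proposition \ref{H2PhiD}: combining the product estimates of Lemma \ref{lemma-prodest}, the regularizing property \eqref{regular-prop}, Lemma \ref{lemma-R}, and the elliptic bound $\|\Phi\|_{\mathring H^2(\Omega_s)}^2\leq C\mathcal{E}_\parallel$ of Proposition \ref{H2Phi}, one obtains
\[
\|F_2^1\|_{L^2(\Omega_s)}^2+\|F_1^1\|_{H^{1/2}(\Gamma)}^2+\|F_3^1\|_{H^{1/2}(\Gamma_w)}^2\leq C\,\mathcal{E}\,\mathcal{D}.
\]
On the other hand, Proposition \ref{prop-controlL2} together with $\|\nabla\partial_t\Phi\|_{L^2(\Omega_s)}^2\leq\mathcal{D}_\parallel$ gives $\|\partial_t\Phi\|_{H^1(\Omega_s)}^2\leq C\mathcal{D}$, hence by a trace inequality $\|\partial_t\Phi\|_{L^2(\Gamma)}+\|\partial_t\Phi\|_{L^2(\Gamma_w)}+\|\partial_t\Phi\|_{L^2(\Omega_s)}\leq C\sqrt{\mathcal{D}}$. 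Applying Cauchy--Schwarz in $L^2(\Omega_s)$, $L^2(\Gamma_w)$ and $L^2(\Gamma)$ respectively (using $|\det(J_\eta)|\leq C$ and $|N_{h_s}|^{-1}\leq C$, and $\|F_1^1\|_{L^2(\Gamma)}\leq\|F_1^1\|_{H^{1/2}(\Gamma)}$, likewise on $\Gamma_w$), each of these three terms is bounded by $\sqrt{C\mathcal{E}\mathcal{D}}\cdot C\sqrt{\mathcal{D}}\leq C\sqrt{\mathcal{E}}\,\mathcal{D}$.

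It remains to treat $\int_{\mathcal{I}}\sigma\mathcal{F}_1(h_s',\eta')\,dx$ with $\mathcal{F}_1$ as in \eqref{Fj}. By Lemma \ref{lemma-R} and the embedding $H^{3/2+}(\mathcal{I})\subset C^1(\overline{\mathcal{I}})$ (which, via \eqref{bound-eta-alpha}, makes $\eta'$ bounded), $\|\partial_{z_2}^2\mathcal{R}(h_s',\eta')\|_{L^\infty(\mathcal{I})}\leq C$, so
\[
\Big|\int_{\mathcal{I}}\sigma\mathcal{F}_1(h_s',\eta')\,dx\Big|\leq C\int_{\mathcal{I}}|\partial_t\eta'|^3\,dx\leq C\,\|\partial_t\eta'\|_{L^2(\mathcal{I})}^2\,\|\partial_t\eta'\|_{L^\infty(\mathcal{I})}.
\]
Estimating one factor $\|\partial_t\eta'\|_{L^2}\leq\|\partial_t\eta\|_{H^1}\leq\sqrt{\mathcal{E}}$, the other $\|\partial_t\eta'\|_{L^2}\leq\|\partial_t\eta\|_{H^{5/2}}\leq\sqrt{\mathcal{D}}$, and $\|\partial_t\eta'\|_{L^\infty}\leq C\|\partial_t\eta\|_{H^{3/2+}}\leq C\|\partial_t\eta\|_{H^{5/2}}\leq C\sqrt{\mathcal{D}}$, this term too is $\leq C\sqrt{\mathcal{E}}\,\mathcal{D}$. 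Summing the four bounds yields $|\mathcal{S}_1|\leq C\sqrt{\mathcal{E}}\,\mathcal{D}$.

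The point requiring the most care — and the reason the improved energy and dissipation are defined as in \eqref{impro-ene}--\eqref{tot-diss} — is that the coefficients $\partial_t A_\eta$, $\partial_t\Sigma_\eta$ and $\partial_t(\Sigma_\eta^TN_h)$ entering $F_i^1$ each carry one time derivative and one (or, after the divergence in $F_2^1$, two) spatial derivative of $\eta^\dag$, which sit at the very top of the dissipation scale; there is no regularity to spare on those factors, so the needed $\sqrt{\mathcal{E}}$-smallness must be extracted entirely from $\|\Phi\|_{\mathring H^2(\Omega_s)}\leq C\sqrt{\mathcal{E}_\parallel}$, i.e.\ from the elliptic gain of Section \ref{sec-ellest}. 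Consequently no new estimate beyond Sections \ref{sec-adddiss}--\ref{sec-ellest} is required: the proof is essentially a careful accounting of which norm — $\mathcal{E}$ or $\mathcal{D}$ — controls each factor.
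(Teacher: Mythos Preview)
Your proof is correct and follows essentially the same approach as the paper. The only organizational difference is that you reuse the bounds $\|F_2^1\|_{L^2}^2+\|F_1^1\|_{H^{1/2}}^2+\|F_3^1\|_{H^{1/2}}^2\leq C\mathcal{E}\mathcal{D}$ already established in the proof of Proposition~\ref{H2PhiD} and then pair them with $\|\partial_t\Phi\|_{H^1(\Omega_s)}\leq C\sqrt{\mathcal{D}}$ via Cauchy--Schwarz, whereas the paper re-derives each of the three integrals directly using H\"older in $L^3$ or $L^4$ on the boundary and in the bulk; the ingredients (product estimates, regularizing diffeomorphism, trace, and the elliptic gain $\|\Phi\|_{\mathring H^2}\leq C\sqrt{\mathcal{E}_\parallel}$) are identical. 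A minor bookkeeping remark: in the paper's direct estimates the $\sqrt{\mathcal{E}}$-factor is sometimes attributed to $\|\partial_t\eta\|_{H^{3/2}}$ (which lies in the improved energy) rather than to $\|\Phi\|_{\mathring H^2}$, but since both quantities are simultaneously controlled by $\sqrt{\mathcal{E}}$ and by $\sqrt{\mathcal{D}}$, the two allocations are interchangeable and lead to the same bound.
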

		
		\begin{proof} Recall from \eqref{Sj} that
				\begin{equation}\label{S1}
				\begin{aligned}
				\	\mathcal{S}_1= - \int_{\Omega_s}F_2^1 \partial_t \Phi  -\int_{\Gamma} F_1^1 \partial_t \Phi+ \int_{\Gamma_w} \det(J_\eta)F_3^1\partial_t\Phi
					+ \int_{\mathcal{I}} \sigma\mathcal{F}_1(h'_s,\eta')dx
				\end{aligned}
			\end{equation} with $F^1_j$ as in \eqref{F1} for $j=1,2,3$ and $\mathcal{F}_1$ as in \eqref{Fj}.
			 We proceed by estimating each term.  Using Hölder inequality, the explicit expression \eqref{Aeta} of $A_\eta$  and the continuous embedding $H^1(\Omega_s)\subset L^4(\Omega_s)$ yield that
			\begin{equation*}
				\begin{aligned}
					\int_{\Omega_s}& F_2^1 \partial_t \Phi  = \int_{\Omega_s} \nabla \cdot (\partial_t A_\eta\nabla \Phi) \partial_t \Phi  \\[5pt]&\leq \left(\| \nabla \partial_t  A_\eta \|_{L^2(\Omega_s)}\|\nabla \Phi\|_{L^4(\Omega_s)} + \|\partial_t A_\eta\|_{L^4(\Omega_s)}\|\nabla^2 \Phi\|_{L^2(\Omega_s)} \right) \|\partial_t \Phi\|_{L^4(\Omega_s)}\\[5pt]&\leq C \|   \partial_t\eta^\dag\|_{H^2(\Omega_s)}\|\nabla \Phi\|_{H^1(\Omega_s)}\|\partial_t \Phi\|_{H^1(\Omega_s)}	.\end{aligned}
				\end{equation*}
				Using the regularizing property \eqref{regular-prop} for $s=2$ and the control provided by Proposition \ref{H2Phi}, we find that
						\begin{equation*}
						\begin{aligned}
				\int_{\Omega_s} F_2^1 \partial_t \Phi   \leq C  \|\partial_t \eta \|_{{H}^{3/2}(\mathcal{I})}\| \Phi\|_{\mathring{H}^2(\Omega_s)}\|\partial_t \Phi\|_{H^1(\Omega_s)} \leq C \sqrt{\mathcal{E}}\mathcal{D}.
				\end{aligned}
			\end{equation*}
			Let us write the second term in \eqref{S1} as the sum of two terms, namely,
			\begin{equation}\label{est-intF11}
				\begin{aligned}
					\int_{\Gamma} F_1^1 \partial_t \Phi  &= \int_{\Gamma} \partial_t (\Sigma^T_\eta N_h)\cdot \nabla \Phi \partial_t \Phi  \\&=\int_{\Gamma} \partial_t \Sigma_\eta^T N_h \cdot \nabla \Phi \partial_t \Phi
					+\int_{\Gamma} \Sigma_\eta^T \partial_t N_h \cdot \nabla \Phi \partial_t \Phi = I + II .
				\end{aligned}
			\end{equation}
  Recall that $N_h=(-(h_s+\eta)', 1)^T$, so that $\partial_t N_h= (-\partial_t\eta', 0)^T$. We combine the continuous embedding $H^{1/2}(\Gamma)\subset L^3 (\Gamma)$, the product estimate \eqref{prod-estH12} on $\Gamma$, the trace inequality in $H^1(\Omega_s)$ and the regularizing property \eqref{regular-prop} to obtain
			\begin{equation*}
				\begin{aligned}
					|I|& \leq \|\partial_t \Sigma_\eta^T N_h\|_{L^3(\Gamma)}
                \| \nabla \Phi\|_{L^3(\Gamma)}\|\partial_t \Phi\|_{L^3(\Gamma)}\\[5pt]&\leq C\|\partial_t \Sigma_\eta^T N_h\|_{H^{1/2}(\Gamma)}
                \| \nabla \Phi\|_{H^{1/2}(\Gamma)}\|\partial_t \Phi\|_{H^{1/2}(\Gamma)}
                \\[5pt]&\leq C \|N_h\|_{H^{1/2+}(\Gamma)}\|\partial_t \Sigma_\eta\|_{H^{1/2}(\Gamma)}\|\nabla \Phi\|_{H^1(\Omega_s)}\|\partial_t \Phi\|_{H^{1}(\Omega_s)}
                \\[5pt]&\leq C (1 + \|\eta\|_{H^{3/2+}(\mathcal{I})}) \| \partial_t \eta^\dag \|_{H^2(\Omega_s)}\| \Phi\|_{\mathring{H}^2(\Omega_s)}\|\partial_t \Phi\|_{H^{1}(\Omega_s)}\\[5pt]&\leq C \|\partial_t \eta\|_{{H}^{3/2}(\mathcal{I})}\|  \Phi\|_{\mathring{H}^2(\Omega_s)}\|\partial_t \Phi\|_{{H}^1(\Omega_s)}\leq C \sqrt{\mathcal{E}}\mathcal{D}.
				\end{aligned}
			\end{equation*}Note that we have used the smoothness of $h_s$ and \eqref{bound-eta-alpha}. Analogously, we have that
			\begin{equation*}
				\begin{aligned}
					|II|&\leq C\|\Sigma^T_\eta\partial_t N_h\|_{H^{1/2}(\Gamma)}\| \nabla \Phi\|_{H^{1/2}(\Gamma)}\|\partial_t \Phi\|_{H^{1/2}(\Gamma)}\\[5pt]
					&\leq C\|\Sigma_\eta\|_{H^{1/2+}(\Gamma)} \|\partial_t \eta'\|_{H^{1/2}(\mathcal{I})}\| \nabla \Phi\|_{H^{1}(\Omega_s)}\|\partial_t \Phi\|_{H^{1}(\Omega_s)}\\[5pt]
					&\leq  C\|\eta^\dag\|_{H^{2+}(\Omega_s)}\|\partial_t \eta\|_{H^{3/2}(\mathcal{I})} \|  \Phi\|_{\mathring{H}^2(\Omega_s)}\|\partial_t \Phi\|_{H^1(\Omega_s)}\\[5pt]
					&\leq  C\|\eta\|_{H^{3/2+}(\mathcal{I})}\|\partial_t \eta\|_{H^{3/2}(\mathcal{I})} \|  \Phi\|_{\mathring{H}^2(\Omega_s)}\|\partial_t \Phi\|_{H^1(\Omega_s)}\leq C \sqrt{\mathcal{E}}\mathcal{D} .
				\end{aligned}
			\end{equation*} For the third term in \eqref{S1}, we combine the product estimate \eqref{prod-estH12} on $\Gamma_w$ with the previous arguments to derive
			\begin{equation}\label{intGammaw1}
				\begin{aligned}
					\int_{\Gamma_w}& \mathrm{det}(J_\eta) F^1_3 \partial_t \Phi =\int_{\Gamma_w}\mathrm{det}(J_\eta) \partial_t \Sigma_\eta\nabla\Phi\cdot n \partial_t \Phi
\\[5pt]&\leq
C\|\mathrm{det} (J_\eta)\partial_t \Sigma_\eta\|_{H^{1/2}(\Gamma_w)}\|\nabla \Phi\|_{H^{1/2}(\Gamma_w)}\|\partial_t \Phi\|_{H^{1/2}(\Gamma_w)}
\\[5pt]&\leq C \|\mathrm{det} (J_\eta)\|_{H^{1+}(\Omega_s)} \|\partial_t \Sigma_\eta\|_{H^{1}(\Omega_s)}\| \nabla \Phi\|_{H^1(\Omega_s)}\|\partial_t \Phi\|_{H^1(\Omega_s)}
\\[5pt]&\leq C (1+\|\eta^\dag\|_{H^{2+}(\Omega_s)} )\| \partial_t \eta^\dag\|_{H^2(\Omega_s)}   \| \Phi\|_{\mathring{H}^2(\Omega_s)}\|\partial_t \Phi\|_{H^1(\Omega_s)}
			\\[5pt]&	\leq  C(1+\|\eta\|_{H^{3/2+}(\mathcal{I})} )\|\partial_t \eta\|_{H^{3/2}(\mathcal{I})}   \| \Phi\|_{\mathring{H}^2(\Omega_s)}\|\partial_t \Phi\|_{H^1(\Omega_s)}	\leq C \sqrt{\mathcal{E}}\mathcal{D}.
				\end{aligned}
			\end{equation}Thanks to the uniform bounds in Lemma \ref{lemma-R} and the continuous embedding $H^{1/2}(\mathcal{I})\subset L^4(\mathcal{I})$, the last term in \eqref{S1} has the control
			\begin{equation*}
				\begin{aligned}
					\int_{\mathcal{I}} \sigma \mathcal{F}_1(h'_s,\eta')dx &=  \int_{\mathcal{I}} \sigma \frac{(\partial_t \eta')^3}{2}\partial^2_{z_2}\mathcal{R}(h'_s,\eta')dx\\[5pt]&\leq C\|\partial_t \eta'\|_{L^2(\mathcal{I})} \|\partial_t \eta'\|^2_{L^4(\mathcal{I})}\left\|\partial^2_{z_2}\mathcal{R}\right\|_{L^\infty(\mathbb{R}^2)}
					\\[5pt]&\leq C \|\partial_t \eta\|_{H^1(\mathcal{I})} \|\partial_t \eta \|^2_{{H}^{3/2}(\mathcal{I})} \leq C \sqrt{\mathcal{E}} \mathcal{D}.
				\end{aligned}
			\end{equation*}	

            \end{proof}

        \begin{proposition}\label{S2-bound} Let $\mathcal{S}_2$ be given by \eqref{Sj}. Then, there exists a constant $C>0$ such that
		\begin{equation*}
			|\mathcal{S}_2| \leq C (\sqrt{\mathcal{E}} + \mathcal{E})\mathcal{D}.
		\end{equation*}
	\end{proposition}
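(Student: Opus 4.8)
The plan is to bound each of the four contributions to $\mathcal{S}_2$ separately, exactly as in the proof of Proposition \ref{S1-bound}, but now keeping careful track of the fact that $\partial_t^2\Phi$ is controlled only in $H^1(\Omega_s)$ (via $\mathcal{D}_\parallel$) and \emph{not} in $\mathring{H}^2(\Omega_s)$, which is why a term of size $\mathcal{E}\,\mathcal{D}$ (rather than only $\sqrt{\mathcal{E}}\,\mathcal{D}$) will appear. Recall from \eqref{Sj} and \eqref{F2} that
\begin{equation*}
\mathcal{S}_2= -\int_{\Omega_s}F_2^2\,\partial_t^2\Phi -\int_{\Gamma}\frac{1}{|N_{h_s}|}F_1^2\,\partial_t^2\Phi +\int_{\Gamma_w}\det(J_\eta)F_3^2\,\partial_t^2\Phi +\int_{\mathcal{I}}\sigma\mathcal{F}_2(h_s',\eta')\,dx,
\end{equation*}
with $F_1^2=2\partial_t(\Sigma_\eta^T N_h)\cdot\nabla\partial_t\Phi+\partial_t^2(\Sigma_\eta^T N_h)\cdot\nabla\Phi$, $F_2^2=-\nabla\cdot(2\partial_t A_\eta\nabla\partial_t\Phi+\partial_t^2 A_\eta\nabla\Phi)$, and $F_3^2=-(\partial_t\Sigma_\eta\nabla\partial_t\Phi+\partial_t^2\Sigma_\eta\nabla\Phi)\cdot n$.

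First I would treat the interior term $\int_{\Omega_s}F_2^2\,\partial_t^2\Phi$. Integrating by parts moves one derivative onto $\partial_t^2\Phi$, producing two families of terms: those with $\partial_t A_\eta\,\nabla\partial_t\Phi$ paired against $\nabla\partial_t^2\Phi$, and those with $\partial_t^2 A_\eta\,\nabla\Phi$ paired against $\nabla\partial_t^2\Phi$. For the first family I use H\"older together with $H^1(\Omega_s)\subset L^4(\Omega_s)$, the regularizing property \eqref{regular-prop}, and Proposition \ref{H2PhiD} to bound $\|\partial_t A_\eta\|_{L^\infty}\lesssim\|\partial_t\eta\|_{H^{3/2+}}\le\sqrt{\mathcal{E}}$, $\|\nabla\partial_t\Phi\|_{H^1}\lesssim\sqrt{\mathcal{D}_\parallel+\mathcal{E}\mathcal{D}}$, and $\|\nabla\partial_t^2\Phi\|_{L^2}\le\sqrt{\mathcal{D}_\parallel}$, giving a bound of the form $\sqrt{\mathcal{E}}(\mathcal{D}_\parallel+\mathcal{E}\mathcal{D})\lesssim(\sqrt{\mathcal{E}}+\mathcal{E})\mathcal{D}$ once $\mathcal{E}$ is small. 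For the second family, $\|\partial_t^2 A_\eta\|_{L^4}\lesssim\|\partial_t^2\eta\|_{H^{1/2+}}\lesssim\|\partial_t^2\eta\|_{H^{5/2}}$, which by Proposition \ref{prop-controletaj} is $\lesssim\sqrt{\mathcal{D}_\parallel+\mathcal{E}\mathcal{D}}$, while $\|\nabla\Phi\|_{L^4}\lesssim\|\Phi\|_{\mathring{H}^2}\lesssim\sqrt{\mathcal{E}_\parallel}\le\sqrt{\mathcal{E}}$ by Proposition \ref{H2Phi} and $\|\nabla\partial_t^2\Phi\|_{L^2}\le\sqrt{\mathcal{D}_\parallel}$; hence this family is again $\lesssim\sqrt{\mathcal{E}}(\mathcal{D}_\parallel+\mathcal{E}\mathcal{D})+\ldots$, absorbed into $(\sqrt{\mathcal{E}}+\mathcal{E})\mathcal{D}$.

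Next I would handle the boundary terms $\int_\Gamma F_1^2\,\partial_t^2\Phi$ and $\int_{\Gamma_w}\det(J_\eta)F_3^2\,\partial_t^2\Phi$ using the same toolkit as in Proposition \ref{S1-bound}: the trace inequality in $H^1(\Omega_s)$, the embeddings $H^{1/2}\subset L^3$ on the one-dimensional boundaries, the product estimates \eqref{prod-estH12}, the regularizing property \eqref{regular-prop}, and Propositions \ref{H2Phi}, \ref{H2PhiD}, \ref{prop-controletaj}. The terms in $F_1^2$, $F_3^2$ containing $\nabla\partial_t\Phi$ carry a factor $\partial_t\Sigma_\eta$ or $\partial_t N_h$ (one time derivative, hence $\lesssim\sqrt{\mathcal{E}}$) and are paired via trace with $\|\partial_t\Phi\|_{\mathring{H}^2}$ and $\|\partial_t^2\Phi\|_{H^1}$, yielding $\sqrt{\mathcal{E}}\cdot\sqrt{\mathcal{D}}\cdot\sqrt{\mathcal{D}}$-type bounds (up to the $\mathcal{E}\mathcal{D}$ correction from Proposition \ref{H2PhiD}). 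The terms containing $\nabla\Phi$ carry a factor $\partial_t^2\Sigma_\eta$ or $\partial_t^2 N_h=(-\partial_t^2\eta',0)^T$, estimated in $H^{1/2}$-type norms by $\|\partial_t^2\eta\|_{H^{3/2}}$ or $\|\partial_t^2\eta^\dag\|_{H^2}\lesssim\|\partial_t^2\eta\|_{H^{3/2}}$; since $\|\partial_t^2\eta\|_{H^{3/2}}$ is an interpolant between the basic-energy norm $H^1$ and the elliptically gained $H^{5/2}$ norm, it is bounded by $\sqrt{\mathcal{D}}$ after using Proposition \ref{prop-controletaj}, and paired with $\|\Phi\|_{\mathring{H}^2}\lesssim\sqrt{\mathcal{E}}$ and $\|\partial_t^2\Phi\|_{H^1}\lesssim\sqrt{\mathcal{D}}$ this gives the $\mathcal{E}\mathcal{D}$-type contribution. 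Finally, the purely local term $\int_{\mathcal{I}}\sigma\mathcal{F}_2(h_s',\eta')\,dx$ with $\mathcal{F}_2$ given in \eqref{Fj} is estimated exactly as the corresponding term in Proposition \ref{S1-bound}: using Lemma \ref{lemma-R} for the boundedness of $\partial_{z_2}^2\mathcal{R}$, $\partial_{z_2}^3\mathcal{R}$, H\"older, and the embedding $H^{1/2}(\mathcal{I})\subset L^4(\mathcal{I})$ one finds $\lesssim\|\partial_t^2\eta\|_{L^2}\|\partial_t^2\eta\|_{H^{1/2}}\|\partial_t\eta\|_{H^{1/2}}+\|\partial_t^2\eta\|_{H^{1/2}}\|\partial_t\eta\|_{H^{3/2}}^{?}\lesssim\sqrt{\mathcal{E}}\,\mathcal{D}$, the cubic/quartic structure supplying the extra smallness.

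The main obstacle, and the point requiring the most care, is the term in $F_1^2$ (and $F_3^2$, $F_2^2$) where \emph{two} time derivatives land on the highest-regularity factor while being paired against $\partial_t^2\Phi$, which is available only at the $H^1(\Omega_s)$ level: there is no room to trade regularity freely, so one must exploit that the factor $\nabla\Phi$ (rather than $\nabla\partial_t^2\Phi$) appears there, allowing the use of the $\mathring{H}^2(\Omega_s)$-control of $\Phi$ from Proposition \ref{H2Phi}, while the quantity $\|\partial_t^2\eta\|_{H^{3/2}}$ must be controlled by $\sqrt{\mathcal{D}}$ through the elliptic gain of Proposition \ref{prop-controletaj}. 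This is precisely the mechanism that forces the weaker bound $|\mathcal{S}_2|\le C(\sqrt{\mathcal{E}}+\mathcal{E})\mathcal{D}$ instead of $C\sqrt{\mathcal{E}}\,\mathcal{D}$, and it is also the reason why a second time derivative — but no more — is taken in Section \ref{sec-ED}: at the level of $\partial_t^2$ the nonlinearity still closes with the improved energy-dissipation scheme \eqref{prop-addED}, whereas at the level of $\partial_t$ alone it would not.
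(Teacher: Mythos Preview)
Your overall strategy matches the paper's proof: estimate the four contributions to $\mathcal{S}_2$ separately using the same toolkit (product estimates, traces, embeddings, the regularizing diffeomorphism, and Propositions \ref{H2Phi}, \ref{H2PhiD}, \ref{prop-controletaj}). However, there are two places where your argument departs from the paper and the reasoning is not quite right.

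First, for the interior term the paper does \emph{not} integrate by parts: it expands the divergence and uses H\"older with exponents $L^{4/3}\times L^4$, which avoids producing any new boundary contributions. Your integration by parts creates boundary integrals on $\Gamma$ and $\Gamma_w$ that you do not mention; they can be handled (they have the same structure as the $F_1^2$, $F_3^2$ terms) but must be accounted for. In addition, your claim $\|\partial_t A_\eta\|_{L^\infty}\lesssim\|\partial_t\eta\|_{H^{3/2+}}\le\sqrt{\mathcal{E}}$ is incorrect: the improved energy $\mathcal{E}$ in \eqref{impro-ene} contains only $\|\partial_t\eta\|_{H^{3/2}}^2$, not $\|\partial_t\eta\|_{H^{3/2+}}^2$. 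The fix is to use an $L^4\times L^4\times L^2$ H\"older instead, placing $\partial_t A_\eta$ in $H^1(\Omega_s)\subset L^4(\Omega_s)$, which requires only $\|\partial_t\eta\|_{H^{3/2}}$.

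Second, and more importantly, you misidentify the source of the $\mathcal{E}\,\mathcal{D}$ contribution. In the paper's proof, \emph{every} term involving $F_1^2$, $F_2^2$, $F_3^2$ is bounded by $C\sqrt{\mathcal{E}}\,\mathcal{D}$; indeed your own arithmetic in the last paragraph gives $\sqrt{\mathcal{D}}\cdot\sqrt{\mathcal{E}}\cdot\sqrt{\mathcal{D}}=\sqrt{\mathcal{E}}\,\mathcal{D}$, not $\mathcal{E}\,\mathcal{D}$. The genuine source of $\mathcal{E}\,\mathcal{D}$ is the quartic piece of $\mathcal{F}_2$, namely $\int_{\mathcal{I}}\partial_t^2\eta'(\partial_t\eta')^3\partial_{z_2}^3\mathcal{R}\,dx$: bounding it by $\|\partial_t^2\eta\|_{H^1}\|\partial_t\eta\|_{H^{3/2}}^3$ and splitting $\|\partial_t\eta\|_{H^{3/2}}^3\le\sqrt{\mathcal{E}}\cdot\|\partial_t\eta\|_{H^{3/2}}^2\le\sqrt{\mathcal{E}}\,\mathcal{D}$ together with $\|\partial_t^2\eta\|_{H^1}\le\sqrt{\mathcal{E}}$ gives exactly $\mathcal{E}\,\mathcal{D}$. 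Your sketch of the $\mathcal{F}_2$ estimate (with its ``$?$'') claims only $\sqrt{\mathcal{E}}\,\mathcal{D}$, which is too optimistic for this quartic term. So the narrative in your final paragraph about $\partial_t^2\Phi$ lacking $\mathring{H}^2$ control being ``precisely the mechanism'' behind the weaker bound is not the correct explanation.
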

		\begin{proof}Recall  from \eqref{Sj} that
			\begin{equation}\label{S2}
				\begin{aligned}
					\ \mathcal{S}_2= &- \int_{\Omega_s}F_2^2 \partial_t^2 \Phi-\int_{\Gamma} F_1^2 \partial_t^2 \Phi + \int_{\Gamma_w} \det(J_\eta)F_3^2\partial_t^2\Phi
					+ \int_{\mathcal{I}} \sigma\mathcal{F}_2(h'_s,\eta')dx.
				\end{aligned}
			\end{equation}with $F^2_j$ as in \eqref{F2} for $j=1,2,3$ and $\mathcal{F}_2$ as in  \eqref{Fj}.  To estimate the first term in \eqref{S2}, we combine the continuous embedding $H^{1}(\Omega_s)\subset L^4(\Omega_s)$, the regularizing property \eqref{regular-prop} and the trace inequality to get
            \begin{equation*}
				\begin{aligned}	\int_{\Omega_s}&F_2^2\partial_t^2 \Phi = -\int_{\Omega_s}\left(2\nabla \cdot (\partial_t A_\eta \nabla \partial_t \Phi) + \nabla \cdot (\partial_t^2 A_\eta \nabla \Phi) \right)\partial_t^2 \Phi \\[5pt]
&\leq C\left(\|  \partial_t A_\eta\|_{H^1(\Omega_s)}\|\nabla \partial_t \Phi\|_{H^1(\Omega_s)}
					 + \|\nabla \partial_t^2 A_\eta\|_{H^1(\Omega_s)}\|\nabla \Phi\|_{H^1(\Omega_s)}\right)\|\partial_t^2\Phi\|_{L^4(\Omega_s)}\\[5pt]
					&\leq C \big(\| \partial_t \eta^\dag\|_{H^2(\Omega_s)}\|\partial_t \Phi\|_{\mathring{H}^2(\Omega_s)} + \| \partial_t^2 \eta^\dag\|_{H^2(\Omega_s)}\| \Phi\|_{\mathring{H}^2(\Omega_s)} \big)\|\partial_t^2 \Phi\|_{H^1(\Omega_s)}
					\\[5pt]&\leq C \big(\|\partial_t \eta \|_{H^{3/2}(\mathcal{I})} \|\partial_t \Phi\|_{\mathring{H}^2(\Omega_s)} + \| \partial_t^2 \eta\|_{H^{3/2}(\mathcal{I})} \| \Phi\|_{\mathring{H}^2(\Omega_s)}    \big)\|\partial_t^2 \Phi\|_{H^1(\Omega_s)} \\[5pt]& \leq C\big( \sqrt{\mathcal{E} \mathcal{D}} + \sqrt{\mathcal{D}\mathcal{E}}\big) \|\partial_t^2\Phi\|_{H^1(\Omega_s)}\leq C \sqrt{\mathcal{E}}\mathcal{D}.
				\end{aligned}
			\end{equation*}

	Let us write the second term in \eqref{S2} as the sum of two:
			\begin{equation*}
				\begin{aligned}
					\int_{\Gamma} F_1^2 \partial_t^2 \Phi  &= -\int_{\Gamma} \left(2\partial_t (\Sigma^T_\eta N_h)\cdot \nabla \partial_t\Phi + \partial_t^2 (\Sigma_\eta^T N_h)\cdot \nabla \Phi\right)\partial_t^2 \Phi  = I +II.
				\end{aligned}
			\end{equation*}
  Since $I$ has the same structure as \eqref{est-intF11}, we infer that
   \begin{equation*}
       |I| \leq C \|\partial_t \eta\|_{H^{3/2}(\mathcal{I})}\|\partial_t\Phi\|_{\mathring{H}^2(\Omega_s)}\|\partial_t^2 \Phi\|_{H^1(\Omega_s)}\leq C \sqrt{\mathcal{E}}\mathcal{D}.
   \end{equation*}
Note that $\partial_t \Sigma_\eta \partial_t N_h=(0,0)^T$ and $\partial_t^2 N_h= (-\partial_t^2 \eta', 0 )^T$. Hence we use the continuous embedding $H^{1/2}(\Gamma)\subset L^3(\Gamma)$ and argue as before to get the estimate
   \begin{equation*}
       \begin{aligned}
       |II|&= \Big|\int_\Gamma \left( \partial_t^2\Sigma_\eta^T\cdot \nabla \Phi  - \partial_t^2 \eta'\partial_x \Phi \right) \partial_t^2 \Phi  \Big|\\[5pt]& \leq  C\left(\|\partial_t^2 \Sigma_\eta\|_{L^3(\Gamma)}\|\nabla \Phi\|_{L^3(\Gamma)} + \|\partial_t^2 \eta'\|_{L^3(\mathcal{I})}\|\partial_x \Phi\|_{L^3(\Gamma)}\right)\| \partial_t^2 \Phi \|_{L^3(\Gamma)}
       \\[5pt]&
       \leq C \big( \|\partial_t^2 \eta^\dag\|_{H^2(\Omega_s)} + \|\partial_t^2 \eta \|_{H^{3/2}(\mathcal{I})} \big)
       \| \nabla \Phi\|_{H^1(\Omega_s)} \|\partial_t^2 \Phi\|_{H^1(\Omega_s)}\\[5pt]&
       \leq C \|\partial_t^2 \eta\|_{H^{3/2}(\mathcal{I})} \|\Phi\|_{\mathring{H}^2(\Omega_s)}\|\partial_t^2 \Phi\|_{H^1(\Omega_s)}\leq C \sqrt{\mathcal{E}} \mathcal{D}.
       \end{aligned}
   \end{equation*}
The third term in \eqref{S2}, given by the boundary integral on $\Gamma_w$, reads
			\begin{equation*}
					\int_{\Gamma_w} \mathrm{det}(J_\eta) F^2_3 \partial_t^2 \Phi =-\int_{\Gamma_w}\mathrm{det}(J_\eta) \left(\partial_t \Sigma_\eta\nabla\partial_t\Phi + \partial_t^2 \Sigma_\eta \nabla\Phi\right)\cdot n \partial_t^2 \Phi = I + II.
\end{equation*} Note that the first term of the sum has the same structure as \eqref{intGammaw1}, hence we have
\begin{equation*}
    |I| \leq C \|\partial_t \eta\|_{H^{3/2}(\mathcal{I})}\|\partial_t \Phi\|_{\mathring{H}^2(\Omega_s)} \|\partial_t^2 \Phi\|_{H^1(\Omega_s)}\leq C \sqrt{\mathcal{E}}\mathcal{D}.
\end{equation*} The estimate of the second term follows using the product estimate \eqref{prod-estH12} on $\Gamma_w$ and arguing as before:
       \begin{equation*}\begin{aligned}
       |II| &\leq C\|\mathrm{det} (J_\eta)\partial_t^2 \Sigma_\eta\|_{H^{1/2}(\Gamma_w)}\|\nabla \Phi\|_{H^{1/2}(\Gamma_w)}\|\partial_t^2 \Phi\|_{H^{1/2}(\Gamma_w)}
\\[5pt]&\leq C \|\mathrm{det} (J_\eta)\|_{H^{1+}(\Omega_s)} \|\partial_t^2 \Sigma_\eta\|_{H^{1}(\Omega_s)}\| \nabla \Phi\|_{H^1(\Omega_s)}\|\partial_t ^2\Phi\|_{H^1(\Omega_s)}
\\[5pt]&\leq C (1+\|\eta^\dag\|_{H^{2+}(\Omega_s)} )\| \partial_t^2 \eta^\dag\|_{H^2(\Omega_s)}   \| \Phi\|_{\mathring{H}^2(\Omega_s)}\|\partial_t^2 \Phi\|_{H^1(\Omega_s)}
			\\[5pt]&	\leq  C(1+\|\eta\|_{H^{3/2+}(\mathcal{I})} )\|\partial_t ^2\eta\|_{H^{3/2}(\mathcal{I})}   \| \Phi\|_{\mathring{H}^2(\Omega_s)}\|\partial_t^2 \Phi\|_{H^1(\Omega_s)}	\leq C \sqrt{\mathcal{E}}\mathcal{D}.
			\end{aligned}
			\end{equation*}
Gathering the last two inequalities together gives the desired bound for the third term in \eqref{S2}.
            Thanks to the uniform bounds in Lemma \ref{lemma-R} and the continuous embedding $H^{1/2}(\mathcal{I})\subset L^q(\mathcal{I})$ with $1\leq q<\infty$, we estimate the last term in \eqref{S2} by
			\begin{equation*}
				\begin{aligned}
					&\int_{\mathcal{I}} \sigma \mathcal{F}_2(h'_s,\eta')dx =  \int_{\mathcal{I}} \sigma \big((\partial_t^2 \eta')^2 \partial_t\eta'\partial^2_{z_2}\mathcal{R}(h'_s,\eta') + \partial_t^2 \eta' (\partial_t \eta')^3 \partial^3_{z_2}\mathcal{R}(h'_s,\eta') \big) dx\\[5pt]&\leq \sigma\big(\|\partial_t \eta'\|_{L^2(\mathcal{I})} \|\partial_t^2 \eta'\|^2_{L^4(\mathcal{I})}\left\|\partial^2_{z_2}\mathcal{R}\right\|_{L^\infty(\mathbb{R}^2)} + \|\partial_t^2 \eta'\|_{L^2(\mathcal{I})} \|\partial_t \eta'\|^3_{L^6(\mathcal{I})}\left\|\partial^3_{z_2}\mathcal{R}\right\|_{L^\infty(\mathbb{R}^2)}\big)
					\\[5pt]&\leq C \big(\|\partial_t \eta\|_{H^1(\mathcal{I})} \|\partial_t^2 \eta \|^2_{{H}^{3/2}(\mathcal{I})} + \|\partial_t^2 \eta\|_{H^1(\mathcal{I})}\|\partial_t \eta \|^3_{{H}^{3/2}(\mathcal{I})}\big) \leq C \big( \sqrt{\mathcal{E}} + \mathcal{E}\big) \mathcal{D}.
				\end{aligned}
    \end{equation*}
		\end{proof}

\section{Main results}	\label{sec-mainres}	
In this section we close the scheme of energy estimates and show the main results of the paper. First of all, let us define the following energies and dissipation
\begin{equation}\label{def-realenedis}
\begin{aligned}
\mathfrak{E}(t)=	&\sum_{j=0}^2\int_{\mathcal{I}} \Big(g\frac{(\partial_t^j \eta)^2}{2} + \frac{\sigma}{2}\frac{ (\partial_t^j\eta')^2}{(1+ (h'_s)^2)^{3/2}} \Big)dx, \quad
\mathfrak{F}(t)= \sum_{j=0}^2 \int_{\mathcal{I}}   \sigma \mathcal{Q}_j(h'_s,\eta') dx\\[5pt]
\mathfrak{D}(t)=&\sum_{j=0}^2 \int_{\Omega_s}\det(J_\eta)|\Sigma_\eta\nabla \partial_t^j \Phi|^2 + (\partial_t^{j+1} \eta)^2(t,-1) + (\partial_t^{j+1} \eta)^2(t,1),
\end{aligned}
\end{equation}
with
\begin{align*}
    \mathcal{Q}_0(h_s', \eta') = & \int_0^{\eta'}\mathcal{R}(h_s', z)dz, \qquad
    \mathcal{Q}_1(h'_s,\eta')=  \frac{(\partial_t\eta')^2}{2}\partial_{z_2}\mathcal{R}(h'_s,\eta'), \\[5pt]
				  \mathcal{Q}_2(h'_s,\eta')=& \frac{(\partial_t^2\eta')^2}{2}\partial_{z_2}\mathcal{R}(h'_s,\eta') + \partial_t^2\eta' (\partial_t\eta')^2\partial^2_{z_2}\mathcal{R}(h'_s,\eta'),
\end{align*}
with $\mathcal{R}$ given by \eqref{R}. Note that, thanks to Propositions \ref{prop-higherenergy}, \ref{S1-bound} and \ref{S2-bound},  we derive the estimate
\begin{align}\label{open}
\frac{d}{d t}(\mathfrak{E}(t)+\mathfrak{F}(t))+\mathfrak{D}(t)\leq C \big(\sqrt{\mathcal{E}(t)}+ \mathcal{E}(t)\big)\mathcal{D}(t).
\end{align}
From \eqref{open} we see that,  in order to close the energy estimate,  we need to compare the improved energy $\mathcal{E}(t)$ with $\mathfrak{E}(t)+\mathfrak{F}(t)$ and the improved dissipation $\mathfrak{D}(t)$ with $\mathcal{D}(t)$. This is done in the following lemma, whose proof follows steps similar to those in Section 8 of \cite{GuoTice2018}.
\begin{lemma}\label{lem-comparison}Let $\mathfrak{E}(t)$, $\mathfrak{F}(t)$ and $\mathfrak{D}(t)$ be given by \eqref{def-realenedis}.
There exists a constant $\alpha^*>0$ such that, if
\begin{align*}
\sup_{t\in[0,T]} \mathcal{E}(t)\leq \alpha^*,
\end{align*}
then 
\begin{equation}\label{comparison}
    \mathfrak{E}(t)\lesssim \mathcal{E}_{\parallel}(t)\lesssim \mathfrak{E}(t), \quad \mathfrak{D}(t)\lesssim \mathcal{D}_{\parallel}(t)\lesssim \mathfrak{D}(t)\quad\text{and}\quad
|\mathfrak{F}(t)|\leq \frac{1}{2}\mathfrak{E}(t).
\end{equation}
\end{lemma}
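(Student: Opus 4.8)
The three claimed equivalences in \eqref{comparison} are of two distinct natures, and I would treat them separately. The middle one, $\mathfrak{D}(t)\lesssim \mathcal{D}_\parallel(t)\lesssim \mathfrak{D}(t)$, is essentially a statement that the weighted quadratic form $\int_{\Omega_s}\det(J_\eta)|\Sigma_\eta \nabla \partial_t^j\Phi|^2$ is comparable to $\|\nabla \partial_t^j\Phi\|_{L^2(\Omega_s)}^2$, plus the observation that the boundary terms $(\partial_t^{j+1}\eta)^2(t,\pm 1)$ appear identically in both $\mathfrak{D}$ and $\mathcal{D}_\parallel$. The comparability of the quadratic form follows from the pointwise bounds on $\det(J_\eta)$ established in \eqref{detJeta}--\eqref{est-detJeta} (which give $c\le \det(J_\eta)\le 1/c$ once $\|\eta\|_{H^{3/2+}}$ is small) together with the fact that $\Sigma_\eta = \mathbb{I} + O(\|\eta\|_{H^{3/2+}})$ in $L^\infty(\Omega_s)$, again from Lemma \ref{lemma-diffeo}. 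So for $\|\eta\|_{H^{3/2+}(\mathcal{I})}$ small enough (which is guaranteed by taking $\alpha^*$ small, since $\|\eta\|_{H^{3/2+}}^2\le \mathcal{E}(t)\le \alpha^*$), the matrix $\det(J_\eta)\Sigma_\eta^T\Sigma_\eta = A_\eta$ is uniformly elliptic with ellipticity constants depending only on $c$, giving the two-sided bound with the boundary terms carried along verbatim.

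\textbf{The energy comparison.} For $\mathfrak{E}(t)\lesssim \mathcal{E}_\parallel(t)\lesssim \mathfrak{E}(t)$, recall that $\mathfrak{E}$ contains $\int_\mathcal{I}\big(\tfrac{g}{2}(\partial_t^j\eta)^2 + \tfrac{\sigma}{2}(\partial_t^j\eta')^2/(1+(h_s')^2)^{3/2}\big)dx$ summed over $j=0,1,2$, while $\mathcal{E}_\parallel = \sum_j\|\partial_t^j\eta\|_{H^1(\mathcal{I})}^2$. Since $h_s\in C^\infty(\overline{\mathcal{I}})$ by Proposition \ref{sta-exiuni}, the weight $(1+(h_s')^2)^{-3/2}$ is bounded above and below by positive constants on $\overline{\mathcal{I}}$; hence each summand of $\mathfrak{E}$ is comparable to $\tfrac{g}{2}\|\partial_t^j\eta\|_{L^2}^2 + c\|\partial_t^j\eta'\|_{L^2}^2$, which is comparable to $\|\partial_t^j\eta\|_{H^1(\mathcal{I})}^2$ (using $g>0$). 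Summing over $j$ gives the claimed two-sided bound with constants depending only on $g$, $\sigma$, and $h_s$ — notably \emph{no} smallness is needed for this one.

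\textbf{The residual energy bound.} The genuinely nontrivial step is $|\mathfrak{F}(t)|\le \tfrac12\mathfrak{E}(t)$, where $\mathfrak{F} = \sum_{j=0}^2\int_\mathcal{I}\sigma \mathcal{Q}_j(h_s',\eta')\,dx$. Here I would use the explicit formulas for the $\mathcal{Q}_j$ from Proposition \ref{prop-higherenergy} together with the structure of $\mathcal{R}$. For $j=0$: $\mathcal{Q}_0(h_s',\eta') = \int_0^{\eta'}\mathcal{R}(h_s',z)\,dz$, and since $\mathcal{R}(h_s',0)=0$ and $\partial_{z_2}\mathcal{R}(h_s',0)=0$ (from \eqref{R}, the remainder is second order in $z_2$), a Taylor expansion gives $|\mathcal{R}(h_s',z)|\le C|z|^2$ uniformly in $h_s'$ bounded, hence $|\mathcal{Q}_0(h_s',\eta')|\le C|\eta'|^3 \le C\|\eta'\|_{L^\infty}|\eta'|^2$. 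For $j=1,2$ the $\mathcal{Q}_j$ are already quadratic in $\partial_t\eta'$ or $\partial_t^2\eta'$ but carry an extra factor $\partial_{z_2}^k\mathcal{R}(h_s',\eta')$ with $k\ge 1$ (which vanishes at $\eta'=0$ for $k=1$ since $\partial_{z_2}\mathcal{R}(h_s',0)=0$) or an extra factor $\partial_t^2\eta'(\partial_t\eta')^2$; in all cases one extracts a factor that is $O(\|\eta'\|_{L^\infty} + \|\partial_t\eta'\|_{L^\infty})$ times a quantity controlled by $\mathfrak{E}$. Concretely, using the continuous embedding $H^{3/2}(\mathcal{I})\subset C^1(\overline{\mathcal{I}})$ (so $\|\partial_t^j\eta'\|_{L^\infty}\le C\|\partial_t^j\eta\|_{H^{3/2}}$) and the bounds on the derivatives of $\mathcal{R}$ from Lemma \ref{lemma-R}, I would obtain
\[
|\mathfrak{F}(t)| \le C\big(\|\eta\|_{H^{3/2}} + \|\partial_t\eta\|_{H^{3/2}} + \|\partial_t^2\eta\|_{H^{3/2}}\big)\,\mathcal{E}_\parallel(t) \le C\sqrt{\mathcal{E}(t)}\,\mathcal{E}_\parallel(t).
\]
Combining this with the already-established equivalence $\mathcal{E}_\parallel(t)\lesssim \mathfrak{E}(t)$, we get $|\mathfrak{F}(t)|\le C\sqrt{\mathcal{E}(t)}\,\mathfrak{E}(t)$. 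Choosing $\alpha^*$ small enough that $C\sqrt{\alpha^*}\le \tfrac12$ then yields $|\mathfrak{F}(t)|\le \tfrac12\mathfrak{E}(t)$ for all $t\in[0,T]$, and we are done. The main obstacle is organizing the $j=2$ term in $\mathcal{Q}_2$ cleanly, since it has a piece $\partial_t^2\eta'(\partial_t\eta')^2\partial_{z_2}^2\mathcal{R}$ that is cubic in time-derivatives of $\eta'$; here one uses $L^\infty$ control on two of the three factors (coming from $\mathcal{E}$-smallness via the $H^{3/2}$-embedding) to leave behind a single quadratic factor bounded by $\mathfrak{E}$, which is exactly where the smallness hypothesis $\sup_t\mathcal{E}(t)\le\alpha^*$ is consumed.
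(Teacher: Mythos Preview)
Your overall architecture matches the paper's proof: the energy comparison uses only the smoothness of $h_s$, the dissipation comparison uses the $L^\infty$-closeness of $\det(J_\eta)$ and $\Sigma_\eta$ to $1$ and $\mathbb{I}$ respectively, and the residual bound $|\mathfrak{F}|\le \tfrac12\mathfrak{E}$ goes by showing $|\mathfrak{F}|\le C\sqrt{\mathcal{E}}\,\mathcal{E}_\parallel$. That is exactly how the paper proceeds.

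There is, however, a genuine gap in your treatment of $\mathcal{Q}_2$. You invoke the embedding $H^{3/2}(\mathcal{I})\subset C^1(\overline{\mathcal{I}})$ to obtain $L^\infty$-control on $\partial_t\eta'$, but this embedding is the borderline case and \emph{fails}: in one dimension one needs $s>3/2$ for $H^s\subset C^1$. The energy $\mathcal{E}$ contains $\|\eta\|_{H^{3/2+}}^2$ (so $\|\eta'\|_{L^\infty}$ is fine) but only $\|\partial_t\eta\|_{H^{3/2}}^2$, so $\|\partial_t\eta'\|_{L^\infty}$ is \emph{not} controlled by $\sqrt{\mathcal{E}}$. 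Your displayed inequality also slips in $\|\partial_t^2\eta\|_{H^{3/2}}$, which likewise is not dominated by $\sqrt{\mathcal{E}}$ since $\mathcal{E}$ only carries $\|\partial_t^2\eta\|_{H^1}^2$.

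The problematic term is $\int_{\mathcal{I}}|\partial_t^2\eta'|\,|\partial_t\eta'|^2\,dx$ coming from the second piece of $\mathcal{Q}_2$. The paper avoids any $L^\infty$-estimate on $\partial_t\eta'$ by using the H\"older split $L^2\cdot L^4\cdot L^4$ together with the (subcritical) embedding $H^{1/4}(\mathcal{I})\subset L^4(\mathcal{I})$ and interpolation:
\[
\|\partial_t\eta'\|_{L^4}^2 \le C\|\partial_t\eta'\|_{H^{1/4}}^2 \le C\|\partial_t\eta'\|_{L^2}\|\partial_t\eta'\|_{H^{1/2}} \le C\|\partial_t\eta\|_{H^1}\|\partial_t\eta\|_{H^{3/2}},
\]
which yields $\int|\partial_t^2\eta'||\partial_t\eta'|^2\le C\|\partial_t^2\eta\|_{H^1}\|\partial_t\eta\|_{H^1}\|\partial_t\eta\|_{H^{3/2}}\le C\sqrt{\mathcal{E}}\,\mathcal{E}_\parallel$, using exactly the $H^{3/2}$-regularity of $\partial_t\eta$ that $\mathcal{E}$ provides. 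With this single correction, your argument goes through and coincides with the paper's.
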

\begin{proof}
The first chain of inequalities in \eqref{comparison} is directly derived from the fact that the stationary surface $h_s$ is smooth. For the second one, we know from \eqref{est-detJeta} that
\begin{align*}
\|\det(J_\eta)-1\|_{L^\infty(\Omega_s)}\leq C\|\eta\|_{H^{3/2+}(\mathcal{I})}
\end{align*}
and, arguing similarly, also that
\begin{align*}
\|\Sigma_\eta-\mathbb{I}\|_{L^\infty(\Omega_s)}\leq C \|\eta\|_{H^{3/2+}(\mathcal{I})}.
\end{align*}Since $\|\eta(t)\|^2_{H^{3/2+}(\mathcal{I})} \leq  \mathcal{E}(t)$, there exists $\alpha^*>0$ such that, if $\sup_{t\in[0,T]} \mathcal{E}(t)\leq \alpha^*$, then
\begin{equation}\label{est-diff-detSigma}
\|\det(J_\eta)-1\|_{L^\infty(\Omega_s)}\leq \frac{1}{4}, \qquad \|\Sigma_\eta-\mathbb{I}\|_{L^\infty(\Omega_s)}\leq \frac{1}{4}.\end{equation}
Together, these two inequalities yield the second chain of inequalities in \eqref{comparison}.

Finally, we prove the third inequality in \eqref{comparison}. Combining the first bound in \eqref{bounds-R} with the continuous embedding $H^{3/2+}(\mathcal{I})\subset W^{1,\infty}(\mathcal{I})$, we estimate the term with $\mathcal{Q}_0$ by
\begin{equation*}
\begin{aligned}
\int_{\mathcal{I}}\sigma \mathcal{Q}_0(h_s',\eta') dx &\leq C \int_{\mathcal{I}}|\eta'|^3dx\leq C\|\eta'\|_{L^\infty(\mathcal{I})}\|\eta'\|^2_{L^2(\mathcal{I})} \\[5pt]&\leq C\|\eta\|_{H^{3/2+}(\mathcal{I})}\|\eta'\|^2_{L^2(\mathcal{I})}  \leq C \sqrt{\mathcal{E}}\mathcal{E}_{\parallel}.
\end{aligned}
\end{equation*}
Analogously, other bounds in \eqref{bounds-R} yield
\begin{align*}
    &\int_{\mathcal{I}} \mathcal{Q}_1(h_s,\eta')dx\leq C \int_{\mathcal{I}}|\eta'||\pa_t\eta'|^2 dx\leq C\sqrt{\mathcal{E}}\mathcal{E}_{\parallel},\\[5pt]
    &\int_{\mathcal{I}} \mathcal{Q}_2(h_s,\eta')dx\leq C \int_{\mathcal{I}}\left(|\eta'| |\pa^2_t\eta'|^2+|\pa^2_t\eta'||\pa_t\eta'|^2\right) dx\leq C\sqrt{\mathcal{E}}\mathcal{E}_{\parallel}.
\end{align*}
Note that in the last inequality we have used the continuous embedding $H^{1/4}(\mathcal{I})\subset L^4(\mathcal{I})$ together with an interpolation inequality to obtain the bound
\begin{align*}
   \|\pa_t\eta'\|_{L^4(\mathcal{I})}^2 &\leq C \|\pa_t\eta'\|^2_{H^{1/4}(\mathcal{I})}\leq C\|\pa_t\eta'\|_{L^2(\mathcal{I})}\|\pa_t\eta'\|_{H^{1/2}(\mathcal{I})}\\[5pt]&\leq C \|\pa_t\eta\|_{H^1(\mathcal{I})}\|\pa_t \eta\|_{H^{3/2}(\mathcal{I})}.
\end{align*}

Gathering all estimates and considering a possibly smaller $\alpha^*$ yields the third inequality in \eqref{comparison}.
\end{proof}
We now state the main results of the paper, which consist of a global-in-time higher-order bound and a decay estimate.
\begin{theorem}\label{mainTh}Assume that $\eta$ have zero mean over $\mathcal{I}$. There exists a constant $\widetilde{\alpha}>0$ such that, if
\begin{equation}
\begin{aligned}\label{small-hyp}
\sup_{t\in [0,T]}\mathcal{E}(t)+\int_{0}^T \mathcal{D}(t)dt \leq \widetilde{\alpha},
\end{aligned}
\end{equation}
 then 
\begin{align*}
\sup_{t\in [0,T]}\mathcal{E}(t)+\int_{0}^T \mathcal{D}(t)dt\leq C \mathcal{E}(0).
\end{align*}In addition, there exists a constant $\lambda>0$ such that
\begin{equation*}
 \sup_{t\in[0,T]} \Big(\mathcal{E}_\parallel (t) + \int_{\Omega_s}|\nabla \Phi(t)|^2 + (\partial_t \eta)^2(t,-1) + (\partial_t \eta)^2(t,1)\Big) \leq C \mathcal{E}_\parallel(0)e^{-\lambda T}.\\\vspace{0.5em}
\end{equation*}
\end{theorem}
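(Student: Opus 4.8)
The plan is to combine the energy--dissipation inequality \eqref{open} with the comparison Lemma~\ref{lem-comparison} and the auxiliary estimates of Sections~\ref{sec-adddiss}--\ref{sec-ellest} to produce a closed differential inequality of the form $\tfrac{d}{dt}\mathcal{E}+c\,\mathcal{D}\le 0$, which is then integrated in time; the exponential decay will follow from the fact that, up to the small nonlinear corrections, the dissipation dominates the energy.

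\textbf{Reduction to the basic level.} First I fix $\widetilde{\alpha}$ small enough that \eqref{small-hyp} forces both the smallness hypothesis of Lemma~\ref{lem-comparison} and $\|\eta(t)\|_{H^{3/2+}(\mathcal{I})}<\alpha$ on $[0,T]$, so that all the propositions of Sections~\ref{sec-adddiss}--\ref{sec-NL} apply. By Lemma~\ref{lem-comparison} one may replace, up to fixed constants, $\mathfrak{E}+\mathfrak{F}$ by $\mathcal{E}_\parallel$ and $\mathfrak{D}$ by $\mathcal{D}_\parallel$. The crucial observation is that the improved dissipation is controlled by the basic one: Proposition~\ref{prop-controlL2} (the $L^2$-bounds on $\partial_t^j\Phi$), Proposition~\ref{prop-controletaj} (the $H^{5/2}$-bounds on $\partial_t^j\eta$) and Proposition~\ref{H2PhiD} (the $\mathring{H}^2$-bound on $\partial_t\Phi$) together give $\mathcal{D}\le C(\mathcal{D}_\parallel+\mathcal{E}\,\mathcal{D})$, hence $\mathcal{D}\le C_0\,\mathcal{D}_\parallel$ for $\widetilde{\alpha}$ small. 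Inserting this together with $|\mathcal{S}_1|+|\mathcal{S}_2|\le C(\sqrt{\mathcal{E}}+\mathcal{E})\,\mathcal{D}$ (Propositions~\ref{S1-bound}--\ref{S2-bound}) into \eqref{open} and absorbing the right-hand side for $\widetilde{\alpha}$ small yields
\begin{equation*}
\frac{d}{dt}\bigl(\mathfrak{E}+\mathfrak{F}\bigr)+c\,\mathcal{D}_\parallel\le 0 .
\end{equation*}

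\textbf{From the basic to the improved energy.} Writing $\mathcal{E}=\mathcal{E}_\parallel+\mathcal{E}_\perp$ with $\mathcal{E}_\perp:=\|\eta\|_{H^{3/2+}(\mathcal{I})}^2+\|\partial_t\eta\|_{H^{3/2}(\mathcal{I})}^2$, and noting that $3/2+\delta\le 5/2$ for $0<\delta\le1$, Cauchy--Schwarz together with the fact that $\mathcal{D}$ contains $\sum_{j=0}^2\|\partial_t^j\eta\|_{H^{5/2}(\mathcal{I})}^2$ gives $\tfrac{d}{dt}\mathcal{E}_\perp\le C\,\mathcal{D}$. Hence, for a sufficiently large fixed $K$, the quantity $\widetilde{\mathcal{E}}:=K(\mathfrak{E}+\mathfrak{F})+\mathcal{E}_\perp$ is comparable to $\mathcal{E}$ and, combining the inequality of the previous step with $\tfrac{d}{dt}\mathcal{E}_\perp\le C\mathcal{D}$ and $\mathcal{D}\le C_0\mathcal{D}_\parallel$, it satisfies $\tfrac{d}{dt}\widetilde{\mathcal{E}}+c'\,\mathcal{D}\le 0$ for some $c'>0$. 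Integrating on $[0,t]$ and using $\widetilde{\mathcal{E}}\simeq\mathcal{E}$ gives the first assertion $\mathcal{E}(t)+\int_0^t\mathcal{D}(s)\,ds\le C\,\mathcal{E}(0)$.

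\textbf{Exponential decay and the main obstacle.} For the second assertion I return to the inequality of the first step; Proposition~\ref{prop-controletaj} (again with $\mathcal{D}\le C_0\mathcal{D}_\parallel$) gives $\mathcal{E}_\parallel\le C\mathcal{D}_\parallel$, while Lemma~\ref{lem-comparison} gives $\mathfrak{E}+\mathfrak{F}\le C\mathcal{E}_\parallel$, so that $\mathfrak{E}+\mathfrak{F}\le C\mathcal{D}_\parallel$ and
\begin{equation*}
\frac{d}{dt}\bigl(\mathfrak{E}+\mathfrak{F}\bigr)+\lambda\bigl(\mathfrak{E}+\mathfrak{F}\bigr)\le 0
\end{equation*}
for some $\lambda>0$; Grönwall then yields $(\mathfrak{E}+\mathfrak{F})(t)\le(\mathfrak{E}+\mathfrak{F})(0)e^{-\lambda t}$, hence $\mathcal{E}_\parallel(t)\le C\mathcal{E}_\parallel(0)e^{-\lambda t}$, and finally Proposition~\ref{H2Phi} bounds $\int_{\Omega_s}|\nabla\Phi|^2(t)\le\|\Phi(t)\|_{\mathring{H}^2(\Omega_s)}^2\le C\mathcal{E}_\parallel(t)$ while $H^1(\mathcal{I})\hookrightarrow L^\infty(\mathcal{I})$ bounds $(\partial_t\eta)^2(t,\pm1)\le C\mathcal{E}_\parallel(t)$, giving the stated decay. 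The only genuinely delicate point is the inequality $\mathcal{D}\le C(\mathcal{D}_\parallel+\mathcal{E}\,\mathcal{D})$ used throughout: it is exactly the parabolic gain of regularity encoded in Propositions~\ref{prop-controlL2}, \ref{prop-controletaj}, \ref{H2Phi} and \ref{H2PhiD}, and it rests on the Neumann elliptic theory in the curvilinear corner domain (Theorem~\ref{theo-reg-neu}) and on the homogeneous trace inequality (Proposition~\ref{prop-homotrace}). Once this gain is available, every term of $\mathcal{E}$ sits at or below the $H^{5/2}$/$H^2$ level already present in $\mathcal{D}$, and both the uniform bound and the exponential convergence follow at once.
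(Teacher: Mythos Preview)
Your argument is correct and follows essentially the same route as the paper: the key inequality $\mathcal{D}\le C(\mathcal{D}_\parallel+\mathcal{E}\mathcal{D})$, the comparison lemma, the bound $\tfrac{d}{dt}\mathcal{E}_\perp\le C\mathcal{D}$, and the Grönwall step for decay all match. The only tactical differences are that you package everything into a single Lyapunov functional $\widetilde{\mathcal{E}}=K(\mathfrak{E}+\mathfrak{F})+\mathcal{E}_\perp$ and run a differential inequality, whereas the paper integrates the $\mathcal{E}_\parallel$ and $\mathcal{E}_\perp$ parts separately and then adds, and that for the final decay of $\|\nabla\Phi\|_{L^2}^2$ and $(\partial_t\eta)^2(t,\pm1)$ you invoke Proposition~\ref{H2Phi} and the Sobolev embedding while the paper reads these bounds off the zeroth-order energy--dissipation equality~\eqref{ene-eq-per} directly; both variants are equally valid.
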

\begin{proof}Define $\widetilde{\alpha}=\min(\alpha, \alpha^*)$ with $\alpha$ and $\alpha^*$ as in Proposition \ref{prop-reg-trans} and Lemma \ref{lem-comparison}.
After integrating \eqref{open} over $(0,t)$, we obtain that
\begin{align*}
\mathfrak{E}(t)+\mathfrak{F}(t)+\int_{0}^t \mathfrak{D}(s)ds\leq C \Big( \mathfrak{E}(0)+\int_0^t \big(\sqrt{\mathcal{E}(s)}+ \mathcal{E}(s)\big)\mathcal{D}(s)ds\Big),
\end{align*}
and, due to \eqref{comparison}, we get
\begin{equation}\label{est_Eparallel}
\begin{aligned}
    \mathcal{E}_{\parallel}(t)+\int_0^t \mathcal{D}_{\parallel}(s)ds\leq C\Big( \mathcal{E}_\parallel(0)+\int_0^t \big(\sqrt{\mathcal{E}(s)} + \mathcal{E}(s)\big)\mathcal{D}(s)ds\Big).
\end{aligned}
\end{equation}
Recalling from \eqref{tot-diss} that the additional dissipation is given by
\begin{align*}
    \mathcal{D}_\perp(t)\!= \! \sum_{j=0}^{1} \| \partial_t^j\eta(t)\|^2_{H^{5/2} (\mathcal{I})}\!+\! \| \partial_t^2\eta(t)\|^2_{H^{3/2+}(\mathcal{I})}\!+ \!\sum_{j=0}^{2}\| \partial_t^j \Phi(t)\|^2_{L^2(\Omega_s)} \!+\!\|\partial_t\Phi(t)\|^2_{\mathring{H}^2(\Omega_s)},
\end{align*}
 a consequence of Propositions \ref{prop-controlL2}, \ref{prop-controletaj}, \ref{H2PhiD} and \eqref{small-hyp} is that \begin{equation}\label{bound-D}
    \mathcal{D}_\perp(t)\leq C \left(\mathcal{D}_{\parallel}(t)+ \mathcal{E}(t)\mathcal{D}(t)\right).
\end{equation}After combining \eqref{bound-D} with \eqref{est_Eparallel} and \eqref{small-hyp}, up to taking a smaller $\widetilde{\alpha}$, we infer that there exists $C=C(\widetilde{\alpha})>0$ such that
\begin{equation}\label{bound-Eparallel}
\begin{aligned}
    \mathcal{E}_{\parallel}(t)+\int_0^t \mathcal{D}(s)ds\leq C\mathcal{E}_\parallel(0).
\end{aligned}
\end{equation}
We know from \eqref{impro-ene} that the additional energy is given by
\begin{align*}
    \mathcal{E}_\perp(t)= \|\eta(t)\|^2_{H^{3/2+}(\mathcal{I})} + \|\partial_t \eta(t)\|^2_{H^{3/2+}(\mathcal{I})}.
\end{align*}
As mentioned in Section \ref{sec-enediss}, exploiting the fact that $H^{3/2+}(\mathcal{I})$ are Hilbert spaces and using Cauchy-Schwarz's inequality, we find that
\begin{align*}
\frac{d}{dt}\mathcal{E}_\perp(t)=& \ 2\left(\eta(t),\pa_t\eta(t)\right)_{H^{3/2+}(\mathcal{I})}+2\left(\pa_t\eta(t), \pa_t^2\eta(t)\right)_{H^{3/2+}(\mathcal{I})}
\\[5pt]\leq  & \ \|\eta(t)\|^2_{H^{3/2+}(\mathcal{I})}+2\|\pa_t\eta(t)\|^2_{H^{3/2+}(\mathcal{I})}+\|\pa_t^2\eta(t)\|^2_{H^{3/2+}(\mathcal{I})}
\leq 2\mathcal{D}
\end{align*}
and integrating the previous inequality over $(0,t)$ yields
\begin{equation}\label{bound-Eperp}
\begin{aligned}
\mathcal{E}_{\perp}(t)\leq \mathcal{E}_{\perp}(0) + 2\int_0^t\mathcal{D}(s)ds.
\end{aligned}
\end{equation}
Gathering \eqref{bound-Eparallel}-\eqref{bound-Eperp} together gives the desired higher-order a priori estimate, namely, 
\begin{align*}
   \mathcal{E}(t)+\int_0^t\mathcal{D}(s)ds\leq C \mathcal{E}(0).
\end{align*}
We now derive the decay estimate. Combining \eqref{open} with \eqref{comparison}, \eqref{bound-D} and the smallness condition \eqref{small-hyp} yields
\begin{equation*}
    \frac{d}{dt}\left(\mathfrak{E}(t) + \mathfrak{F}(t)\right) + C\mathcal{D}(t) \leq 0 .
\end{equation*}By definition, we directly have that $\mathfrak{E}\leq C \mathcal{D}$ and since the third inequality in \eqref{comparison} implies\begin{equation}\label{compa-E+F}\frac{1}{2}\mathfrak{E}(t)\leq \mathfrak{E}(t) + \mathfrak{F}(t)\leq \frac{3}{2} \mathfrak{E}(t),\end{equation} 
we infer that there exists a  constant $\lambda>0$ such that
\begin{equation*}
     \frac{d}{dt}\left(\mathfrak{E}(t) + \mathfrak{F}(t)\right) + \lambda\left(\mathfrak{E}(t) + \mathfrak{F}(t)\right) \leq 0 .
\end{equation*}After integrating this differential inequality in time and using again \eqref{comparison} and the first chain of inequalities in \eqref{comparison}, we end up with the decay estimate
\begin{equation}\label{est-decayEpar}
    \mathcal{E}_\parallel (t) \leq C \mathcal{E}_\parallel(0) e^{-\lambda t}.
\end{equation}
In addition, we know from the energy-dissipation equality \eqref{ene-eq-per} that
\begin{equation*}\begin{aligned}
				 &\int_{\Omega_s} \det(J_\eta)|\Sigma_\eta\nabla \Phi|^2 + (\partial_t \eta)^2 (t,-1) + (\partial_t \eta)^2 (t,1)\\[5pt]& = -\int_\mathcal{I} \Big(g \eta \partial_t \eta + \sigma\frac{\eta' \partial_t\eta'}{(1+ (h'_s)^2)^{3/2}}  + \sigma \mathcal{R}(h'_s, \eta')\partial_t \eta'\Big) dx.
                \end{aligned}
		\end{equation*}Therefore, using the uniform bounds in Lemma \ref{lemma-R} and the continuous embedding $H^{3/2+}(\mathcal{I})\subset W^{1, \infty}(\mathcal{I})$, we get the estimate
\begin{equation}\label{est-D-Epar}\begin{aligned}
				 &\int_{\Omega_s} \det(J_\eta)|\Sigma_\eta\nabla \Phi|^2 + (\partial_t \eta)^2 (t,-1) + (\partial_t \eta)^2 (t,1)\\[5pt]& \leq  C \left( 1+\|\eta'\|_{L^\infty(\mathcal{I})}\right)\|\eta\|_{H^1(\mathcal{I})}\|\partial_t \eta\|_{H^1(\mathcal{I})}\leq C \big(1+\sqrt{\mathcal{E}}\big)\mathcal{E}_\parallel\leq C\mathcal{E}_\parallel,
                \end{aligned}
\end{equation}where the last inequality follows from \eqref{small-hyp}. Gathering \eqref{est-decayEpar}-\eqref{est-D-Epar} together and employing \eqref{est-diff-detSigma}, we conclude that
\begin{equation*}
   \mathcal{E}_\parallel (t) + \|\nabla \Phi(t)\|^2_{L^2(\Omega_s)} + (\partial_t \eta)^2(t,-1) + (\partial_t \eta)^2(t,1) \leq C \mathcal{E}_\parallel(0)e^{-\lambda t}.
\end{equation*}
\end{proof}

\subsection*{Acknowledgments}
The authors would like to thank the anonymous referees for the helpful comments that improved the paper.
EB was partially supported by the
Horizon Europe EU Research and Innovation Programme through the Marie Sklodowska-Curie THANAFSI Project No. 101109475 and by the Gruppo Nazionale
per l’Analisi Matematica, la Probabilità e le loro Applicazioni (GNAMPA) of the
Istituto Nazionale di Alta Matematica (INdAM). EB acknowledges support from DMAT, funded by MUR through the 2023-2027 Excellence Department Grant.
EB and FG were partially supported by the Junta de Andaluc\'ia through the grant P20-00566.
AC acknowledges financial support from  2023
Leonardo Grant for Researchers and Cultural Creators, BBVA Foundation. The BBVA Foundation
accepts no responsibility for the opinions, statements, and contents included in the project and/or the results thereof, which are entirely the responsibility of the authors. AC was partially supported by the Severo Ochoa Programme for Centers of Excellence Grant CEX2019-000904-S and CEX-2023-001347-S funded by MCIN/AEI/10.13039/501100011033. AC and FG were partially supported by the MICINN through the grant PID2020-114703GB-I00. FG was partially supported by the MICINN through the grants EUR2020-112271 and PID2022-140494N, by the Fundaci\'on de Investigaci\'on de la Universidad de Sevilla through the
grant FIUS23/0207, and by MINECO grant  RED2022-134784-T funded by MCIN/AEI/10.13039/50110001103 (Spain). FG acknowledges support
from IMAG, funded by MICINN through the Maria de Maeztu Excellence Grant CEX2020-001105-M/AEI/10.13039/501100011033. FG was partially supported by the Institute of Advanced Study and the Harish-Chandra Fund.

\subsection*{Conflicts of Interest} We confirm that we do not have any conflict of interest.

\subsection*{Data Availability} The manuscript has no associated data.

		 \appendix

		 \section{Technical estimates}\label{appendix}
		
		 \begin{lemma}\label{lemma-prodest}
Let $D\subset\mathbb{R}^2$ be a bounded Lipschitz domain and $F\in H^{1}(D)$, $G\in H^{1+}(D)$ be scalar functions. Then,  $FG \in H^1(D)$ and there exists $C=C(D)>0$ such that
\begin{equation}\label{prod-estH1}
\|FG\|_{H^{1}(D)}\leq C\|F\|_{H^{1}(D)}\|G\|_{H^{1+}(D)}.
\end{equation}
Let $\gamma\subset \partial D$ and $f \in H^{1/2}(\gamma)$, $g \in H^{1/2+}(\gamma)$ be scalar functions. Then, $fg\in H^{1/2}(\gamma)$ and there exists $C=C(D)>0$ such that
\begin{equation}\label{prod-estH12}
	\|fg\|_{H^{1/2}(\gamma)}\leq C\|f\|_{H^{1/2}(\gamma)}\|g\|_{H^{1/2+}(\gamma)}.
\end{equation}
		 \end{lemma}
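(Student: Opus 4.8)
\textit{Proof idea.} The plan is to deduce both inequalities from the classical bilinear product estimate on the whole space,
\[
\|uv\|_{H^{s_0}(\mathbb{R}^d)}\le C\,\|u\|_{H^{s_1}(\mathbb{R}^d)}\,\|v\|_{H^{s_2}(\mathbb{R}^d)},
\]
valid whenever $s_0\le\min\{s_1,s_2\}$, $s_1+s_2\ge0$ and $s_1+s_2-s_0>d/2$, which follows from a Littlewood--Paley/paraproduct decomposition, or, more elementarily, from Hölder's inequality combined with the Sobolev embeddings $H^{s}(\mathbb{R}^d)\hookrightarrow L^p(\mathbb{R}^d)$. For \eqref{prod-estH1} I would apply it with $d=2$ and $(s_0,s_1,s_2)=(1,1,1+\delta)$, writing $G\in H^{1+}(D)=H^{1+\delta}(D)$ for some $0<\delta\le1$; all hypotheses hold, and the strict inequality $s_1+s_2-s_0=1+\delta>1=d/2$ is exactly where the extra regularity of $G$ is used, since $H^1(\mathbb{R}^2)$ sits at the algebra threshold $s=d/2$ and is not itself a multiplicative algebra. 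For \eqref{prod-estH12}, after the reduction below, I would apply the same estimate with $d=1$ and $(s_0,s_1,s_2)=(1/2,1/2,1/2+\delta)$, the strict inequality $1/2+\delta>1/2=d/2$ again holding precisely because $\delta>0$.

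To transfer \eqref{prod-estH1} from $\mathbb{R}^2$ to the bounded Lipschitz domain $D$, I would fix a bounded extension operator $E\colon H^s(D)\to H^s(\mathbb{R}^2)$, $s\ge0$ (available since $D$ is Lipschitz), and use that $FG=((EF)(EG))|_D$, so that
\[
\|FG\|_{H^1(D)}\le\|(EF)(EG)\|_{H^1(\mathbb{R}^2)}\le C\|EF\|_{H^1(\mathbb{R}^2)}\|EG\|_{H^{1+\delta}(\mathbb{R}^2)}\le C\|F\|_{H^1(D)}\|G\|_{H^{1+}(D)}.
\]

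For \eqref{prod-estH12}, the set $\gamma\subset\partial D$ is, in our applications, a finite union of smooth arcs meeting at finitely many Lipschitz corners. I would cover $\gamma$ by finitely many smooth charts $\kappa_i\colon J_i\to\gamma$ with $J_i\subset\mathbb{R}$ bounded intervals, take a subordinate smooth partition of unity $\{\psi_i\}$ on $\gamma$, and write $fg=\sum_i\psi_i fg$. Since multiplication by the smooth cutoff $\psi_i$ is bounded on $H^{1/2}$ and the $H^{1/2}$-norm is, under $\kappa_i$, comparable to the $H^{1/2}(J_i)$-norm, it suffices to bound $\|\tilde f\tilde g\|_{H^{1/2}(J_i)}$ with $\tilde f=(\psi_if)\circ\kappa_i$ and $\tilde g=g\circ\kappa_i$; extending $\tilde f,\tilde g$ to $\mathbb{R}$ by a bounded extension operator and applying the $d=1$ bilinear estimate of the first paragraph, then summing over $i$, yields \eqref{prod-estH12}. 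Alternatively, one may deduce \eqref{prod-estH12} directly from \eqref{prod-estH1} by lifting $f$ to some $F\in H^1(D)$ and $g$ to some $G\in H^{1+\delta}(D)$ with $\|F\|_{H^1(D)}\lesssim\|f\|_{H^{1/2}(\gamma)}$, $\|G\|_{H^{1+\delta}(D)}\lesssim\|g\|_{H^{1/2+}(\gamma)}$ and traces $f,g$ on $\gamma$, and then taking the trace on $\gamma$ of $FG\in H^1(D)$.

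The computations are entirely routine, so there is no serious obstacle here. The only points requiring a little care are that both $H^1(\mathbb{R}^2)$ and $H^{1/2}(\mathbb{R})$ lie exactly at the critical index $s=d/2$, so one genuinely needs the surplus $\delta$ derivatives on the second factor rather than a naive algebra property, and that for \eqref{prod-estH12} the corners of $\Gamma_w$ must be treated by the localization to smooth charts described above.
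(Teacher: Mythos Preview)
Your proposal is correct. For \eqref{prod-estH12} your ``alternative'' route---lift $f,g$ to $F\in H^1(D)$, $G\in H^{1+\delta}(D)$ via a bounded right inverse of the trace, apply \eqref{prod-estH1}, then take the trace of $FG$---is exactly what the paper does, so there is no real difference there.

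For \eqref{prod-estH1} the paper takes a more elementary route than you do: it works directly on $D$ via the Leibniz rule $\nabla(FG)=(\nabla F)G+F\nabla G$, bounding the first term by $\|\nabla F\|_{L^2}\|G\|_{L^\infty}$ and using $H^{1+\delta}(D)\hookrightarrow L^\infty(D)$, and the second term by H\"older with exponents $(2/\delta,2/(1-\delta))$ together with $H^1(D)\hookrightarrow L^{2/\delta}(D)$ and $H^\delta(D)\hookrightarrow L^{2/(1-\delta)}(D)$. Your route---invoking the general $\mathbb{R}^d$ bilinear estimate (paraproduct or equivalent) and transferring to $D$ by a Stein extension---is certainly valid and has the advantage of being a reusable black box for other Sobolev indices; the paper's argument trades that generality for being entirely self-contained, avoiding any Littlewood--Paley machinery and using only the classical 2D Sobolev embeddings, which keeps the proof at the level of H\"older's inequality.
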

\begin{proof}
First of all, note that $\nabla (FG)= \nabla F G + F \nabla G$. Hölder inequality and the embedding $H^{1+}(D)\subset L^\infty(D)$ imply
\begin{equation*}
\|FG\|_{L^2(D)} + \|\nabla F G\|_{L^2(D)}\leq C \|F\|_{H^1(D)}\|G\|_{L^\infty(D)}\leq  C\|F\|_{H^1(D)}\|G\|_{H^{1+}(D)}
\end{equation*}
where $C=C(D)>0$.
Furthermore, we recall the continuous embeddings $H^{\eps}(D)\subset  L^{2/(1-\eps)}(D)$ for any $\eps\in (0,1)$ and $H^1(D)\subset L^q(D)$ for any $q\in [1, \infty)$. Therefore,  using again Hölder inequality with exponents $(2/\eps, 2/(1-\eps))$ yields
\begin{align*}
\|F \nabla G\|_{L^2(D)}&\leq \| F\|_{L^{2/\eps}(D)} \| \nabla G\|_{L^{2/(1-\eps)}(D)}\\[5pt]&\leq C\|F\|_{H^1(D)} \| \nabla G\|_{H^{\eps}(D)}\leq C \|F\|_{H^1(D)} \| G\|_{H^{1+}(D)}.
\end{align*}Thus, \eqref{prod-estH1} is proved. Let us now introduce the extensions to $D$ of $f$ and $g$, respectively denoted by $\widetilde{f}$ and $\widetilde{g}$. Then, $\widetilde{f}\in H^1(D)$, $\widetilde{g} \in H^{1+}(D)$ and
\begin{equation*}
	\|\widetilde{f}\|_{H^1(D)} \leq C \|f\|_{H^{1/2}(\gamma)}, \qquad	\|\widetilde{g}\|_{H^{1+}(D)} \leq C \|g\|_{H^{1/2+}(\gamma)}.
\end{equation*} The first part of the lemma implies that $\widetilde{f}\widetilde{g}\in H^{1}(D)$ and the product estimate \eqref{prod-estH1} holds. Since $(\widetilde{f}\widetilde{g})_{|_\gamma}= fg$, we conclude that $fg\in H^{1/2}(\gamma)$ and
\begin{equation*}
\|fg\|_{H^{1/2}(\gamma)}\leq C\| \widetilde{f}\widetilde{g}\|_{H^{1}(D)}\leq C \| \widetilde{f}\|_{H^{1}(D)}\| \widetilde{g}\|_{H^{1+}(D)}\leq C \|f\|_{H^{1/2}(\gamma)} \|g\|_{H^{1/2+}(\gamma)},
\end{equation*} which proves \eqref{prod-estH12}.
\end{proof}

Next, we state the uniform bounds for $\mathcal{R}$ in \eqref{R} used throughout the paper, as well as the estimates on the $H^{1/2+}(\mathcal{I})$-norm of its derivatives used in Proposition \ref{prop-controletaj}.

		 \begin{lemma}{\cite[Proposition B.1]{GuoTice2018}}
		 	\label{lemma-R}
Let $\mathcal{R}$ be given by \eqref{R}. Then, there exists a constant $C>0$ such that
\begin{equation}\label{bounds-R}
\begin{aligned}
\Big| \frac{1}{z_2^3}\int_0^{z_2}\mathcal{R}(z_1,s)ds\Big| + \Big|\frac{\mathcal{R}(z_1,z_2)}{z_2^2} \Big| + \Big| \frac{\partial_{z_2}\mathcal{R}(z_1,z_2)}{z_2} \Big| + \Big| \frac{\partial_{z_1}\mathcal{R}(z_1,z_2)}{z_2^2}\Big|&\\[5pt]+ | \partial^2_{z_2}\mathcal{R}(z_1,z_2)|+ \Big|\frac{\partial^2_{z_1}\mathcal{R}(z_1,z_2) }{z_2^2}\Big|
+ \Big|\frac{\partial_{z_1}\partial_{z_2}\mathcal{R}(z_1,z_2) }{z_2}\Big|&
\\[5pt]+ | \partial^3_{z_2}\mathcal{R}(z_1,z_2)| + | \partial^2_{z_2}\partial_{z_1}\mathcal{R}(z_1,z_2)| +\Big| \frac{\partial^2_{z_1}\partial_{z_2}\mathcal{R}(z_1,z_2)}{z_2}\Big|&
\leq& C\\[5pt]
\end{aligned}
\end{equation} for any $(z_1,z_2)\in \mathbb{R}^2$.
In addition, if $\eta\in H^{3/2+}(\mathcal{I})$, we have that
\begin{align}
  & \| \partial_{z_2}\mathcal{R}(h_s', \eta')\|^2_{H^{1/2+}(\mathcal{I})}\leq C \|\eta\|^2_{H^{3/2+}(\mathcal{I})}, \label{est-derR}\\[5pt]
   & \|\partial_{z_2}^2 \mathcal{R}(h_s', \eta')\|^2_{H^{1/2+}(\mathcal{I})}\leq C \big(1 + \|\eta\|^2_{H^{3/2+}(\mathcal{I})}\big),\label{est-der2R}\\[5pt]
     & \| \partial_{z_1}\partial_{z_2}\mathcal{R}(h_s', \eta')\|^2_{H^{1/2+}(\mathcal{I})}\leq C \|\eta\|^2_{H^{3/2+}(\mathcal{I})}. \label{est-der1der2R}
\end{align}

	\end{lemma}	
\begin{proof}
We observe that
\begin{equation*}\mathcal{R}(z_1,z_2)= \frac{z_1 + z_2}{\sqrt{1+(z_1+ z_2)^2}} - \frac{z_1}{\sqrt{1+z_1^2}} -\frac{z_2}{(1+z_1^2)^{3/2}}\end{equation*}
can be also written in the integral form
\begin{align*}
    \mathcal{R}(z_1,z_2)=-z_2^2\int_0^1s\int_0^1(z_1+s\mu z_2)\bigg(\frac{1}{(1+s\mu z_2)^\frac{3}{2}}-\frac{2}{1+z_1^2}\bigg)d\mu ds = z_2^2 \ r(z_1,z_2),
\end{align*} where $|r(z_1,z_2)|\leq C$ for any $(z_1,z_2)\in \mathbb{R}^2$.
Using analogous integral versions for the first-order derivatives of $\mathcal{R}$, we obtain that
\begin{align*}
   \partial_{z_2}\mathcal{R}(z_1,z_2)=z_2 \ r_1(z_1,z_2), \qquad  \partial_{z_2}^2 \mathcal{R}(z_1,z_2)=r_2(z_1,z_2),
\end{align*} where $|r_i(z_1,z_2)|\leq C$ for $i=1,2$ and any $(z_1,z_2)\in \mathbb{R}^2$. The rest of the bounds in \eqref{bounds-R} follows by repeating the same argument.

In order to prove \eqref{est-derR}, we recall that
\begin{align*}
\| \partial_{z_2}\mathcal{R}(h_s', \eta')\|^2_{H^{1/2+}(\mathcal{I})} = \| \partial_{z_2}\mathcal{R}(h_s', \eta')\|^2_{L^2(\mathcal{I})} +\|\partial_{z_2}\mathcal{R}(h_s', \eta')\|^2_{\dot{H}^{1/2+}(\mathcal{I})}, \end{align*}
where
\begin{align*}\|\partial_{z_2}\mathcal{R}(h_s', \eta')\|^2_{\dot{H}^{1/2+}(\mathcal{I})}=\int_{\mathcal{I}}\int_{\mathcal{I}}\frac{|\partial_{z_2}\mathcal{R}(h_s'(x), \eta'(x)) - \partial_{z_2}\mathcal{R}(h_s'(x'), \eta'(x'))|^2}{|x-x'|^{2+ 2\delta}}dx'dx
\end{align*}for $0<\delta<1/2$.
After writing
\begin{align*}
    &\partial_{z_2}\mathcal{R}(h_s'(x), \eta'(x))\! -\! \partial_{z_2}\mathcal{R}(h_s'(x'), \eta'(x'))\!  \\[5pt]&\!=\!\partial_{z_2}\mathcal{R}(h_s'(x), \eta'(x)) \!- \!\partial_{z_2}\mathcal{R}(h_s'(x), \eta'(x')) \!+\! \partial_{z_2}\mathcal{R}(h_s'(x), \eta'(x'))\!-\!\partial_{z_2}\mathcal{R}(h_s'(x'), \eta'(x')),
\end{align*}
Mean-Value Theorem yields that 
\begin{align*}
   & |\partial_{z_2}\mathcal{R}(h_s'(x), \eta'(x))\! - \!\partial_{z_2}\mathcal{R}(h_s'(x), \eta'(x'))| \leq  \|\partial^2_{z_2} \mathcal{R}(h'_s(x), \cdot )\|_{L^\infty(\mathbb{R})}|\eta'(x)- \eta'(x')|\\[5pt]&
   |\partial_{z_2}\mathcal{R}(h_s'(x), \eta'(x'))\!-\!\partial_{z_2}\mathcal{R}(h_s'(x'), \eta'(x'))|\! \leq \! \|\partial_{z_1}\partial_{z_2} \mathcal{R}(\cdot, \eta'(x') )\|_{L^\infty(\mathbb{R})}|h'_s(x)\!- \!h'_s(x')|\end{align*}
Since we know from \eqref{bounds-R} that
\begin{equation*}
     |\partial^2_{z_2} \mathcal{R}(z_1,z_2)|  + \left|\frac{\partial_{z_1}\partial_{z_2}\mathcal{R}(z_1, z_2)}{z_2}\right|\leq C  \quad \forall (z_1,z_2)\in \mathbb{R}^2,
\end{equation*}therefore implying
$$\|\partial_{z_2}^2 \mathcal{R}\mathcal(h'_s(x), \cdot)\|_{L^\infty(\mathbb{R})}\leq C, \quad \|\partial_{z_1}\partial_{z_2} \mathcal{R}(\cdot, \eta'(y))\|_{L^\infty(\mathbb{R})}\leq C |\eta'(y)|,$$
it follows that 
\begin{align*}
 \|\partial_{z_2}\mathcal{R}(h_s', \eta')\|^2_{\dot{H}^{1/2+}(\mathcal{I})} \! &\leq \!C \!\int_{\mathcal{I}}\int_{\mathcal{I}}  \Big( \frac{|\eta'(x)\!-\! \eta'(x')|^2}{|x-x'|^{2+2\delta}} \!+ \!\frac{|\eta'(x')|^2 |h'_s(x)\!-\! h'_s(x')|^2}{|x-x'|^{2+2\delta}}\Big)dx'dx
 \\[5pt]
 &\leq C \big(\|\eta'\|^2_{\dot{H}^{1/2+}(\mathcal{I})} + \|\eta'\|^2_{L^\infty(\mathcal{I})}\|h'_s\|^2_{\dot{H}^{1/2+}(\mathcal{I})}\big).
\end{align*}
Thanks to the continuous embedding $H^{1/2+}(\mathcal{I}) \subset L^\infty(\mathcal{I})$ and the smoothness of $h_s$, we deduce that 
\begin{equation}\label{bound-seminorm}
    \|\partial_{z_2}\mathcal{R}(h_s', \eta')\|^2_{\dot{H}^{1/2+}(\mathcal{I})} \leq C\|\eta'\|^2_{H^{1/2+}(\mathcal{I})}. 
\end{equation}
To conclude, we still need to control the $L^2$-norm. Again, we know from \eqref{bounds-R} that 
$$ \Big|\frac{\partial_{z_2}\mathcal{R}(z_1, z_2)}{z_2}\Big|\leq C  \quad \forall (z_1,z_2)\in \mathbb{R}^2,$$
therefore implying that
\begin{equation}\label{bound-L2norm}
    \|\partial_{z_2}\mathcal{R}(h_s', \eta')\|^2_{L^2(\mathcal{I})} = \int_{\mathcal{I}} |\partial_{z_2}\mathcal{R}(h_s'(x), \eta'(x))|^2 dx \leq C \int_{\mathcal{I}} |\eta'(x)|^2 dx.
\end{equation}
Gathering \eqref{bound-seminorm}-\eqref{bound-L2norm} together, we then obtain that
\begin{equation*}
   \| \partial_{z_2}\mathcal{R}(h_s', \eta')\|^2_{H^{1/2+}(\mathcal{I})}\leq C \|\eta'\|^2_{H^{1/2+}(\mathcal{I})}\leq C \|\eta\|^2_{H^{3/2+}(\mathcal{I})}.
\end{equation*}
 The remaining estimates \eqref{est-der2R}-\eqref{est-der1der2R} are obtained by arguing in the same fashion and using the fact that
\begin{equation*}
     |\partial^3_{z_2} \mathcal{R}(z_1,z_2)|  + \left|{\partial_{z_2}^2\partial_{z_1}\mathcal{R}(z_1, z_2)}\right| + \Big| \frac{\partial^2_{z_1}\partial_{z_2}\mathcal{R}(z_1,z_2)}{z_2}\Big|\leq C  \quad \forall (z_1,z_2)\in \mathbb{R}^2.
\end{equation*}

\end{proof}

\bibliography{referencesUniv03}

\end{document}